\newtheorem{Theorem}{Theorem}[section]
\newtheorem{Lemma}[Theorem]{Lemma}
\newtheorem{Cor}[Theorem]{Corollary}
\newtheorem{Rem}[Theorem]{Remark}
\newenvironment{Beweis}[1][1]{\vspace{1ex}\noindent{\it Proof of #1:}}
	{\hfill\qed\vspace{2ex}}
\def\sfA{\mathsf{A}}
\def\sfZ{\mathsf{Z}}
\def\sfu{\mathsf{u}}
\def\sfv{\mathsf{v}}
\def\Ext{\mathsf{Ext}}
\def\Surv{\mathsf{Surv}}
\def\N{\mathbb{N}}
\def\V{\mathbb{V}}
\renewcommand{\S}{\ensuremath{\mathbb{S}}}
\def\R{\mathbb{R}}
\def\P{\mathbb{P}}
\def\E{\mathbb{E}}
\def\T{\mathbb{T}}
\def\calT{\mathcal{T}}
\def\calZ{\mathcal{Z}}
\def\calF{\mathcal{F}}
\def\calP{\mathcal{P}}
\def\calG{\mathcal{G}}
\def\calL{\mathcal{L}}
\def\calU{\mathcal{U}}
\def\calN{\mathcal{N}}
\def\calS{\mathcal{S}}
\def\1{\mathds{1}}
\def\BT{\emph{\textbf{BT}}}
\def\BThat{\widehat\BT}
\def\eps{\varepsilon}
\def\filtration{\left(\calF_{n}\right)_{n\ge 0}}
\def\calTproc{(\calT_{n})_{n\ge 0}}
\def\calTprocStar{(\calT_{n}^{*})_{n\ge 0}}
\def\calZproc{(\calZ_{n})_{n\ge 0}}
\def\Xfam{(X^{(\bullet,k)}_{i,\sfv})_{i,k\ge 1,\sf\sfv\in\V}}
\def\Tfam{(N_{\sfv})_{\sf\sfv\in\V}}
\def\sfZfam{(\sfZ_\sfv)_{\sf\sfv\in\V}}
\def\Wproc{(W_{n})_{n\ge 0}}
\def\assBPRE{(Z_{n}')_{n\ge 0}}
\def\assBPREI{(\wh{Z}_{n}')_{n\ge 0}}
\def\assBPREIes{(\Delta_{n})_{n\ge 0}}
\def\Zspineproc{(\wh{Z}_{\wh{V}_{n}})_{n\ge 0}}
\def\ZspineprocBPREI{(\wh{Z}_{\wh{V}_{n}}-1)_{n\ge 0}}
\newcommand{\Zspine}[1]{\wh{Z}_{\wh{V}_{#1}}}
\def\wh{\widehat}
\begin{document}

\title*{Branching within branching II: Limit theorems}
\titlerunning{Branching within branching II: Limit theorems}
\author{Gerold Alsmeyer and S\"oren Gr\"ottrup}
\institute{Gerold Alsmeyer and S\"oren Gr\"ottrup\at Inst.~Math.~Statistics, Department
of Mathematics and Computer Science, University of M\"unster,
Orl\'eans-Ring 10, D-48149 M\"unster, Germany.\at
\email{gerolda@math.uni-muenster.de, soeren.groettrup@gmail.com}}

\maketitle

\abstract{This continues work started in \cite{AlsGroettrup:15a} on a general branching-within-branching model for host-parasite co-evolution. Here we focus on asymptotic results for relevant processes in the case when parasites survive. In particular, limit theorems for the processes of contaminated cells and of parasites are established by using martingale theory and the technique of size-biasing. The results for both processes are of Kesten-Stigum type by including equivalent integrability conditions for the martingale limits to be positive with positive probability. The case when these conditions fail is also studied. For the process of contaminated cells, we show that a proper Heyde-Seneta norming exists such that the limit is nondegenerate.}

\bigskip

{\noindent \textbf{AMS 2000 subject classifications:}
60J80 \ }

{\noindent \textbf{Keywords:} Host-parasite co-evolution, branching within branching, Galton-Watson process, random environment, immigration, infinite random cell line, random tree, size-biasing, Heyde-Seneta norming}

\section{Introduction}

We start with a brief review of a general branching-within-branching model for the evolution of a population of cells containing proliferating parasites. The model has been introduced in the companion paper \cite{AlsGroettrup:15a}, and we refer to this paper for a more detailed introduction of the basic \emph{branching within branching process (BwBP)} (see \eqref{def:BwBP} below), its relation to other branching models and an account of relevant literature.

\vspace{.1cm}
Given the infinite Ulam-Harris tree $\V$ with root $\varnothing$, let the cell tree be formed by the subfamily $(N_{\sfv})_{\sf\sfv\in\V}$ of independent $\N_{0}$-valued random variables with common law $(p_{k})_{k\ge 0}$ and finite mean $\nu$, which is a standard \emph{Galton-Watson tree (GWT)}, viz. $\T=\bigcup_{n\in\N_{0}}\T_{n}$ with $\T_{0}=\{\varnothing\}$ and
\begin{equation*}
\T_{n} := \{\sfv_{1}\dots\sfv_{n}\in\V|\sfv_{1}\dots\sfv_{n-1}\in\T_{n-1}\ \text{and}\ 1\le\sfv_{n}\le N_{\sfv_{1}\dots\sfv_{n-1}}\}
\end{equation*}
$($using the common tree notation $\sfv_{1}...\sfv_{n}$ for $(\sfv_{1},...,\sfv_{n}))$. Put $\calT_{n} := \#\T_{n}$ for $n\in\N_{0}$. Further, let $Z_{\sfv}$ denote the number of parasites in cell $\sf\sfv\in\V$ and $\T_{n}^{*}\ [\calT_{n}^{*}]$ the set [number] of contaminated cells in generation $n$, i.e.
\begin{equation}\label{Eq.contCell}
 \T_{n}^{*} := \{\sf\sfv\in \T_{n}:Z_{\sfv}>0\}\quad\text{and}\quad\calT_{n}^{*}:=\#\T_{n}^{*}
\end{equation} 
for $n\in\N_{0}$. Over all generations, parasites sitting in different cells are assumed to multiply and share offspring into daughter cells in an iid manner. For parasites sitting in the same cell, however, the reproduction and sharing is conditionally iid when given the number of daughter cells. Formally, we let for each $k\in\N$
\begin{equation*}
X^{(\bullet,k)}_{i,\sfv}\ :=\ \left(X^{(1,k)}_{i,\sfv},\dots,X^{(k,k)}_{i,\sfv}\right),\qquad i\in\N,\,\sf\sfv\in\V,
\end{equation*}
be iid copies of the $\N_{0}^{k}$-valued random vector $X^{(\bullet,k)}:=\left(X^{(1,k)},\dots,X^{(k,k)}\right)$ and interpret $X_{i,\sfv}^{(j,k)}$ as the number of offspring of the $i^{\,th}$ parasite in cell $\sfv$ which is shared into daugher cell $\sfv j$ given that cell $\sfv$ has $k$ daughter cells. Assuming that initially there is one parasite sitting in $\varnothing$, the numbers $Z_{\sfv}$ of parasites in cell $\sfv$ are recursively determined by $Z_{\varnothing}:=1$ and
\begin{equation}\label{Eq.parasitesIncell}
Z_{\sfv j}\ =\ \sum_{k\ge j}\1_{\{N_{\sfv}=k\}}\sum_{i=1}^{Z_{\sfv}}X^{(j,k)}_{i,\sfv}\ =\ \sum_{i=1}^{Z_{\sfv}}X^{(j,N_{\sfv})}_{i,\sfv},\quad\sf\sfv\in\V,\, j\in\N,
\end{equation}
where $X^{(j,k)}_{i,\sfv}:=0$ is stipulated whenever $j>k$. Based on these variables, the pair 
\begin{equation}\label{def:BwBP}
\left(\T_{n},(Z_{\sfv})_{\sf\sfv\in\T_{n}}\right)_{n\ge 0}
\end{equation}
is our \emph{branching within branching process (BwBP)}. The \emph{process of parasites} is defined by
\begin{equation*}
 \calZ_{n}\ :=\ \sum_{\sfv\in \T_{n}}Z_{\sfv},\quad n\in\N_{0},
\end{equation*}
and we further put
\begin{equation*}
\gamma\ :=\ \E\calZ_{1}\quad\text{and}\quad \mu_{l,k}\ :=\ \E X^{(l,k)}\quad\text{for $1\le l\le k$}.
\end{equation*}
It will be a standing assumption throughout that the considered population \emph{starts with a single cell containing a single parasite}. Nevertheless, it will sometimes be necessary to condition on a general number $z\in\N_{0}$ of parasites in the ancestor cell. This will be expressed by writing $\P_{z}$, i.e.
$$ \P_{z}(\calT_{0}=1,\,Z_{\varnothing}=z)=1, $$
with corresponding expectation $\E_{z}$. The index is omitted if $z=1$, thus $\P=\P_{1}$ and $\E=\E_{1}$.

\vspace{.1cm}
To rule out trivial cases, we also assume as in \cite{AlsGroettrup:15a} that
\begin{equation}\tag{A1}\label{As.Gamma}
0<\gamma<\infty,
\end{equation}
\begin{equation}\tag{A2}\label{As.Constant1}
p_{1}=\P(N=1)<1\quad\text{and}\quad\P(\calZ_{1}=1)<1,
\end{equation}
\begin{equation}\tag{A3}\label{As3}
p_{k}\,\P(X^{(j,k)}\ne1)>0\quad\text{for at least one $(j,k),\ 1\le j\le k$}.
\end{equation}
By further assuming
\begin{equation}\tag{A4}\label{As4}
\P_2(\calT_{1}^{*}\ge 2)>0,
\end{equation}
we rule out the situation when all parasites in a cell share their offspring into one and the same daughter cell. In that latter case, the BwBP forms a \emph{branching process in iid random environment (BPRE)} (see \cite{AlsGroettrup:15a}), for which the results to be derived here may be found in the literature, see e.g. \cite{AthreyaKarlin:71b, AthreyaKarlin:71a, Tanny:77, Tanny:77/78, Tanny:88}. By Thms. 3.1 and 3.2 in \cite{AlsGroettrup:15a}, \eqref{As4} ensures the \emph{extinction-explosion dichotomy} for $\calTprocStar$ and $\calZproc$, i.e. these processes tend to infinity a.s. if parasites survive. Let $\Surv$ be the event of parasite survival and $\Ext$ its complement. Throughout this paper it is always assumed that parasites survive with positive probability, thus
\begin{equation}\tag{A5}\label{As5}
\P(\Surv)>0.
\end{equation}
In combination with \eqref{As4} and putting $\P^{*}:=\P(\cdot|\Surv)$, this implies $\nu>1$ and $\P^{*}(\calZ_{n}\to\infty)=\P^{*}(\calT_{n}^{*}\to\infty)=1$, see \cite[Theorem 3.3]{AlsGroettrup:15a}.

\vspace{.1cm}
Let us finally recall the definition of the \emph{associated branching process in random environment (ABPRE)} $\assBPRE$ which forms a BPRE with $Z_{0}':=1$ and an iid environmental sequence $\Lambda:=(\Lambda_{n})_{n\ge 0}$ taking values in $\{\calL(X^{(j,k)})|1\le j\le k<\infty\}$ (a countable set) with
\begin{equation*}
 	\P\left(\Lambda_{0}=\calL(X^{(j,k)})\right)=\frac{kp_{k}}{\nu}
\end{equation*}
for all $1\le j\le k<\infty$. Let $g_{\Lambda_{n}}(s)$ be the generating function of $\Lambda_{n}$. The ABPRE describes the evolution of parasites along a randomly chosen cell line (spine) through the tree, and $Z_{n}'$ gives the number of parasites in the spinal cell in generation $n$, see \cite[Section 2]{AlsGroettrup:15a} for further details. This process is one of the major tools to study the BwBP, an important relation being
\begin{equation}\label{Eq.BPRE.EG}
\P(Z_{n}'>0)\ =\ \nu^{-n}\,\E\calT_{n}^{*}
\end{equation}
for all $n\in\N_{0}$, see \cite[Proposition 2.2]{AlsGroettrup:15a}.

\vspace{.1cm}
As pointed out in \cite{AlsGroettrup:15a}, the BwBP can be viewed as a multitype branching process with infinitely many types and comprises the process of type-$\sfA$ cells studied in
\cite{AlsGroettrup:13}. For a more detailed account of the connections to other multitype branching models and related literature we again refer to \cite{AlsGroettrup:15a}.

\section{Main Results}\label{Sec.MainResults}

In the following, we focus on the problem of finding the proper normalizations of the processes $\calTprocStar$ and $\calZproc$ so as to obtain a.s. limits which are positive with positive probability. This problem has been studied in many branching models, see e.g. \cite{Athreya:00, AthreyaKarlin:71b, Athreya+Ney:72, BigginsSouza:93, Olofsson:98, Tanny:88}, and our goal is to provide analogous results for the BwBP. We first concentrate on the process of contaminated cells $\calTprocStar$ before turning to the process of parasites $\calZproc$. To prove the results for the latter, we use a spinal approach different from the one leading to the ABPRE and thus need more preparations and the construction of a size-biased parasitic branching within the branching cell tree. 
Let us stress once more that we only consider the case of a single ancestor cell hosting a single parasite, but that all subsequent results are easily generalized to arbitrary initial populations.

\vspace{.1cm}
Let $N$ denote an arbitrary random variable with distribution $(p_{k})_{k\ge 0}$ and $\filtration$ be the filtration, defined by $\calF_{0}:=\{\emptyset, \Omega\}$ and
\begin{equation*}
\calF_{n}\ :=\ \sigma\left(Z_{\sfv}, N_{\sfv}, X^{(\bullet,k)}_{i,\sfv}\ :\ |\sfv|\le n-1,\, i,k\ge 1\right)\quad\text{for }n\ge 1
\end{equation*}
and $\calF_{\infty}: =\sigma\left(\bigcup_{n\ge 0}\calF_{n}\right)$. It is obvious by definition that $\calF_{n}$ and $X^{(\bullet,N_{\sfv})}_{i,\sfv}$ are independent for all $n\ge 0$, $|\sfv|\ge n$ and $i\ge 1$. 

\subsection{Growth rate of $\calTprocStar$}

Recall that $\calTproc$ forms a standard GWP. The Kesten-Stigum theorem \cite{Athreya+Ney:72} ensures that in the supercritical regime $\nu^{n}=\E\calT_{n}$ is the right normalization of $\calT_{n}$ in the sense that $\nu^{-n}\calT_{n}$ converges a.s. to a positive limit on $\Surv$ iff $\E N\log N<\infty$. However, in order for this to be true also with $\calT_{n}^{*}$ instead of $\calT_{n}$ the parasite population evolving along a random cell line must have a positive chance to survive. In other words, the ABPRE must survive with positive probability.

\begin{Theorem}\label{Th.calT.superMG} 
$(\nu^{-n}\calT_{n}^{*})_{n\ge 0}$ is a nonnegative supermartingale with respect to the filtration $\filtration$ and therefore almost surely convergent to an integrable random variable $L$ as $n\to\infty$. Furthermore,
 \begin{description}[(b)]
\item[(a)] $L=0$ a.s.\ iff one of the following conditions hold true:
	\begin{enumerate}
	\item[(i)] $\nu\le 1$.\vspace{.05cm}
	\item[(ii)] $\E N\log N=\infty$.\vspace{.05cm}
	\item[(iii)] $\E\log g_{\Lambda_{0}}'(1)\le 0$\ or\ $\E\log^-(1-g_{\Lambda_{0}}(0))=\infty$.
	\end{enumerate}
\item[(b)] $\P(L=0)<1$ implies $\{L=0\}=\Ext$ a.s.
\end{description}
\end{Theorem}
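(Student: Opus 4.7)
First I would verify the supermartingale property. Since $Z_\sfv=0$ forces $Z_{\sfv j}=0$ for every $j$, only cells in $\T_n^*$ can produce contaminated daughters, and for $\sfv\in\T_n^*$ one has
\begin{equation*}
\E\!\left[\sum_{j=1}^{N_\sfv}\1_{\{Z_{\sfv j}>0\}}\,\bigg|\,\calF_n\right]\ \le\ \E N\ =\ \nu
\end{equation*}
by bounding each indicator by $1$ and using independence of $N_\sfv$ from $\calF_n$. Summing over $\T_n^*$ yields $\E[\calT_{n+1}^*\mid\calF_n]\le\nu\calT_n^*$, so $(\nu^{-n}\calT_n^*)$ is a nonnegative supermartingale and the martingale convergence theorem produces the a.s.\ integrable limit $L$.

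For the sufficient direction of (a) I would dispatch the three cases in turn. Case (i) is immediate: by \eqref{As.Constant1} the GWP $\calTproc$ extinguishes a.s., hence so does $\calTprocStar$. Case (ii) follows from Kesten-Stigum, which gives $\nu^{-n}\calT_n\to 0$ a.s., dominating $\nu^{-n}\calT_n^*$. For case (iii) the classical Athreya-Karlin extinction criterion for BPREs, applied to the ABPRE, forces $\P(Z_n'>0)\to 0$; combining with \eqref{Eq.BPRE.EG} yields $\E[\nu^{-n}\calT_n^*]\to 0$, and Fatou's lemma then gives $\E L=0$.

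The delicate step is the converse: assuming $\nu>1$, $\E N\log N<\infty$, $\E\log g_{\Lambda_0}'(1)>0$ and $\E\log^-(1-g_{\Lambda_0}(0))<\infty$, one must show $\P(L>0)>0$ without any second-moment assumption on $N$ or on $X^{(j,k)}$. My plan is to exploit the sandwich $0\le\nu^{-n}\calT_n^*\le\nu^{-n}\calT_n$: Kesten-Stigum delivers $L^1$-convergence of the dominating martingale $\nu^{-n}\calT_n$ to a limit $M$ with $\E M=1$, so dominated convergence upgrades $\nu^{-n}\calT_n^*\to L$ to $L^1$-convergence as well. Then \eqref{Eq.BPRE.EG} together with the Athreya-Karlin survival theorem for the ABPRE yields $\E L=\lim_n\P(Z_n'>0)>0$, hence $\P(L>0)>0$. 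This circumvention of the usual $L^2$-route via GWP domination is the step I would flag as the main obstacle; without it one would be forced into truncation or size-biasing arguments.

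For (b) set $q(z):=\P_z(L=0)$. A monotone coupling using the same cell tree $\T$ and the same offspring variables $X_{i,\sfv}^{(j,k)}$ but restricting to one versus $z$ ancestor parasites shows by induction on $|\sfv|$ that $Z_\sfv$ under $\P_1$ is dominated by $Z_\sfv$ under $\P_z$, hence $q(z)\le q(1)<1$ for every $z\ge 1$. Decomposing the BwBP at generation $n$, the subtrees rooted at $\sfv\in\T_n$ are conditionally independent given $\calF_n$ with the corresponding martingale limit distributed as $L$ under $\P_{Z_\sfv}$, so
\begin{equation*}
\P(L=0\mid\calF_n)\ =\ \prod_{\sfv\in\T_n^*}q(Z_\sfv)\ \le\ q(1)^{\calT_n^*}.
\end{equation*}
Since $\calT_n^*\to\infty$ on $\Surv$, the right-hand side vanishes there; L\'evy's 0-1 law then forces $\1_{\{L=0\}}=0$ a.s.\ on $\Surv$, giving $\{L=0\}\subseteq\Ext$ a.s., with the reverse inclusion trivial.
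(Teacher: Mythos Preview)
Your proof is correct and, for the supermartingale property and part~(a), essentially identical to the paper's: the paper also obtains uniform integrability of $(\nu^{-n}\calT_n^*)_{n\ge 0}$ by domination with the Kesten--Stigum martingale $(\nu^{-n}\calT_n)_{n\ge 0}$, then reads off $\E L=\lim_n\P(Z_n'>0)$ via \eqref{Eq.BPRE.EG} and invokes BPRE survival theory. So the step you flag as the ``main obstacle'' is precisely the paper's argument; there is nothing to worry about there. (A small terminological quibble: what you call ``dominated convergence'' is really that domination by a uniformly integrable sequence yields uniform integrability, hence $L^1$-convergence.)

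For part~(b) your route genuinely differs from the paper's. You use the branching decomposition at a fixed generation $n$, the monotone coupling $q(z)\le q(1)$, and L\'evy's 0--1 law to conclude $\1_{\{L=0\}}=\lim_n\P(L=0\mid\calF_n)\le\lim_n q(1)^{\calT_n^*}=0$ on $\Surv$. The paper instead introduces the stopping times $\tau_n=\inf\{m:\calT_m^*\ge n\}$ and bounds $\P(L=0)\le\P(L=0)^n+\P(\tau_n=\infty)$, then lets $n\to\infty$. Both arguments rest on the same two ingredients (conditional independence of subtrees and $\calT_n^*\to\infty$ on $\Surv$); your L\'evy 0--1 approach is the more standard textbook argument and arguably a little cleaner, while the paper's stopping-time version avoids the need to spell out the monotone coupling explicitly.
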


The next two results address the question of growth rate in the case when $L=0$ a.s. Recalling $\nu^{n}=\E\calT_{n}^{*}/\P(Z_{n}'>0)$ from \eqref{Eq.BPRE.EG}, the previous theorem tells us that $\calT_{n}^{*}$ as $n\to\infty$ should behave like its mean modulo an adjustment depending on the ABPRE. Since the environmental sequence of the ABPRE takes values in a countable set, \cite[Theorem 1.1]{Liu:96b} states
\begin{align}\label{Eq.def.rho}
&\lim_{n\to\infty}\P(Z_{n}'>0)^{1/n}\ =\ \inf_{0\le\theta\le1}\E g_{\Lambda_{0}}'(1)^\theta\ =:\ \rho\intertext{where}
&\rho\ =\ 
\begin{cases}
\hfill 1,&\text{if }\E\log g_{\Lambda_{0}}'(1)\ge 0,\\[1mm]
\hfill\nu^{-1}\gamma,&\text{if }\E\log g_{\Lambda_{0}}'(1)<0\ \text{ and }\ \E g_{\Lambda_{0}}'(1)\log g_{\Lambda_{0}}'(1)\le 0,\\[1mm]
1\wedge(\nu^{-1}\gamma),&\text{otherwise}.
\end{cases}\nonumber
\end{align}
Hence, we may expect that the number of contaminated cells grows approximately like $(\nu\rho)^{n}$ and that a proper norming should not differ much from this sequence.

\begin{Theorem}\label{Th.calT.growthRate}
Suppose $\P(\Surv)>0$, thus particularly $\nu>1$. Then 
$$ \lim_{n\to\infty}\frac{1}{n}\log\calT_{n}^{*}\ =\ \log\nu\rho\quad\P^*\text{-a.s.} $$
\end{Theorem}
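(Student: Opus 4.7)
\textit{Plan.} The proof splits into two matching inequalities, $\limsup n^{-1}\log\calT_n^*\le\log(\nu\rho)$ and $\liminf n^{-1}\log\calT_n^*\ge\log(\nu\rho)$, both $\P^*$-a.s. The upper bound is a short first-moment argument based on~\eqref{Eq.BPRE.EG}; the lower bound is where the real work lies.

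\textit{Upper bound.} Fix $\eps>0$. By the definition of $\rho$ in \eqref{Eq.def.rho}, we have $\P(Z_n'>0)\le(\rho\, e^{\eps})^n$ for all $n$ large enough. Combined with the identity $\E\calT_n^*=\nu^n\P(Z_n'>0)$ from \eqref{Eq.BPRE.EG}, this yields $\E\calT_n^*\le(\nu\rho\, e^{\eps})^n$, and Markov's inequality gives
$$
\P\bigl(\calT_n^*\ge(\nu\rho\, e^{2\eps})^n\bigr)\ \le\ e^{-n\eps}.
$$
This is summable, so Borel--Cantelli implies $\calT_n^*<(\nu\rho\, e^{2\eps})^n$ eventually a.s.\ Hence $\limsup n^{-1}\log\calT_n^*\le\log(\nu\rho)+2\eps$ a.s., and letting $\eps\downarrow 0$ finishes this half.

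\textit{Lower bound.} For arbitrary $\eps>0$ we want $\calT_n^*\ge(\nu\rho\, e^{-\eps})^n$ eventually, $\P^*$-a.s. The plan is a multi-step embedded branching comparison combined with truncation. Choose $k$ large enough that $\E\calT_k^*\ge(\nu\rho\, e^{-\eps/2})^k$, which is possible by \eqref{Eq.BPRE.EG} and the definition of $\rho$. Then truncate each parasite offspring vector $X^{(\bullet,N_{\sfv})}_{i,\sfv}$ at some large level $M$ to obtain a dominated BwBP whose $k$-step offspring is bounded, and in particular satisfies an $L\log L$ condition; pick $M$ so large that the truncated $k$-step expected number of contaminated cells still exceeds $(\nu\rho\, e^{-\eps})^k$ and the truncated parasites survive with positive probability. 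Observing the truncated process in $k$-generation blocks, the embedded chain of contaminated cells becomes a (multi-type) branching-like process with finite $L\log L$ moments, so a Kesten--Stigum-type argument — essentially Theorem~\ref{Th.calT.superMG}(a) applied to the $k$-block iterate — produces an a.s.\ limit under the normalization $(\nu\rho\, e^{-\eps})^{-nk}$ which is positive on the survival event of the embedded chain. The extinction--explosion dichotomy \cite[Thm.~3.3]{AlsGroettrup:15a} together with a conditional Borel--Cantelli argument along the infinitely many contaminated cells encountered on $\Surv$ then upgrades this positive-probability statement to a $\P^*$-a.s.\ statement for the original (un-truncated) BwBP; pathwise domination $\calT_n^*\ge\calT_n^{*,M}$ and a standard interpolation between multiples of $k$ take care of indices $n\not\equiv 0\pmod k$. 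This yields $\liminf n^{-1}\log\calT_n^*\ge\log(\nu\rho)-\eps$ $\P^*$-a.s., and $\eps\downarrow 0$ concludes.

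\textit{Main obstacle.} The hard step is the lower bound in the regime $\rho<1$: here the natural supermartingale $\nu^{-n}\calT_n^*$ from Theorem~\ref{Th.calT.superMG} collapses to $0$ a.s., so one is forced into a slower, $(\nu\rho)^{-n}$-type normalization for which no simple martingale is available. Making the truncation at level $M$ and the $k$-block subsampling simultaneously (i) preserve parasite survival, (ii) yield finite $L\log L$ moments at the $k$-block level, and (iii) cost only $o(1)$ in the exponential rate as $k,M\to\infty$, is the delicate balancing act; and promoting the resulting ``positive probability of fast growth'' to a $\P^*$-a.s.\ statement via the extinction--explosion dichotomy is the step where the structural assumption \eqref{As4} is indispensable.
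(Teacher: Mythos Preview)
Your upper bound is correct and matches the paper's argument. For the lower bound, your route is different from the paper's and has a genuine gap. Truncating the parasite offspring $X^{(\bullet,N_{\sfv})}_{i,\sfv}$ at level $M$ does nothing to bound the cell offspring $N$: a single parasite can still contaminate arbitrarily many daughter cells (one for each $j\le N$ with $X^{(j,N)}>0$), so the number of contaminated descendants produced in one $k$-block step is not bounded and stochastically dominates a variable of order $N$. The $L\log L$ condition you need for a Kesten--Stigum argument therefore fails whenever $\E N\log N=\infty$---a case the theorem explicitly covers. In addition, the $k$-block iterate of a BwBP is not itself a BwBP in the sense of this paper (the distribution of parasites among the $k$-step descendant cells depends on the whole intermediate tree, not just on the number of such descendants), so the appeal to ``Theorem~\ref{Th.calT.superMG} applied to the $k$-block iterate'' is not a valid reference; you would need an external multi-type result.

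The paper avoids both problems with a simpler device that requires no truncation and no moment conditions. From each contaminated cell at time $mn$ pick \emph{one} parasite and follow only its progeny; the resulting counts $(S_{m,n})_{n\ge 0}$ form a genuine single-type Galton--Watson process with offspring law $\P(\calT_m^*\in\cdot)$ and mean $\E\calT_m^*\to\infty$, hence supercritical for large $m$. The Heyde--Seneta theorem (which needs no $L\log L$) yields $n^{-1}\log S_{m,n}\to m^{-1}\log\E\calT_m^*$ a.s.\ on the survival set $\Surv_m$ of this GWP; one then shows $\bigcup_{k\ge 0}\Surv_{km_0}=\Surv$ a.s.\ and uses $m^{-1}\log\E\calT_m^*=\log\nu+m^{-1}\log\P(Z_m'>0)\to\log(\nu\rho)$ along $m\in m_0\N$. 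Your embedding-and-truncation program could perhaps be salvaged by also truncating $N$ and invoking a multi-type Kesten--Stigum theorem, but the paper's single-parasite reduction is both shorter and handles the $\E N\log N=\infty$ regime without any extra work.
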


If the ABPRE survives with positive probability, $\calTprocStar$ has nearly the same growth rate as the GWP $\calTproc$ (see Theorem \ref{Th.calT.superMG}), whence the Heyde-Seneta norming of $\calTproc$ gives the right normalization for the process of contaminated cells in this case as well. 

\begin{Theorem}\label{Th.calT.HS}
If $\E\log g_{\Lambda_{0}}'(1)> 0$ and $\E\log^-(1-g_{\Lambda_{0}}(0))<\infty$, then there exists a sequence $(c_{n})_{n\ge 0}$ in $(0,\infty)$ such that $c_{n+1}/c_{n}\to\nu$ and $c_{n}^{-1}\calT_{n}^{*}$ converges a.s. to a finite random variable $L^{*}$ satisfying $\P(L^{*}>0)=\P(\Surv)$. Furthermore, the sequence $(c_{n})_{n\ge 0}$ is a proper Heyde-Seneta norming for $\calTproc$ as well.
\end{Theorem}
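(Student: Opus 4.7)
The plan is to split according to whether $\E N\log N$ is finite or infinite and handle each case separately.

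If $\E N\log N<\infty$, take $c_{n}:=\nu^{n}$. Under the hypotheses of the theorem, condition (iii) of Theorem \ref{Th.calT.superMG}(a) is explicitly ruled out, while (i) (since $\nu>1$ by (A5) together with (A4)) and (ii) (by the current moment assumption) also fail. Theorem \ref{Th.calT.superMG}(a)--(b) therefore delivers $L^{*}:=\lim\nu^{-n}\calT_{n}^{*}$ with $\{L^{*}>0\}=\Surv$ a.s., and the Kesten--Stigum theorem simultaneously identifies $c_{n}=\nu^{n}$ as the (trivial) Heyde--Seneta norming of $\calTproc$.

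The main case is $\E N\log N=\infty$, in which $\nu^{-n}\calT_{n}\to 0$ a.s. Apply the classical Heyde--Seneta theorem to the supercritical GWP $\calTproc$ to obtain $(c_{n})_{n\ge 0}\subset(0,\infty)$ with $c_{n+1}/c_{n}\to\nu$ and $c_{n}^{-1}\calT_{n}\to L_{\calT}$ a.s., $L_{\calT}$ finite and $\{L_{\calT}>0\}=\Surv_{\calT}$ a.s. The final assertion of the theorem then holds by construction, and it remains to show that this $(c_{n})$ also norms $\calTprocStar$. Decompose $\T_{n}\setminus\T_{n}^{*}$ by the generation at which the parasitic lineage first vanished: for $k\ge 1$, set
$$ A_{k}\ :=\ \bigl\{\sfv\in\T_{k}\,:\,Z_{\sfv}=0,\ Z_{\sfv|_{k-1}}>0\bigr\}, $$
so that
$$ \calT_{n}-\calT_{n}^{*}\ =\ \sum_{k=1}^{n}\sum_{\sfv\in A_{k}}T_{n-k}^{(\sfv)}, $$
where $T_{\cdot}^{(\sfv)}$ is the ordinary sub-GWP rooted at $\sfv$, independent of $\calF_{k}$ since parasites have already vanished there. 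A many-to-one computation via the ABPRE yields $\E|A_{k}|=\nu^{k}(q_{k-1}-q_{k})$ with $q_{k}:=\P(Z_{k}'>0)\downarrow q>0$ (the Tanny survival criterion for BPREs being satisfied under our hypothesis), so $\sum_{k\ge 1}(q_{k-1}-q_{k})=1-q<\infty$.

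The core convergence step is $c_{n}^{-1}\calT_{n}^{*}\to L^{*}$ a.s. Writing
$$ c_{n}^{-1}\calT_{n}\ =\ \sum_{k=1}^{K}c_{n}^{-1}\sum_{\sfv\in A_{k}}T_{n-k}^{(\sfv)}\ +\ \frac{c_{n-K}}{c_{n}}\sum_{\sfw\in\T_{K}^{*}}c_{n-K}^{-1}T_{n-K}^{(\sfw)} $$
for each fixed $K$ and letting $n\to\infty$ gives the a.s.\ identity
$$ L_{\calT}\ =\ \sum_{k=1}^{K}\nu^{-k}\sum_{\sfv\in A_{k}}L_{\calT}^{(\sfv)}\ +\ \nu^{-K}\sum_{\sfw\in\T_{K}^{*}}L_{\calT}^{(\sfw)}, $$
where $L_{\calT}^{(\sfv)}$ is the Heyde--Seneta limit of the sub-GWP rooted at $\sfv$. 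As $K\uparrow\infty$ the first sum increases a.s.\ to $L_{R}:=\sum_{k\ge 1}\nu^{-k}\sum_{\sfv\in A_{k}}L_{\calT}^{(\sfv)}$, and hence $M_{K}:=\nu^{-K}\sum_{\sfw\in\T_{K}^{*}}L_{\calT}^{(\sfw)}$ decreases a.s.\ to $L^{*}:=L_{\calT}-L_{R}\ge 0$. From $c_{n}^{-1}\calT_{n}^{*}=(c_{n-K}/c_{n})\sum_{\sfw\in\T_{K}^{*}}c_{n-K}^{-1}\calT_{n-K}^{*(\sfw)}\le(c_{n-K}/c_{n})\sum_{\sfw\in\T_{K}^{*}}c_{n-K}^{-1}T_{n-K}^{(\sfw)}$, sending first $n\to\infty$ and then $K\to\infty$ yields $\limsup_{n}c_{n}^{-1}\calT_{n}^{*}\le L^{*}$ a.s. The matching lower bound $\liminf_{n}c_{n}^{-1}\calT_{n}^{*}\ge L^{*}$ is the main technical obstacle: it requires a uniform-in-$n$ control on the tail $c_{n}^{-1}\sum_{k>K}\sum_{\sfv\in A_{k}}T_{n-k}^{(\sfv)}$ that vanishes as $K\to\infty$. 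I would leverage precisely the summability $\sum_{k}(q_{k-1}-q_{k})=1-q<\infty$ coming from ABPRE survival, combined with an induction on the starting parasite count to identify the limits of the sub-process contributions $c_{n-K}^{-1}\calT_{n-K}^{*(\sfw)}$, to establish this bound.

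For $\P(L^{*}>0)=\P(\Surv)$, clearly $L^{*}=0$ on $\Ext$. Taking expectations in $L^{*}=L_{\calT}-L_{R}$ and using the independence of each $L_{\calT}^{(\sfv)}$ from $A_{k}$ gives $\E L^{*}=q\,\E L_{\calT}>0$ (as $L_{\calT}\ge 0$ with $\P(L_{\calT}>0)=\P(\Surv_{\calT})>0$), hence $\P(L^{*}>0)>0$. Iterating the recursion $L^{*}=\nu^{-1}\sum_{\sfv\in\T_{1}^{*}}L^{*(\sfv)}$ produces $L^{*}=\nu^{-m}\sum_{\sfv\in\T_{m}^{*}}L^{*(\sfv)}$ with $L^{*(\sfv)}\sim L^{*}$ under $\P_{Z_{\sfv}}$ and conditionally independent given $\calF_{m}$. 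A monotone coupling in the starting parasite count gives $q^{*}(z):=\P_{z}(L^{*}=0)\le q^{*}(1)<1$ for every $z\ge 1$, whence
$$ \P(L^{*}=0\mid\calF_{m})\ =\ \prod_{\sfv\in\T_{m}^{*}}q^{*}(Z_{\sfv})\ \le\ q^{*}(1)^{\calT_{m}^{*}}\ \longrightarrow\ 0\quad\text{on }\Surv, $$
using $\calT_{m}^{*}\to\infty$ on $\Surv$ per \cite[Theorem 3.3]{AlsGroettrup:15a}. L\'evy's upward theorem then forces $\P(L^{*}=0,\Surv)=0$, completing the proof.
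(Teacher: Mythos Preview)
Your case split and the treatment of $\E N\log N<\infty$ are fine and match the paper. The difficulty is entirely in the case $\E N\log N=\infty$, and there your argument has a genuine gap.

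\textbf{The lower bound is not proved, and the suggested route breaks.} You correctly obtain $\limsup_{n}c_{n}^{-1}\calT_{n}^{*}\le L^{*}$ from $M_{K}\downarrow L^{*}$. For the matching lower bound you need, as you say, uniform control of the tail
\[
R_{K,n}\ :=\ c_{n}^{-1}\sum_{k=K+1}^{n}\sum_{\sfv\in A_{k}}T_{n-k}^{(\sfv)}.
\]
But in the regime $\E N\log N=\infty$ the Heyde--Seneta constants satisfy $c_{n}=o(\nu^{n})$, hence $\nu^{n}/c_{n}\to\infty$. Since $\E T_{m}^{(\sfv)}=\nu^{m}$ and $\E|A_{k}|=\nu^{k}(q_{k-1}-q_{k})$, one finds
\[
\E R_{K,n}\ =\ \frac{\nu^{n}}{c_{n}}\sum_{k=K+1}^{n}(q_{k-1}-q_{k}),
\]
so the summability $\sum_{k}(q_{k-1}-q_{k})=1-q$ that you want to ``leverage'' is multiplied by a factor tending to infinity; no $L^{1}$ bound is available. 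The alternative you hint at, inferring convergence of $c_{n-K}^{-1}\calT_{n-K}^{*(\sfw)}$ by ``induction on the starting parasite count'', is circular: those are precisely instances of the statement you are trying to prove (for $\P_{z}$ instead of $\P_{1}$), and the base case $z=1$ is exactly what is at stake.

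\textbf{The positivity argument is also unjustified.} Your computation $\E L^{*}=\E L_{\calT}-\E L_{R}=q\,\E L_{\calT}$ presumes $\E L_{\calT}<\infty$. In the regime $\E N\log N=\infty$ one has $\E[c_{n}^{-1}\calT_{n}]=\nu^{n}/c_{n}\to\infty$, and the Seneta--Heyde limit need not have finite mean; the subtraction is then of the form $\infty-\infty$ and the conclusion $\E L^{*}>0$ does not follow.

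\textbf{How the paper proceeds instead.} The paper avoids both obstacles by a truncation of the \emph{cell} reproduction: with $N_{\sfv}(a):=N_{\sfv}\1_{\{N_{\sfv}\le c_{n}(a)\}}$ for $|\sfv|=n$, the truncated process has $(c_{n}(a)^{-1}\calT_{n}(a))_{n\ge 0}$ an $L^{2}$-bounded martingale, so $c_{n}(a)^{-1}\calT_{n}^{*}(a)\to L^{*}(a)$ a.s.\ \emph{and in $L^{1}$}. One then shows $\P(\calT_{n}^{*}(a)\ne\calT_{n}^{*}\text{ for some }n)\to 0$ as $a\to\infty$, which transfers a.s.\ convergence to $c_{n}^{-1}\calT_{n}^{*}$. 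Positivity $\E L^{*}(a)>0$ comes from the identity $\E L^{*}(a)=\lim_{n}\P(Z_{n}'(a)>0)$ and Agresti's survival criterion for the (bounded-offspring) BPRE associated with the truncated model. The final step, $\{L^{*}=0\}=\Ext$ a.s., is then the same stopping-time argument you give.
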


\subsection{Growth rates of $\calZproc$}

Recalling $\P(\Surv)>0$, it is readily seen that $\E\calZ_{n} = \gamma^{\,n}$ for all $n\ge 0$ and that the normalized number of parasites process
$$ W_{n}:=\gamma^{-n}\calZ_{n},\quad n\ge 0, $$
forms a non-negative martingale with respect to $\filtration$. It hence converges a.s. to an integrable random variable $W$. The following results show that $\Wproc$ has very similar properties as a normalized supercritical GWP. On the other hand, it turns out that in order for $W$ to be positive on $\Surv$ an additional condition besides $\E\calZ_{1}\log\calZ_{1}<\infty$ is needed, which guarantees that the partitioning of the parasite offspring into the daughter cells is sufficiently uniform.

Before stating the results, let us mention a related but weaker one by Biggins and Kyprianou \cite[Prop. 8.1]{BigKyp:04} on normalized multi-type branching processes in a very general setting comprising our BwBP. 
 
\begin{Theorem}\label{Th.W0.zlogz}
The following statements are equivalent:
\begin{description}[(b)]
\item[(a)] $\P(W>0)>0$.\vspace{.05cm}
\item[(b)] $\E W=1$.\vspace{.05cm}
\item[(c)] $\Wproc$ is uniformly integrable.\vspace{.05cm}
\item[(d)] $\E\left(\sup_{n\ge 0}W_{n}\right)<\infty$.
\end{description}
\end{Theorem}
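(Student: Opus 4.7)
\emph{Easy implications.} Since $\Wproc$ is a non-negative martingale with $\E W_n=1$, Scheff\'e's lemma yields $(b)\Rightarrow(c)$, $L^{1}$-convergence of a uniformly integrable martingale yields $(c)\Rightarrow(b)$, the implication $(b)\Rightarrow(a)$ is trivial, and $(d)\Rightarrow(c)$ follows by dominated convergence. It therefore suffices to close the cycle by proving $(a)\Rightarrow(d)$.

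\emph{Main step: size-biasing.} I would follow the spine technique of Lyons, Pemantle, and Peres, realizing the size-biased parasitic BwBP announced in Section \ref{Sec.MainResults}. On an enlarged probability space one constructs a probability measure $\wh\P$ with
\begin{equation*}
\frac{d\wh\P}{d\P}\bigg|_{\calF_{n}}\ =\ W_{n}\quad\text{for all }n\ge 0,
\end{equation*}
together with a two-level spine: a distinguished cell lineage carrying a distinguished parasite in each cell, along which the cell reproduction is biased by its total parasitic offspring and, given the daughter cells, the sibship of the spine parasite is drawn size-biased by its allotment. Off the spine, the tree evolves as independent copies of the original BwBP. The classical Lebesgue-decomposition identity $\E W=\wh\P(W<\infty)$ then reduces $(b)$ to $\wh\P$-a.s.\ finiteness of $W$; a spine decomposition writes $W_{n}$ under $\wh\P$ as a spine contribution plus the $\P$-martingales of independent BwBPs grafted on along the spine, and a $0$-$1$ law on the tail of the spine process yields the dichotomy $\E W\in\{0,1\}$, giving $(a)\Rightarrow(b)$. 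To upgrade from $(b)$ to $(d)$ I would then invoke Doob's $L\log L$ maximal inequality combined with the change-of-measure identity $\E[W_{n}\log^{+}W_{n}]=\wh\E[\log^{+}W_{n}]$, controlling the latter uniformly in $n$ by means of the $\wh\P$-a.s.\ finite limit of $W_{n}$.

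\emph{Main obstacle.} The construction of the two-level size-biased BwBP is the technical core of the proof. Unlike the ABPRE spine of the introduction, which biases only the cell reproduction uniformly by $\nu$, the spine needed here must simultaneously bias the cell reproduction by its total parasitic mass and the parasite reproduction inside the sibship by its share; verifying that the resulting Radon-Nikodym derivative equals $W_{n}$ is the delicate step, and the spine-decomposition formula for $W_{n}$ under $\wh\P$ must be written out carefully. Once this framework is in place, the remaining arguments are fairly standard adaptations of the Lyons-Pemantle-Peres machinery to the present two-level branching situation.
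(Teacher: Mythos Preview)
Your easy implications are fine, and your plan for $(a)\Rightarrow(b)$ via size-biasing and a Kolmogorov $0$-$1$ law along the spine is in principle sound; the paper in fact builds precisely this machinery in Section~\ref{Sec.SBP} and Lemma~\ref{Lemma.ZshgSpinalTree}. However, the paper proves the hard implication $(a)\Rightarrow(d)$ by a completely different, more elementary route, and your route has a genuine gap at the step $(b)\Rightarrow(d)$.

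\textbf{What the paper actually does.} No size-biasing is used for this theorem. Following Biggins and Asmussen--Hering, the paper shows directly that there exist constants $a,b>0$ with
\[
\P(W>at)\ \ge\ b\,\P\Big(\sup_{n\ge 0}W_n>t\Big)\qquad\text{for all }t\ge 1,
\]
which after integration over $t$ gives $\E\sup_n W_n\le 1+\E W/(ab)<\infty$. The tail inequality is obtained by a first-passage decomposition: on $E_n=\{W_n>t,\ W_k\le t\text{ for }k<n\}$ one writes $W=\gamma^{-n}\sum_{\sfv\in\T_n^*}W^{(\sfv)}$ and, via the branching property and a truncation $W^{(\sfv)}\wedge Z_\sfv w$, bounds $\P(W>at\mid E_n)$ from below by a constant independent of $n$ and $t$. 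In the paper's logical order this elementary theorem is actually the \emph{source} of the dichotomy $\E W\in\{0,1\}$ used at the start of the proof of Theorem~\ref{Th.Kesten+Stigum}, not a consequence of the spine construction.

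\textbf{The gap in your argument.} For $(b)\Rightarrow(d)$ you invoke Doob's $L\log L$ inequality, which requires $\sup_n\wh\E[\log^+W_n]<\infty$. You propose to obtain this ``by means of the $\wh\P$-a.s.\ finite limit of $W_n$'', but almost sure finiteness of $\wh W$ (equivalently of $\sup_nW_n$ under $\wh\P$) does \emph{not} yield integrability of $\log^+\sup_nW_n$ under $\wh\P$; there is no Fatou-type inequality in the direction you need. To bound $\wh\E[\log^+W_n]$ uniformly one must exploit the explicit spine decomposition and control the growth along the spine, essentially the estimates for $J_1$ and $J_2$ in the paper's proof of Theorem~\ref{Th.Kesten+Stigum}, which in turn rely on $\E\calZ_1\log\calZ_1<\infty$ and the condition on $g'_{\Lambda_0}(1)$. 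Your route therefore makes Theorem~\ref{Th.W0.zlogz} depend on the Kesten--Stigum analysis, whereas the paper's tail-comparison argument proves $(a)\Rightarrow(d)$ independently and first.
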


\begin{Theorem}\label{Th.Kesten+Stigum}
The expectation of $W$ is either $0$ or $1$, and
\begin{equation*}
\E W=1\quad\text{iff}\quad\E\calZ_{1}\log\calZ_{1}<\infty\ \text{ and }\ \E\left(\frac{g_{\Lambda_{0}}'(1)}{\gamma}\log \frac{g_{\Lambda_{0}}'(1)}{\gamma}\right)<0.
\end{equation*}
in which case $\P(W>0)=\P(\Surv)$.
\end{Theorem}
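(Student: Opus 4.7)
My plan is to adapt the Lyons--Pemantle--Peres spine method to the BwBP. First I would construct on an enlarged probability space a size-biased law $\widehat\P$ together with a distinguished cell line $(\widehat V_n)_{n\ge 0}$ and, within each spinal cell, a distinguished ``spine'' parasite, such that: the spine parasite in cell $\widehat V_n$ reproduces according to the size-biased law of $X^{(\bullet,N)}$ with Radon--Nikodym derivative $\gamma^{-1}\sum_{j\le N}X^{(j,N)}$; the spine then moves to daughter cell $\widehat V_{n+1}=\widehat V_n\,j$ with probability proportional to the number of its offspring sent into daughter $j$; and off-spine parasites and cells evolve according to the original BwBP dynamics. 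A standard many-to-one identity then shows that $W_n$ is the Radon--Nikodym density of $\widehat\P$ with respect to $\P$ on $\calF_n$.

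By the Durrett--Lyons dichotomy, $\E W=1$ iff $W_n$ stays bounded $\widehat\P$-a.s., so the problem reduces to an asymptotic analysis of $W_n$ under $\widehat\P$. To this end I would decompose $\calZ_n$ under $\widehat\P$ as the sum of $\Zspine{n}$ (the parasites in the spinal cell at time $n$) and the descendants of the parasites belonging to those sub-trees that branch off the spine strictly before time $n$. The off-spine sub-populations are conditionally independent unbiased BwBP copies, and a standard $z\log z$ argument in the spirit of Biggins--Kyprianou shows that their normalized contribution is $\widehat\P$-a.s.\ $O(1)$ iff $\E\calZ_1\log\calZ_1<\infty$. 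On the spine, conditional on the sequence $\Lambda=(\Lambda_k)_{k\ge 0}$ of spine-parasite reproduction laws, the process $(\Zspine{k})_{k\ge 0}$ is essentially a BPRE with environment $\Lambda$, and $\log\Zspine{n}$ tracks $\sum_{k<n}\log g'_{\Lambda_k}(1)$ up to $o(n)$ fluctuations under the same moment condition.

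Combining these two pieces, a $\widehat\P$-a.s.\ strong law for the iid sequence $\bigl(\log g'_{\Lambda_k}(1)\bigr)_{k\ge 0}$ yields
\begin{equation*}
\frac{1}{n}\log W_n\ \xrightarrow[n\to\infty]{\widehat\P\text{-a.s.}}\ \E^{\widehat\P}\!\bigl[\log g'_{\Lambda_0}(1)\bigr]-\log\gamma\ =\ \frac{1}{\gamma}\,\E\!\left[g'_{\Lambda_0}(1)\log\frac{g'_{\Lambda_0}(1)}{\gamma}\right],
\end{equation*}
the last equality coming from the observation that $\widehat\P$ biases the ABPRE environmental law by the additional factor $g'_{\Lambda_0}(1)/\gamma$. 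Consequently $W_n$ stays $\widehat\P$-a.s.\ bounded precisely when both $\E\calZ_1\log\calZ_1<\infty$ and $\E[(g'_{\Lambda_0}(1)/\gamma)\log(g'_{\Lambda_0}(1)/\gamma)]<0$ hold; if either fails, $W_n\to\infty$ $\widehat\P$-a.s., whence $\widehat\P\perp\P$ and $\E W=0$. In combination with Theorem~\ref{Th.W0.zlogz} this yields both the $\{0,1\}$-dichotomy for $\E W$ and the claimed equivalence. The remaining identification $\P(W>0)=\P(\Surv)$ is then obtained via the functional equation $\P_z(W=0)=\E_z\bigl[\prod_{\sfv\in\T_1}\P_{Z_\sfv}(W=0)\bigr]$ derived from the generation-$1$ branching property, which the extinction-probability function $z\mapsto\P_z(\Ext)$ also satisfies; uniqueness of the relevant supercritical solution then forces the two to coincide.

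The main obstacle is the construction and verification of the size-biased spine. A priori the size-biasings of the spine parasite's offspring vector and of the spine's daughter-cell choice are entangled --- the vector $(X^{(j,N)})_{j\le N}$ both carries the size-bias and determines the weight $X^{(j,N)}/\gamma$ of each daughter cell --- and one must check that under $\widehat\P$ the environmental sequence $(\Lambda_k)_{k\ge 0}$ along the spine is genuinely iid. Verifying this iid property is precisely the step that produces the additional $g'_{\Lambda_0}(1)/\gamma$ factor relative to the ABPRE and yields the exact form of the $z\log z$-type drift condition in the statement.
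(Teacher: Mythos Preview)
Your spine construction and use of the Durrett--Lyons dichotomy are exactly the approach the paper takes, and your identification of the size-biased environmental drift as $\E\log\mu_{\wh U_0,\wh T_0}-\log\gamma$ (up to a positive factor of~$\nu$ in your displayed identity) is correct. However, your argument has a genuine gap at the boundary case
\[
\E\!\left(\frac{g'_{\Lambda_0}(1)}{\gamma}\log\frac{g'_{\Lambda_0}(1)}{\gamma}\right)=0.
\]
Your claim that $\tfrac{1}{n}\log W_n$ converges $\widehat\P$-a.s.\ to the drift cannot hold on both sides: when the drift is strictly negative, $W_n$ actually converges under $\widehat\P$ to a finite \emph{positive} limit, so $\tfrac{1}{n}\log W_n\to 0$, not to the negative drift. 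What your analysis really yields is a lower bound $W_n\ge\gamma^{-n}\Zspine{n}$, and the statement ``$\log\Zspine{n}$ tracks $\sum_{k<n}\log g'_{\Lambda_k}(1)$ up to $o(n)$'' is too coarse: at drift zero it gives $\tfrac{1}{n}\log(\gamma^{-n}\Zspine{n})\to 0$, which does not decide whether $\limsup_n\gamma^{-n}\Zspine{n}$ is finite or infinite. The paper closes this gap by recognizing that $(\Zspine{n}-1)_{n\ge 0}$ is a BPREI and invoking its Theorem~\ref{Th.BPREI.SC}(a) to obtain the much sharper statement that $\Zspine{n}\big/\prod_{i<n}\mu_{\wh U_i,\wh T_i}$ converges a.s.\ to a strictly positive limit; the factor $\prod_{i<n}\mu_{\wh U_i,\wh T_i}/\gamma^n$ is then a genuine random walk exponential whose $\limsup$ is $+\infty$ whenever the drift is $\ge 0$ and the increments are nondegenerate (the latter secured by \eqref{As4}--\eqref{As5}). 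This is the missing ingredient.

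A second, smaller issue: the off-spine sub-trees are not quite ``conditionally independent unbiased BwBP copies'' started from one parasite each. The number of parasites seeded into the non-spinal daughters of $\wh V_k$ depends on $\Zspine{k}$, which itself grows with~$k$; a plain LPP-style sum of iid bounded terms does not apply. The paper handles this by conditioning on the spine $\sigma$-field $\calG$ and bounding $\E(\wh W_n\,|\,\calG)$ through a two-term decomposition ($J_1$ and $J_2$), where control of $J_2$ again relies on the BPREI estimate (Corollary~\ref{Cor.supConv}). Finally, your proposed functional-equation route to $\P(W>0)=\P(\Surv)$ leans on a uniqueness claim that is not standard in this infinite-type setting; the paper instead uses a short stopping-time argument ($\tau_n=\inf\{m:\calT_m^*\ge n\}$) identical in spirit to its proof of Theorem~\ref{Th.calT.superMG}(b).
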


Our third theorem asserts that $\calZ_{n}$ still grows like its expected value $\gamma^{\,n}$ on a logarithmic scale if $\E \calZ_{1}\log\calZ_{1}=\infty$. Thus, a proper normalization should not differ much from this sequence.

\begin{Theorem}\label{Th.Heyde+Seneta.approach}
If $\E\left(\frac{g_{\Lambda_{0}}'(1)}{\gamma}\log \frac{g_{\Lambda_{0}}'(1)}{\gamma}\right)<0$, then $W_{n}^{1/n}\to1$ a.s. on $\Surv$ as $n\to\infty$.
\end{Theorem}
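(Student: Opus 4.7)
The upper bound $\limsup_{n\to\infty}n^{-1}\log W_n\le 0$ a.s.\ is free from martingale convergence: $\Wproc$ is the nonnegative $\filtration$-martingale recorded just before Theorem \ref{Th.W0.zlogz}, hence converges a.s.\ to an integrable limit $W$, and a convergent sequence is bounded, so $\sup_n W_n<\infty$ a.s. The entire content of the theorem is therefore the matching lower bound
\begin{equation*}
\liminf_{n\to\infty}\frac{1}{n}\log\calZ_n\ \ge\ \log\gamma\qquad\P^*\text{-a.s.}
\end{equation*}

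The plan for this is a Heyde--Seneta-style coupled truncation. For $M\in\N$, set $X^{(j,k,M)}_{i,\sfv}:=X^{(j,k)}_{i,\sfv}\wedge M$ and construct the truncated BwBP $\calZ^{(M)}_n$ on the same cell tree $\T$, so that $\calZ^{(M)}_n\le\calZ_n$ pointwise, $\calZ^{(M)}_n\uparrow\calZ_n$ as $M\to\infty$, and $\gamma_M:=\E\calZ^{(M)}_1\uparrow\gamma$ by monotone convergence. I then apply Theorem \ref{Th.Kesten+Stigum} to $\calZ^{(M)}$ for $M$ large. For the sign condition, write $Y_M:=g_{\Lambda^{(M)}_0}'(1)/\gamma_M$ and $Y:=g_{\Lambda_0}'(1)/\gamma$; then $Y_M\to Y$ pointwise, the inequality $x\log x\ge -e^{-1}$ controls the negative part, and $Y_M\le g_{\Lambda_0}'(1)/\gamma_1$ together with $\E[g_{\Lambda_0}'(1)\log^+g_{\Lambda_0}'(1)]<\infty$ (which follows from $\E[Y\log Y]<0$, since $(Y\log Y)^-\le e^{-1}$ forces $\E(Y\log Y)^+<e^{-1}$) dominates the positive part, so dominated convergence yields
\begin{equation*}
\E[Y_M\log Y_M]\ \longrightarrow\ \E[Y\log Y]\ <\ 0
\end{equation*}
and the sign hypothesis is inherited by $\calZ^{(M)}$ for all $M$ sufficiently large. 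The moment condition $\E\calZ^{(M)}_1\log\calZ^{(M)}_1<\infty$ is automatic when $\E N\log N<\infty$ (since $\calZ^{(M)}_1\le MN$); in the complementary case one must additionally truncate $N_\sfv\mapsto N_\sfv\wedge M$ so that $\calZ^{(M)}_1\le M^2$, at the cost of having to rework the coupling so as to preserve $\calZ^{(M)}_n\le\calZ_n$.

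Theorem \ref{Th.Kesten+Stigum} applied to $\calZ^{(M)}$ yields $W^{(M)}_n:=\gamma_M^{-n}\calZ^{(M)}_n\to W^{(M)}$ a.s.\ with $\P(W^{(M)}>0)=\P(\Surv^{(M)})$, where $\Surv^{(M)}:=\{\calZ^{(M)}_n\not\to 0\}$. On $\{W^{(M)}>0\}$ one has $n^{-1}\log\calZ^{(M)}_n\to\log\gamma_M$, so the domination $\calZ^{(M)}_n\le\calZ_n$ gives $\liminf_n n^{-1}\log\calZ_n\ge\log\gamma_M$ on $\{W^{(M)}>0\}\subseteq\Surv$. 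Since $\Surv^{(M)}$ is monotone increasing in $M$ and $\P(\Surv^{(M)})\uparrow\P(\Surv)$ (by the distributional convergence of the truncated offspring structures to the original combined with continuity of the BwBP extinction probability, using the extinction--explosion dichotomy of \cite[Theorem~3.3]{AlsGroettrup:15a}), sending $M\to\infty$ with $\gamma_M\uparrow\gamma$ produces $\liminf_n n^{-1}\log\calZ_n\ge\log\gamma$ on $\Surv$, which combined with the upper bound gives $W_n^{1/n}\to 1$ $\P^*$-a.s. The step I expect to be the main technical obstacle is handling the case $\E N\log N=\infty$: truncating $N$ destroys the clean pointwise coupling, because $\calL(X^{(\bullet,k)})$ and $\calL(X^{(\bullet,k\wedge M)})$ are a priori unrelated for $k>M$, so the lower-bound transfer must be carried out either via stochastic domination supported by some monotonicity of $k\mapsto\calL(X^{(\bullet,k)})$ or by a more intricate coupling construction.
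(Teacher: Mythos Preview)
Your overall strategy---truncate the offspring, verify that Theorem~\ref{Th.Kesten+Stigum} applies to the truncated process, and transfer the lower bound back via coupling---is exactly the paper's, and your dominated-convergence check of the sign condition is sound. The gap is in how you pass from the truncated survival set to the full $\Surv$.

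You assert $\P(\Surv^{(M)})\uparrow\P(\Surv)$ by ``continuity of the BwBP extinction probability,'' but no such result is established here or in \cite{AlsGroettrup:15a}, and it is not a pointwise fact: for a given $\omega\in\Surv$ there is no reason any finite truncation $\calZ_n^{(M)}(\omega)$ should survive, since survival of the original process may depend on unboundedly large offspring counts along the way. The paper fills this with a \emph{restart} (conditioning) argument. For a \emph{fixed} truncation level $a$ chosen so that $\gamma(a)\ge(1-\eps)\gamma$ and $\P(W(a)>0)>0$, run the process under the \emph{original} law up to generation $k$ and under the truncated law thereafter; picking one parasite from each cell of $\T_k^*$ as ancestor of an independent truncated subprocess gives
\begin{equation*}
\liminf_{n\to\infty}\frac{\calZ_n}{(1-\eps)^n\gamma^{\,n}}\ \ge\ \frac{1}{\gamma^{\,k}}\sum_{\sfv\in\T_k^*}W^{(\sfv)}(a),
\end{equation*}
with the $W^{(\sfv)}(a)$ conditionally i.i.d.\ given $\calF_k$, each distributed as $W(a)$ under $\P_1$. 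Hence
\begin{equation*}
\P\!\left(\liminf_{n\to\infty}\frac{\calZ_n}{(1-\eps)^n\gamma^{\,n}}>0\ \Big|\ \calF_k\right)\ \ge\ 1-\P(W(a)=0)^{\calT_k^*},
\end{equation*}
and since $\calT_k^*\to\infty$ on $\Surv$ by the extinction--explosion dichotomy, letting $k\to\infty$ yields $\Surv\subseteq\{\liminf_n\calZ_n/((1-\eps)^n\gamma^{\,n})>0\}$ a.s.\ for every $\eps>0$. The point is that you never need to send the truncation level to infinity: a single level with $\P(W(a)>0)>0$ suffices once the explosion of $\calT_k^*$ furnishes infinitely many independent restarts.

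On the $N$-truncation: your concern about $\calL(X^{(\bullet,k)})$ versus $\calL(X^{(\bullet,k\wedge M)})$ is legitimate for the choice $N\mapsto N\wedge M$, but disappears if you instead use $N(b):=N\1_{\{N\le b\}}$ as the paper does. This kills any cell with $N_\sfv>b$ outright; surviving cells keep their original $X^{(\bullet,N_\sfv)}$, so the pointwise domination $\calZ_n(b)\le\calZ_n$ is automatic and the unbounded-$N$ case reduces cleanly to the bounded one (where a further truncation of $X$ handles the $\calZ_1\log\calZ_1$ moment).
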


The proofs of the stated result, especially Theorem \ref{Th.Kesten+Stigum}, will make use of the size-biasing technique, which since the work by Lyons et al. \cite{LyPePe:95} has become a standard technique in the study of branching models, see e.g. \cite{Athreya:00,BigKyp:04,Geiger:99,Kuhlbusch:04,KurtzLPP:97,Lyons:97,Olofsson:98,Olofsson:09} and also \cite{WaymireWilliams:96} for a similar construction in the context of multiplicative cascades. In Section \ref{Sec.SBP}, we will define a size-biased BwBP which is different from the ABPRE $(Z_{n}')_{n\ge 0}$ introduced in \cite{AlsGroettrup:15a} and in fact strongly related to a \emph{branching process in random environment with immigration (BPREI)}. The latter will therefore be discussed in the following section including the statement of limit results that will be useful for the analysis of the size-biased BwBP.

\section{The branching process in random environment with immigration}

The following can be seen as a stand-alone section of this article and does therefore not refer to the notation previously introduced.

\vspace{.1cm}
The Galton-Watson processes with immigration in fixed environment has been studied by many authors in the past, see \cite{Asmussen+Hering:83} for the most important results and also references. In a multitype setting and random environment, Key \cite{Key:87} and Roitershtein \cite{Roitershtein:07} proved limit theorems in the subcritical case. Results for the single-type process in random environment for all three (subcritical, critical and supercritical) regimes have been obtained more recently by Bansaye \cite{Bansaye:09}. On the other hand, a theorem of Kesten-Stigum type for the BPREI, indispensable for our analysis of the BwBP, appears to be an open problem and is therefore presented below (Theorem \ref{Th.BPREI.SC}).

\vspace{.1cm}
Turning to a model description, denote by $\calP$ the set of probability laws on $\N_{0}$ and by $\calP_{1}$ the subset of laws with finite mean. Let the environmental sequence $\calU=(\calU_{n})_{n\ge 0}=(\calU_{n,1},\calU_{n,2})_{n\ge 0}$ consist of iid random variables taking values in the set $\calP_{1}\times\calP$ endowed with the $\sigma$-field induced by the total variation metric. Given $\calU$, let $\{X_{n,k}|(n,k)\in\N_{0}\times\N\}$ and $\{\xi_{n}|n\in\N_{0}\}$ be conditionally independent families of iid $\N_{0}$-valued random variables such that, for all $n\ge 0$ and $k\ge 1$,
\begin{equation*}
\P\left(X_{n,k}\in\cdot|\calU\right)\ =\ \calU_{n,1}\quad\text{and}\quad\P\left(\xi_{n}\in\cdot|\calU\right)\ =\ \calU_{n,2}\quad\text{a.s.}
\end{equation*}
To ensure that immigration occurs with positive probability, we assume throughout this section that
\begin{equation}\label{immigration possible}
\P\left(\xi_{0}>0\right)>0.
\end{equation}

The BPREI $(Z_{n})_{n\ge 0}$ with environmental sequence $\calU$ is then defined by $Z_{0}:=0$ and, recursively,
\begin{equation}\label{Eq.BPREI.Recursive}
Z_{n+1} \ :=\ \sum_{i=1}^{Z_{n}}X_{n,k}\ +\ \xi_{n}
\end{equation}
for $n\ge 0$. The $X_{k,n}$, $k\ge 1$, provide the numbers of offspring of the individuals at generation $n$, while $\xi_{n}$ gives the number of immigrants at time $n$. It is clear by our assumptions that $Z_{n}$ and $\{X_{m,k},\xi_{m}|m\ge n,k\ge 1\}$ are independent for all $n\ge 0$ which in turn ensures the \emph{Markov property} for $(Z_{n})_{n\ge 0}$. Let
\begin{equation*}
\mu_{\calU_{n}}\ :=\ \E\left(X_{1,n}|\calU_{n}\right)\ =\ \E\left(X_{1,n}|\calU_{n,1}\right)
\end{equation*}
the mean of $\calU_{n,1}$. As in the setting without immigration, we consider the \emph{supercritical case} $\E\log\mu_{\calU_{0}}>0$, the \emph{critical case} $\E\log\mu_{\calU_{0}}=0$, and the \emph{subcritical case} $\E\log\mu_{\calU_{0}}<0$.

\vspace{.1cm}
Before stating the main results of this section, we recall the standard fact that
\begin{equation}\label{standard iid}
\limsup_{n\to\infty}\,\frac{X_{n}}{n}\ =\ 
\begin{cases}
\hfill 0,&\text{if }\E X_{0}<\infty,\\[1ex]
\infty,&\text{if }\E X_{0}=\infty.
\end{cases}
\end{equation}
for any sequence $(X_{n})_{n\ge 0}$ of iid and non-negative random variables.

The following martingle limit theorem of Kesten-Stigum type for the supercritical BPREI will be of great use for our later analysis of the BwBP. The proof follows arguments of Asmussen and Hering in \cite{Asmussen+Hering:83} for the branching process with immigration.

\begin{Theorem}\label{Th.BPREI.SC}
Let $\E\log\mu_{\calU_{0}}>0$ and recall that $\mu_{\calU_{0}}<\infty$ a.s.
\begin{description}[(b)]
\item[(a)] If $\E\log^{+}\xi_{0}<\infty$, then there exists a finite random variable $Z_{\infty}$ such that
\begin{equation*}
\lim_{n\to\infty}\frac{Z_{n}}{\prod_{i=0}^{n-1}\mu_{\calU_{i}}}\ =\ Z_{\infty}\quad\text{a.s.}
\end{equation*}
and the following assertions are equivalent:
\begin{enumerate}
\item[(i)] $\P(Z_{\infty}>0)=1$.\vspace{.04cm}
\item[(ii)] $\P(Z_{\infty}>0)>0$.\vspace{.04cm}
\item[(iii)] $\E((X_{1,0}\log^{+}X_{1,0})/\mu_{\calU_{0}})<\infty$.
\end{enumerate}
\item[(b)] If $\E\log^{+}\xi_{0}=\infty$, then $\limsup_{n\to\infty}c^{-n}Z_{n}=\infty$ a.s. for every $c\in(0,\infty)$.
\end{description}
\end{Theorem}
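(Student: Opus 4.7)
Following the scheme of Asmussen and Hering \cite{Asmussen+Hering:83}, I would decompose the BPREI into immigration cohorts. For each $k\ge 0$, let $Y^{(k)}_n$ denote the number of descendants at generation $n$ of the $\xi_k$ immigrants entering at time $k+1$. Conditionally on $\calU$ and $\xi_k$, $(Y^{(k)}_n)_{n\ge k+1}$ is a BPRE in the shifted environment $(\calU_{k+1},\calU_{k+2},\ldots)$ started from $\xi_k$ particles, and $Z_n = \sum_{k=0}^{n-1} Y^{(k)}_n$. Setting $\Pi_n := \prod_{i=0}^{n-1}\mu_{\calU_i}$, $S_n := \log\Pi_n$, and
\[
W^{(k)}_n \,:=\, \frac{Y^{(k)}_n}{\prod_{i=k+1}^{n-1}\mu_{\calU_i}},
\]
each $(W^{(k)}_n)_{n>k}$ is a conditional nonnegative martingale with mean $\xi_k$ and thus converges a.s.\ to some finite $W^{(k)}_\infty$. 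The basic identity reads
\[
\frac{Z_n}{\Pi_n} \,=\, \sum_{k=0}^{n-1} W^{(k)}_n\,e^{-S_{k+1}}.
\]

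For part (a), the SLLN gives $S_n/n\to\E\log\mu_{\calU_0}>0$ a.s., while \eqref{standard iid} applied to the iid sequence $(\log^+\xi_n)$ yields $\log^+\xi_n/n\to 0$ a.s.\ under $\E\log^+\xi_0<\infty$. Hence $\xi_k e^{-S_{k+1}}$ decays exponentially in $k$ and is a.s.\ summable. Since $\E[W^{(k)}_\infty\mid\calU,\xi_k]\le\xi_k$ by Fatou, the candidate limit $Z_\infty := \sum_{k\ge 0} W^{(k)}_\infty\,e^{-S_{k+1}}$ converges absolutely a.s. To show $Z_n/\Pi_n\to Z_\infty$ a.s., I would split at a cutoff $K$: the initial $K$ cohorts converge termwise by martingale convergence, while for the tail $B_n:=\sum_{k=K}^{n-1} W^{(k)}_n\,e^{-S_{k+1}}$ observe that, conditionally on $\calU$ and $(\xi_k)$, $B_n$ is a nonnegative submartingale with $\E[B_n\mid\calU,(\xi_k)] = \sum_{k=K}^{n-1}\xi_k e^{-S_{k+1}}\le \sum_{k\ge K}\xi_k e^{-S_{k+1}}$, which becomes arbitrarily small a.s.\ as $K\to\infty$. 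Doob's maximal inequality for nonnegative submartingales then gives the required uniform-in-$n$ tail control.

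For the equivalences, (i)$\Rightarrow$(ii) is trivial. For (iii)$\Rightarrow$(i), Tanny's Kesten--Stigum theorem \cite{Tanny:88} ensures that, conditionally on $\calU$ and $\xi_k\ge 1$, each $W^{(k)}_\infty$ is positive with positive probability. Since $\P(\xi_0\ge 1)>0$ by \eqref{immigration possible} and the single-particle BPRE martingale limit is nondegenerate, a conditional Borel--Cantelli argument, exploiting the independence of cohorts given $\calU$ together with the ergodicity of the environment, yields $W^{(k)}_\infty>0$ for infinitely many $k$ a.s., and hence $Z_\infty>0$ a.s. For (ii)$\Rightarrow$(iii): if $\P(Z_\infty>0)>0$, the decomposition forces $\P(W^{(k)}_\infty>0)>0$ for some $k$; by identical distribution (stationarity of the environment) this means $\P(W^{(0)}_\infty>0)>0$, which in turn forces the single-particle BPRE martingale limit to be positive with positive probability (since $W^{(0)}_\infty$ is a sum of $\xi_0$ iid copies of it). Tanny's converse direction then delivers (iii).

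Finally, part (b) follows directly from \eqref{standard iid}: under $\E\log^+\xi_0=\infty$, $\limsup_n \log\xi_n/n = \infty$ a.s., so for any $c>0$ the event $\{\xi_n\ge c^{\,n+2}\}$ occurs for infinitely many $n$ a.s.; since $Z_{n+1}\ge\xi_n$, this gives $c^{-(n+1)}Z_{n+1}\to\infty$ along a subsequence. The main technical hurdle I anticipate is the uniform tail control in part (a): proving that cohorts with index near $n$, whose martingales have not yet stabilized, do not accumulate in the limit. This is precisely the step where the integrability condition on $\log^+\xi_0$ (via \eqref{standard iid}) makes its essential contribution.
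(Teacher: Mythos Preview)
Your proposal is correct and close in spirit to the paper's proof, but organized differently. For the a.s.\ convergence in (a), the paper takes a shorter path: it observes directly that $(Z_n/\Pi_n)_{n\ge 0}$ is a nonnegative submartingale under $\P(\cdot\mid\calF_0)$ with $\calF_0=\sigma((\xi_k)_{k\ge 0},\calU)$, and that its conditional $L^1$-norm is bounded by $\sum_{k\ge 0}\xi_k/\prod_{i=0}^{k}\mu_{\calU_i}<\infty$ a.s.\ via the same SLLN/\eqref{standard iid} reasoning you use; convergence then follows in one stroke from the submartingale convergence theorem, without the split at $K$. Your split argument also works, but the tool you name is not quite right: Doob's maximal inequality only gives control in probability, whereas what you actually need (and what makes your $B_n$ converge) is again the $L^1$-bounded submartingale convergence theorem applied to $B_n$ itself. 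For the equivalences, the paper proceeds by contraposition for (ii)$\Rightarrow$(iii): assuming $q(\calU)=1$ a.s., it derives the distributional identity $Z_\infty\stackrel{d}{=}Z_\infty'/\prod_{j=0}^{m-1}\mu_{\calU_j}$ for all $m$ and lets $m\to\infty$; for (iii)$\Rightarrow$(i) it first shows $Z_n\to\infty$ a.s.\ and then bounds $\P(Z_\infty=0\mid Z_k,\calU)\le q((\calU_i)_{i\ge k})^{Z_k}$. Your cohort-based arguments for both directions are valid alternatives; the conditional Borel--Cantelli step does go through because the cohorts are conditionally independent given $\calU$ and the stationary ergodic sequence $\big((1-\calU_{k,2}(\{0\}))(1-q_k)\big)_{k\ge 0}$ has positive mean under (iii). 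Part (b) is handled identically in both.
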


\begin{proof}
(a) Defining the filtration
\begin{equation*}
\calF_{n}:=\sigma(Z_{0},Z_{1},\dots,Z_{n}, (\xi_{k})_{k\ge 0}, \calU),\quad n\in\N_{0},
\end{equation*}
thus $\calF_{0}=\sigma((\xi_{k})_{k\ge 0},\calU)$, the sequence $((\prod_{i=0}^{n-1}\mu_{\calU_{i}})^{-1}Z_{n})_{n\ge 0}$ is adapted and a.s. a $L^1$-bounded, nonnegative and thus a.s. convergent submartingale with respect to the conditional measure $\P(\cdot|\calF_{0})$ as the subsequent arguments show. We have
\begin{align*}
\E(Z_{n+1}|\calF_{n})\ =\ \sum_{k=1}^{Z_{n}}\E(X_{n,k}|\calF_{n}) + \xi_{n}\ \ge\ \sum_{k=1}^{Z_{n}}\E(X_{n,k}|\calU)\ =\ Z_{n}\,\mu_{\calU_{n}}\qquad\text{a.s.}
\end{align*}
and thereby
\begin{align*}
\E(Z_{n+1}|\calF_{0})\ &=\ \E\left(\E(Z_{n+1}|\calF_{n})|\calF_{0}\right)\\
&=\ \E\left(\sum_{k=1}^{Z_{n}}\E(X_{n,k}|\calF_{n}) + \xi_{n}\bigg|\calF_{0}\right)\\
&=\ \E(Z_{n}|\calF_{0})\mu_{\calU_{n}}+\xi_{n}\qquad\text{a.s.}
\end{align*}
for $n\ge 0$. It then follows by iteration that, for all $n\ge 0$,
\begin{align}
\E\left(\frac{Z_{n+1}}{\prod_{i=0}^{n}\mu_{\calU_{i}}}\bigg|\calF_{0}\right)\ &=\ \E\left(\frac{Z_{n}}{\prod_{i=0}^{n-1}\mu_{\calU_{i}}}\bigg|\calF_{0}\right)\ +\ \frac{\xi_{n}}{\prod_{i=0}^{n}\mu_{\calU_{i}}}\nonumber\\
&=\ \sum_{k=0}^{n}\frac{\xi_{k}}{\prod_{i=0}^{k}\mu_{\calU_{i}}}\ \le\ \sum_{k\ge 0}\frac{\xi_{k}}{\prod_{i=0}^{k}\mu_{\calU_{i}}}\label{Eq.BPREI.bendEW}\\
&\le\ \sum_{k\ge 0}\exp\left(\log^{+}\xi_{k}-\sum_{i=0}^{k}\log\mu_{\calU_{i}}\right)\nonumber\\
&=\ \sum_{k\ge 0}\left(\exp\left[\frac1{k+1}\left(\log^{+}\xi_{k}-\sum_{i=0}^{k}\log\mu_{\calU_{i}}\right)\right]\right)^{k+1}\ \text{a.s.}\label{Eq.SupBPREI.finiteSum}
\end{align}
Since $(\xi_{n})_{n\ge 0}$ and $(\mu_{\calU_{n}})_{n\ge 0}$ are iid sequences and $\E\log^{+}\xi_{0}<\infty$, \eqref{standard iid} and the strong law of large numbers provide us with
\begin{equation*}
 \limsup_{k\to\infty}\frac1{k+1}\left(\log^{+}\xi_{k}-\sum_{i=0}^{k}\log\mu_{\calU_{i}}\right)\ =\ -\E\log\mu_{\calU_{0}}\ <\ 0\quad\text{a.s.}
\end{equation*}
and thus the almost sure finiteness of the sums in \eqref{Eq.BPREI.bendEW} and  \eqref{Eq.SupBPREI.finiteSum}. As a consequence, $((\prod_{i=0}^{n-1}\mu_{\calU_{i}})^{-1}Z_{n})_{n\ge 0}$ is indeed a $L^{1}$-bounded and thus a.s. convergent submartingale under $\P(\cdot|\calF_{0})$ which leaves us with a proof of the equivalence of (i)--(iii).

\vspace{.1cm}
Denote by $(\bar Z_{n})_{n\ge 0}$ a BPRE starting with one ancestor, environmental sequence $\calU$ and no immigration. By \cite[Theorem 2]{Tanny:88}, $(\bar Z_{n}/\E\bar Z_{n})_{n\ge 0}$ converges a.s. to a limit $\bar W$ as $n\to\infty$, which is nondegenerate, i.e. $q(\calU):=\P(\bar W=0|\calU)<1$ with positive probability, iff (iii) holds true, i.e. $\E((X_{1,0}\log^{+}X_{1,0})/\mu_{\calU_{0}})<\infty$. So it remains to verify the implications
\begin{equation}\label{Eq.BPREI.super.zlogz}
\P(Z_{\infty}>0)>0\quad\Rightarrow\quad\P(q(\calU)<1)>0\quad\Rightarrow\quad\P(Z_{\infty}>0)=1.
\end{equation}

We show the first one by contraposition and assume that $q(\calU)=1$ a.s. Note that
\begin{equation}\label{Eq.BPREI.distrEquiv}
Z_{n}\ =\ \sum_{k=0}^{n}\sum_{i=1}^{\xi_{k}}Z_{k,n-k}(i),
\end{equation}
where $Z_{k,n-k}(i)$ denotes the number of individuals in the $(n-k)^{th}$ generation of a BPRE started with the $i^{\,th}$ immigrant at time $k$ and with reproduction laws given by $\calU_{k,1},\calU_{k+1,1}\dots$ (see \cite{Key:87} and recall $Z_{0}=0$). Moreover, the $(Z_{k,n}(i))_{n\ge 0}$ for $k\ge 0$ and $i\ge 1$ are conditionally independent given $\calU$ and
\begin{equation}\label{dist of Z_{k},n(i)}
\P((Z_{k,n}(i))_{n\ge 0}\in\cdot|(\calU_{k})_{k\ge n}=\vec{u})\ =\ \P((\bar Z_{n})_{n\ge 0}\in\cdot|\calU=\vec{u})
\end{equation}
for $\P(\calU\in\cdot)$-almost all $\vec{u}\in\calP_{1}\times\calP$. As $(\calU_{n})_{n\ge k}\stackrel{d}{=}\calU$ and thus $q((\calU_{n})_{n\ge k})=q(\calU)=1$ a.s., it follows that
\begin{equation*}
\frac{Z_{k,n-k}(i)}{\prod_{j=0}^{n-1}\mu_{\calU_{j}}}\ =\ \frac{1}{\prod_{j=0}^{k-1}\mu_{\calU_{j}}}\frac{Z_{k,n-k}(i)}{\prod_{j=k}^{n-1}\mu_{\calU_{j}}}\ \to\ 0\quad\text{a.s.}
\end{equation*}
as $n\to\infty$ for all $i\ge 1$ and $k\ge 0$. By using these facts in \eqref{Eq.BPREI.distrEquiv}, we now infer for each $m\in\N$ that
\begin{align*}
\frac{Z_{n}}{\prod_{j=0}^{n-1}\mu_{\calU_{j}}}\ =\ \underbrace{\sum_{k=0}^{m}\sum_{i=1}^{\xi_{k}}\frac{Z_{k,n-k}(i)}{\prod_{j=0}^{n-1}\mu_{\calU_{j}}}}_{\to\ 0\text{ a.s.}}\ +\ \frac{1}{\prod_{j=0}^{m-1}\mu_{\calU_{j}}}\underbrace{\sum_{k=m+1}^{n}\sum_{i=1}^{\xi_{k}}\frac{Z_{k,n-k}(i)}{\prod_{j=m}^{n-1}\mu_{\calU_{j}}}}_{=:\ Y_{m,n-m}}\ \stackrel{d}{\to}\ \frac{Z_{\infty}'}{\prod_{j=0}^{m-1}\mu_{\calU_{j}}}
\end{align*}
as $n\to\infty$, where $Z_{\infty}'$ is a copy of $Z_{\infty}$ and independent of $(\mu_{\calU_{0}},\dots,\mu_{\calU_{m-1}})$. To see this, one should observe that
$$ \P(Y_{m,n}\in\cdot|\calU)\ =\ \P(Z_{n-m}\in\cdot|(\calU_{k})_{k\ge m}) $$ and the independence of $Y_{m,n}$ and $(\calU_{0},...,\calU_{m-1})$ for any $m=1,...,n$. The distributional equation just derived, viz.
\begin{equation*}
Z_{\infty}\ \stackrel{d}{=}\ \frac{Z_{\infty}'}{\prod_{j=0}^{m-1}\mu_{\calU_{j}}}
\end{equation*}
for all $m\in\N$, in combination with $Z_{\infty}<\infty$ a.s. and
$$ \prod_{j=0}^{m-1}\mu_{\calU_{j}}\ =\ \exp\left(\sum_{j=0}^{m-1}\log\mu_{\calU_{j}}\right)\ \to\ \infty\quad\text{a.s.} $$
by the strong law of large numbers obviously entails $Z_{\infty}=0$ a.s.

\vspace{.1cm}
For the second implication in \eqref{Eq.BPREI.super.zlogz} suppose now $\P(q(\calU)<1)>0$, which particularly implies $\bar Z_{n}\to\infty$ with positive probability, see \cite{Tanny:77}. But this in combination with \eqref{immigration possible}, \eqref{Eq.BPREI.distrEquiv} and \eqref{dist of Z_{k},n(i)} easily implies $Z_{n}\to\infty$ a.s. Fix $\eps>0$ and choose $\eta>0$ such that
\begin{equation*}
\P(q(\calU)<1-\eta)\ge 1-\eps.
\end{equation*}
For any $k\in\N$, we then find that
\begin{align*}
\P(Z_{\infty}=0|Z_{k},\calU)\ &=\ \P\left(\lim_{n\to\infty}\frac{Z_{n}}{\prod_{i=0}^{n-1}\mu_{\calU_{i}}}=0\bigg|Z_{k},\calU\right)\\
&\le\ \P\left(\lim_{n\to\infty}\sum_{j=1}^{Z_{k}}\frac{\bar Z_{n-k}(j)}{\prod_{i=k}^{n-1}\mu_{\calU_{i}}}=0\bigg|Z_{k},\calU\right)\\
&=\ \P\left(\bar W=0|(\calU_{i})_{i\ge  k}\right)^{Z_{k}}\quad\text{a.s.}
\end{align*}
where $\bar Z_{n}(j)$ describes the offspring in generation $n$ stemming from the $j^{th}$ individual in generation $k$ and thus behaves like the BPRE $\bar Z_{n}$ (modulo a $k$-shift of the environment). Since the population of the BPREI explodes almost surely and $\calU$ consists of iid random variables, we finally conclude
\begin{equation*}
 \P(Z_{\infty}=0)\ \le\ \E\,q((\calU_{i})_{i\ge  k})^{Z_{k}}\ \le\ \E(1-\eta)^{Z_{k}}+\eps\ \stackrel{k\to\infty}{\longrightarrow}\ \eps
\end{equation*}
which proves (a) because $\eps>0$ was arbitrarily chosen.

\vspace{.2cm}
(b) Let $c>0$ and notice that $Z_{n+1}\ge \xi_{n}$ a.s. for all $n\ge 0$ by \eqref{Eq.BPREI.Recursive}. Now use \eqref{standard iid} in combination with $\E\log^{+}\xi_{0}=\infty$ to conclude
\begin{equation*}
\limsup_{n\to\infty}\frac{Z_{n}}{c^{n}}\ \ge\ \limsup_{n\to\infty}\frac{\xi_{n}}{c^{n}}\ =\ \limsup_{n\to\infty}\left(\frac1c\exp\left(\frac{\log\xi_{n}}{n}\right)\right)^{n}\ =\ \infty\quad\text{a.s.}
\end{equation*}
as claimed.\qed
\end{proof}

As a consequence of the proof of the above theorem, we note the following corollary.

\begin{Cor}\label{Cor.supConv}
In all three regimes, it is true that
\begin{equation*}
\limsup_{n\to\infty}\frac{1}{c^{n}}\E\left(Z_{n}|\calF_{0}\right)=0\qquad\text{a.s.}
\end{equation*}
for each $c>1$ such that $\E\log\mu_{\calU_{0}}<\log c$, where $\calF_{0}=\sigma((\xi_{n})_{n\ge 0},\calU)$.
\end{Cor}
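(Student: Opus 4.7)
The plan is to read off an explicit formula for $\E(Z_n|\calF_0)$ from the proof of Theorem \ref{Th.BPREI.SC} and combine it with the strong law of large numbers applied to the iid sequences $(\log\mu_{\calU_i})_{i\ge 0}$ and $(\log^{+}\xi_k)_{k\ge 0}$. The recursion $\E(Z_{n+1}|\calF_0)=\mu_{\calU_n}\E(Z_n|\calF_0)+\xi_n$ derived in that proof holds in all three regimes (since $\mu_{\calU_n},\xi_n<\infty$ a.s.\ inductively yields $\E(Z_n|\calF_0)<\infty$ a.s.), and iteration from $Z_0=0$ gives
\begin{equation*}
\frac{\E(Z_n|\calF_0)}{c^n}\ =\ R_n\,S_n\quad\text{with }R_n:=\frac{\prod_{i=0}^{n-1}\mu_{\calU_i}}{c^n},\ S_n:=\sum_{k=0}^{n-1}\frac{\xi_k}{\prod_{i=0}^k\mu_{\calU_i}},
\end{equation*}
so it suffices to show $R_nS_n\to 0$ a.s.

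Setting $\mu:=\E\log\mu_{\calU_0}$, $\alpha:=\log c-\mu>0$ and $T_m:=\sum_{i=0}^m\log\mu_{\calU_i}$, SLLN yields $T_m/m\to\mu$ a.s., while \eqref{standard iid} applied to $(\log^{+}\xi_k)$ -- which tacitly requires $\E\log^{+}\xi_0<\infty$, as otherwise Theorem \ref{Th.BPREI.SC}(b) already falsifies the claim -- gives $\log^{+}\xi_k/k\to 0$ a.s. For any $\epsilon>0$ there is therefore a random $k_0(\epsilon)<\infty$ such that $T_m\in[m(\mu-\epsilon),m(\mu+\epsilon)]$ and $\log^{+}\xi_k\le k\epsilon$ for all $k,m\ge k_0$. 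Plugging these in gives, a.s.\ for $n$ large,
\begin{equation*}
R_n\ \le\ e^{-n(\alpha-\epsilon)}\qquad\text{and}\qquad\frac{\xi_k}{\prod_{i=0}^k\mu_{\calU_i}}\ \le\ e^{k(2\epsilon-\mu)}\ \text{ for }k\ge k_0.
\end{equation*}

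Splitting $S_n$ into its bounded head $(k<k_0)$ and its geometric-like tail, one obtains $S_n\le C\cdot\max(n,e^{n(2\epsilon-\mu)^+})$ for an a.s.\ finite random constant $C$. Hence $R_nS_n$ is dominated either by $Cne^{-n(\alpha-\epsilon)}$ (when $\mu\ge 2\epsilon$) or by $Ce^{n(3\epsilon-\log c)}$ (when $\mu<2\epsilon$, using the identity $-\alpha-\mu=-\log c$). With the choice $\epsilon:=\tfrac14\min(\alpha,\log c)>0$, both exponents are strictly negative, so $R_nS_n\to 0$ a.s. The main obstacle is the critical/subcritical regime $\mu\le 0$, where $S_n$ itself diverges exponentially and one must balance its growth against the decay of $R_n$: the balance tilts in our favour precisely because the hypothesis supplies \emph{both} $\alpha>0$ (from $\log c>\mu$) and $\log c>0$ (from $c>1$).
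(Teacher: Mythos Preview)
Your argument is correct. Both you and the paper start from the same identity $\E(Z_n|\calF_0)=\prod_{i=0}^{n-1}\mu_{\calU_i}\cdot\sum_{k=0}^{n-1}\xi_k/\prod_{i=0}^{k}\mu_{\calU_i}$, but the treatment of the critical and subcritical regimes differs. The paper bounds the sum by the full series $\sum_{k\ge 0}\xi_k/\prod_{i=0}^{k}\mu_{\calU_i}$, which it has already shown to be a.s.\ finite in the supercritical case, and then disposes of the remaining regimes by a one-line stochastic comparison (replace $\calU$ by a dominating environment $\calU'$ with $\E\log\mu_{\calU_0'}\in(0,\log c)$), omitting the details. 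You instead keep the finite sum and balance its possible exponential growth against the decay of $R_n$ directly, choosing $\epsilon$ so that both relevant exponents $\alpha-\epsilon$ and $\log c-3\epsilon$ stay positive. Your route is more explicit and entirely self-contained, at the price of a short case analysis; the paper's route is shorter but leans on a comparison that is only sketched. Your observation that the statement tacitly requires $\E\log^{+}\xi_0<\infty$ is also correct and worth making explicit: the paper's own proof relies on the finiteness of the series established in Theorem~\ref{Th.BPREI.SC}(a), which in turn used exactly this hypothesis, and without it the argument of Theorem~\ref{Th.BPREI.SC}(b) (which does not actually use supercriticality) shows the conclusion fails.
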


\begin{proof}
Let $c>1$. By \eqref{Eq.BPREI.bendEW}, we have 
\begin{equation*}
\frac{1}{c^{n+1}}\,\E\left(Z_{n+1}|\calF_{0}\right)\ \le\ \left(\prod_{k=0}^{n}\frac{\mu_{\calU_{k}}}{c}\right)\sum_{k\ge 0}\frac{\xi_{k}}{\prod_{i=0}^{k}\mu_{\calU_{i}}}\quad\text{a.s.}
\end{equation*}
If $\E\log\mu_{\mu_{\calU_{0}}}>0$, the proof of Theorem \ref{Th.BPREI.SC} has already shown that the sum on the right side is almost surely finite. Since the $\mu_{\calU_{n}}$, $n\in\N_{0}$, are iid, we further get
\begin{equation*}
 \limsup_{n\to\infty}\prod_{k=0}^{n}\frac{\mu_{\calU_{k}}}{c}\ =\ \limsup_{n\to\infty}\exp\left(\sum_{k=0}^{n}\log\frac{\mu_{\calU_{k}}}{c}\right)\ =\ 0
\end{equation*}
by the law of large numbers, hence
\begin{equation*}
 \limsup_{n\to\infty}\frac{1}{c^{n+1}}\E\left(Z_{n+1}|\calF_{0}\right)=0\quad\text{a.s.}
\end{equation*}
If $\E\log\mu_{\calU_{0}}\le 0$, then the assertion follows by a simple stochastic comparison argument (replace $\calU$ by $\calU'=(\calU_{n}')_{n\ge 0}$ satisfying $\E\log\mu_{\calU_{0}'}\in (0,\log c)$ and use that the assertion is true in the supercritical case). We omit further details.\qed
\end{proof}

\section{Size-biased branching within branching tree}\label{Sec.SBP}

Unlike the size-biased construction used in \cite{AlsGroettrup:15a} to define the ABPRE, which was purely based on the cell tree, the following size-biased version $\BThat$ of the whole BwBP will be obtained by picking a spine (random line) of parasites. Yet, since the spinal parasites are hosted by unique cells, this will again determine a random cell line as well (see Fig. \ref{Fig.SizeBiasedBwBP}), but its properties are different from those of the cell line related to the ABPRE.

\subsubsection*{Construction of the size-biased process}

Let 
\begin{equation*}
\left(\wh X^{(\bullet,\wh T_{n})}_{n}, \wh T_{n}, \wh C_{n}\right),\quad n\ge 0,
\end{equation*}
be iid copies of the random vector $\left(\wh X^{(\bullet,\wh T)}, \wh T, \wh C\right)$ independent of $\Xfam$ and $\Tfam$, where $\wh T,\,\wh C$ take values in $\N$ and $\wh X^{(\bullet,\wh T)}:=(X^{(1,\wh T)},\dots,X^{(\wh T,\wh T)})$ is a vector of random length $\wh T$. These random variables have the the following distributions: For $k\in\N$, $x=(x_{1},\dots,x_{k})\in\N_{0}^{k}$ and $1\le m\le \sum_{j=1}^{k}x_{j}$, we have
\begin{align}
\P(\wh T=k)\ &=\ \frac{p_{k}\sum_{j=1}^{k}\mu_{j,k}}{\gamma},\label{Eq.SizeBiasedCellOffspring}\\
\P\big(\wh X^{(\bullet,\wh T)}=x\big|\wh T=k\big)\ &=\ \frac{\sum_{j=1}^{k}x_{j}}{\sum_{j=1}^{k}\mu_{j,k}}\,\P\big(X^{(\bullet,k)}=x\big),\label{Eq.SizeBiasedParasiteOfspring}
\end{align}
and
\begin{equation}\label{Eq.SizeBiasedChoiceOfParasite}
\P\big(\wh C=m\big|\wh X^{(\bullet,\wh T)}=x,\wh T=k\big)\ =\ \frac{1}{\sum_{j=1}^{k}x_{j}},
\end{equation}
i.e., $\wh C$ is uniformly distributed on $\{1,\dots,\sum_{j=1}^{k}x_{j}\}$ given $\wh X^{(\bullet,\wh T)}=x$ and $\wh T=k$. In particular, 
\begin{equation}\label{Eq.SizeBiasedDistr}
\P\big(\wh X^{(\bullet,\wh T)}=x, \wh T=k,\,\wh C=m\big)\ =\ \frac{p_{k}}{\gamma}\,\P\big(X^{(\bullet,k)}=x\big).
\end{equation}

These random variables determine a random path (spine) through the parasites as depicted in Fig. \ref{Fig.SizeBiasedBwBP} by the following 3-step procedure: 
\begin{description}[\sc Step 3]
\item[\sc Step 1] The root cell $\wh V_{0}=\varnothing$ splits into $\wh T_{0}$ daughter cells.\vspace{.04cm}
\item[\sc Step 2] If $\wh T_{0}=k$, $\wh X^{(\bullet,k)}_{0}=(x_{1},...,x_{k})$ and $x=\sum_{j=1}^{k}x_{j}$, then the  parasite in $\varnothing$ has $x$ descendants of which $x_{j}$ go into daughter cell $j$ for $j=1,...,k$.\vspace{.04cm}
\item[\sc Step 3] The \emph{spinal parasite} of the first generation is picked by $\wh C_{0}$ uniformly at random from the $x$ offspring parasites and the cell hosting it is the spinal cell $\wh V_{1}$ of the first generation.
\end{description}
The procedure is then successively applied to the spinal cell and its spinal parasite of generation $n=1,2,...$ So, being at generation $n$, the spinal cell $\wh V_{n}$ splits into $\wh T_{n}$ daughter cells, $\wh X^{(\bullet,\wh T_{n})}_{n}$ provides the offspring numbers of the associated spinal parasite and $\wh C_{n}$ the spinal parasite in generation $n+1$. All remaining parasites in $\wh V_{n}$ multiply independently with the law of $X^{(\bullet,\wh T_{1})}$. We thus obtain a random cell line $(\wh V_{n})_{n\ge 0}$ with $\wh V_{0}:=\varnothing$ and
\begin{equation*}
\wh V_{n+1}\ :=\ \wh V_{n}\wh U_{n}
\end{equation*}
for $n\ge 0$, where $\wh U_{n}$ denotes the \emph{daughter cell containing the spinal parasite of the next generation}. Since the $\left(\wh X^{(\bullet,\wh T_{n})}_{n}, \wh T_{n}, \wh C_{n}\right)$, $n\ge 0$, are iid, so are the $\wh U_{n}$, $n\ge 0$, and we get from \eqref{Eq.SizeBiasedCellOffspring} - \eqref{Eq.SizeBiasedDistr}
\begin{equation}
\P(\wh T_{0}=k, \wh U_{0}=j)\ =\ \frac{p_{k} \mu_{j,k}}{\gamma}\quad\text{for }\ 1\le j\le k<\infty.
\end{equation}
All parasites and cells not in the spine reproduce with the usual law. Thus, $\wh Z_\varnothing=1$, and the children of each cell and their parasites in the \emph{size-biased BwBP} 
$$ \left(\wh\T_{n},(\wh V_{n},\wh C_{n}),(\wh Z_{\sfv})_{\sf\sfv\in\wh\T_{n}}\right)_{n\ge 0} $$ are given by
\begin{equation*}
\wh N_{\sfv}\ =\
\begin{cases}
\wh T_{n}, &\text{if $\sfv=\wh V_{n}$},\\[1ex]
N_{\sfv}, &\text{if $\sfv\ne \wh V_{n}$},
\end{cases}
\end{equation*}
and
\begin{equation*}
\wh Z_{\sfv j}\ :=\ 
\begin{cases}
\sum_{i=1}^{\wh Z_{\sfv}-1}X^{(j,\wh T_{n})}_{i,\sfv}+\wh X^{(j,\wh T_{n})}_{n}, &\text{if $\sfv=\wh V_{n}$},\\[1ex]
\hfill \sum_{i=1}^{\wh Z_{\sfv}}X^{(j,N_{\sfv})}_{i,\sfv}, &\text{if $\sfv\ne \wh V_{n}$}.
\end{cases}
\end{equation*}
for $\sf\sfv\in\V$ with $|\sfv|=n$ and $j\in\N$. Finally, let $\wh\T$ and $\wh\calZ_{n}$ have the obvious meaning.
\begin{figure}[t]
\centering
\includegraphics[width=11cm]{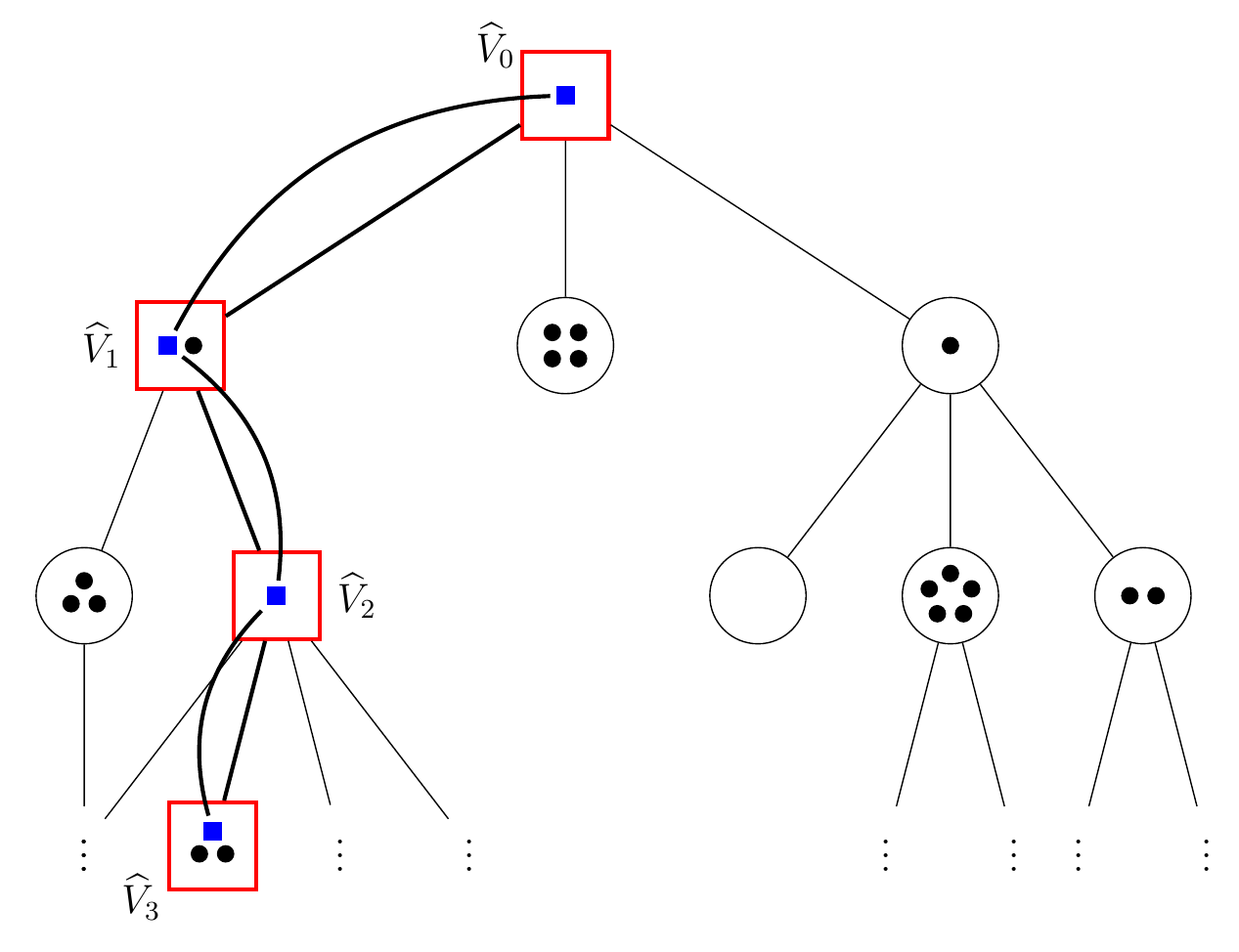}
\caption{A typical realization of a size-biased BwBP. Spinal parasites are shown as $\color{blue}\blacksquare$ and cells hosting these parasites as $\color{red} \square$. All other parasites and cells are shown as $\bullet$ and $\bigcirc$, respectively. Bended edges are used to indicate the line of cells containing the spinal parasites.}
\label{Fig.SizeBiasedBwBP}
\end{figure}

\vspace{.1cm}
It is important to point out that not only the spinal parasites but also those sharing a cell with them have a different offspring distribution as parasites sitting in regular cells. This is caused by the fact that a spinal cell always produces at least one daughter cell wheras regular ones may die. The next lemma provides us with the reproduction distribution of a spinal cell and the parasites it contains.

\begin{Lemma}\label{Lemma.VerteilungSizeBiasedTree}
The conditional distribution of $\big(\wh T_{n},\wh U_{n},(\wh Z_{\wh V_{n}j})_{1\le j\le\wh T_{n}}\big)$ given $\wh Z_{\wh V_{n-1}}=z$, the number of parasites in $\wh V_{n-1}$, equals the distribution of $\big(\wh T_{0},\wh U_{0},(\wh Z_{j})_{1\le j\le\wh T_{0}}\big)$ under $\P_{z}$ for all $n,z\in\N$, and
\begin{equation*}
\P_{z}\big(\wh T_{0}=k,\,\wh U_{0}=l,\,\wh Z_{j}=z_{j}\text{ for }j=1,\ldots,k\big)\ =\ \frac{p_{k}z_{l}}{z\gamma}\,\P_{z}\left(Z_{j}=z_{j}\text{ for }j=1,\ldots,k\right).
\end{equation*}
for all $k\in\N$, $1\le l\le k$, and $(z_{j})_{1\le j\le k}\in\N_{0}^{k}$.
\end{Lemma}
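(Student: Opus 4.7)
My plan is to prove the two parts of the lemma separately.

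For the first assertion, I would exploit the independence structure built into the construction of $\BThat$: the spinal triples $(\wh T_n, \wh X^{(\bullet, \wh T_n)}_n, \wh C_n)$ are iid in $n$ and jointly independent of the family $\Xfam$ governing the non-spinal reproductions. On the event $\{\wh Z_{\wh V_{n-1}} = z\}$, the recursion gives
\begin{equation*}
\wh Z_{\wh V_n j}\ =\ \sum_{i=1}^{z-1} X^{(j, \wh T_n)}_{i, \wh V_{n-1}} + \wh X^{(j, \wh T_n)}_n, \quad j = 1, \ldots, \wh T_n,
\end{equation*}
and the non-spinal summands appearing here are iid copies of $X^{(\bullet, k)}$, independent of both the spinal triple at step $n$ and the past up to time $n-1$. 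Hence the conditional law of $(\wh T_n, \wh U_n, (\wh Z_{\wh V_n j})_j)$ given $\wh Z_{\wh V_{n-1}} = z$ depends only on $z$ and coincides with the $\P_z$-law of $(\wh T_0, \wh U_0, (\wh Z_j)_{1 \le j \le \wh T_0})$, since the latter is generated by exactly the same construction started from a single cell hosting $z$ parasites.

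For the explicit formula, I would compute under $\P_z$ by conditioning on the spinal triple at step $0$ and using the independence of the spinal and non-spinal variables. Writing $S_m^{(k)} := \sum_{i=1}^m X^{(\bullet, k)}_i$ for a sum of $m$ iid copies of $X^{(\bullet, k)}$, and summing \eqref{Eq.SizeBiasedDistr} over the $x_l$ indices $m$ that place the selected spinal offspring in daughter cell $l$, one obtains $\P(\wh T_0 = k, \wh X^{(\bullet, k)}_0 = x, \wh U_0 = l) = (p_k x_l / \gamma)\,\P(X^{(\bullet, k)} = x)$, so that
\begin{equation*}
\P_z(\wh T_0 = k, \wh U_0 = l, \wh Z_j = z_j\ \forall j)\ =\ \frac{p_k}{\gamma} \sum_x x_l\,\P(X^{(\bullet, k)} = x)\,\P(S_{z-1}^{(k)} = (z_j - x_j)).
\end{equation*}

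The main step is then the size-biasing identity
\begin{equation*}
\sum_x x_l\,\P(X^{(\bullet, k)} = x)\,\P(S_{z-1}^{(k)} = (z_j - x_j))\ =\ \frac{z_l}{z}\,\P(S_z^{(k)} = (z_j)),
\end{equation*}
which I would derive from exchangeability of the $z$ iid summands in $S_z^{(k)}$: symmetry forces $\E[X^{(l,k)}_i \mid S_z^{(k)} = (z_j)] = z_l/z$ for every $i$, and specializing to $i = z$ and expanding $\E[X^{(l,k)}_z\1_{\{S_z^{(k)} = (z_j)\}}]$ by conditioning on $X^{(\bullet, k)}_z = x$ reproduces precisely the left-hand side. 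Substituting yields $\P_z(\wh T_0 = k, \wh U_0 = l, \wh Z_j = z_j\ \forall j) = (p_k z_l / (z\gamma))\,\P(S_z^{(k)} = (z_j))$, and recognizing $\P(S_z^{(k)} = (z_j))$ as $\P_z(Z_j = z_j,\, j = 1, \ldots, k)$---the distribution of the daughter parasite counts in the original BwBP under $\P_z$ given the root cell has $k$ daughters---produces the claimed formula. The exchangeability identity is the conceptual core of the argument and the step I expect to require the most care; everything else is bookkeeping from the definitions introduced in Section \ref{Sec.SBP}.
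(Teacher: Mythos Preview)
Your proposal is correct and follows essentially the same route as the paper's proof: both decompose the probability by conditioning on the spinal vector $\wh X^{(\bullet,k)}_{0}$, invoke \eqref{Eq.SizeBiasedDistr} to extract the factor $p_{k}x_{l}/\gamma$, and then reduce the remaining convolution sum via the exchangeability identity $\E\big(X^{(l,k)}_{i}\,\big|\,S_{z}^{(k)}=(z_{j})\big)=z_{l}/z$, which the paper phrases as the random-walk fact $\E(X_{1}\mid S_{n})=S_{n}/n$. Your identification of this step as the conceptual core is exactly right; the only cosmetic difference is that the paper passes through the intermediate conditional expectation $\E\big(X^{(l,k)}_{z,\varnothing}\,\big|\,\sum_{i=1}^{z}X^{(j,k)}_{i,\varnothing}=z_{j}\ \forall j\big)$ explicitly before invoking the identity.
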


\begin{proof}
Let $k\in\N$, $l\in\{1,...,k\}$, $(z_{j})_{1\le j\le k}\in\N_{0}^{k}$ and $z\in\N$. Using the independence of $\wh X^{(\bullet,\wh T_{0})}$ and $\Xfam$, we then obtain
\begin{align*}
&\P_{z}\big(\wh T_{0}=k,\,\wh U_{0}=l,\,(\wh Z_{j})_{1\le j\le k}=(z_{j})_{1\le j\le k}\big)\\
&=\ \P_{z}\left(\wh T_{0}=k,\,\wh U_{0}=l,\,\sum_{i=1}^{z-1}X^{(j,k)}_{i,\varnothing}+\wh X^{(j,k)}_{0}=z_{j}\text{ for }j=1,\ldots,k\right)\\
&=\ \sum_{x_{j}\le z_{j}}\P\big(\wh X^{(\bullet,k)}_{0}=(x_{1},\dots,x_{k}),\,\wh T_{0}=k,\,\wh U_{0}=l\big)\\
&\hspace{5cm}\times\ \P\left(\,\sum_{i=1}^{z-1}X^{(j,k)}_{i,\varnothing}=z_{j}-x_{j}\text{ for }j=1,\ldots,k\right)\\
&=\ \frac{p_{k}}{\gamma}\sum_{x_{j}\le z_{j}}x_{l}\,\P\left(X^{(\bullet,k)}=(x_{1},\dots,x_{k})\right)\,\P\left(\,\sum_{i=1}^{z-1}X^{(j,k)}_{i,\varnothing}=z_{j}-x_{j}\text{ for }j=1,\ldots,k\right)\\
&=\ \frac{p_{k}}{\gamma}\sum_{x_{j}\le z_{j}}x_{l}\,\P\left(\sum_{i=1}^{z}X^{(j,k)}_{i,\varnothing}=
z_{j},\,X^{(j,k)}_{z,\varnothing}=x_{j}\text{ for }j=1,\ldots, k\right)\\
&=\ \frac{p_{k}}{\gamma}\,\E\left(X^{(l,k)}_{z,\varnothing}\bigg|\sum_{i=1}^{z}X^{(j,k)}_{i,\varnothing}=z_{j}\text{ for }j=1,\ldots, k\right)\P_{z}\big(Z_{j}=z_{j}\text{ for }j=1,\ldots,k\big)\\
&=\ \frac{p_{k}}{\gamma}\,\E\left(X^{(l,k)}_{z,\varnothing}\bigg|\sum_{i=1}^{z}X^{(l,k)}_{i,\varnothing}=z_{l}\right)\P_{z}\big(Z_{j}=z_{j}\text{ for }j=1,\ldots,k\big),
\end{align*}
where \eqref{Eq.SizeBiasedDistr} was used for the third equality.
Since a random walk $(S_{n})_{n\ge 0}$ with $S_{0}=0$ and iid increments $X_{1},X_{2},\ldots$ satisfies $\E(X_{1}|S_{n})=S_{n}/n$ a.s., we conclude the desired result.\qed
\end{proof}

\subsubsection*{Dichotomy of the size-biased process}

The next (common) step is to establish, by drawing on a measure-theoretic result due to Durrett \cite{Durrett:10}, equivalent conditions on the size-biased BwBP for the martingale limit $W=\lim_{n\to\infty}\gamma^{-n}\calZ_{n}$ to be finite. In the following, the dagger symbol $\dagger$ is used for formal convenience to indicate that a node $\sf\sfv\in\V$ is absent in the cell tree and thus called a dead cell. Put $\calN:=\N_{0}\cup\{\dagger\}$ and define
\begin{equation*}
\sfZ_{\sfv}\ :=\ Z_{\sfv}\1_{\{\sf\sfv\in\T\}}+\dagger\1_{\{\sfv\notin\T\}}\quad\text{and}\quad\wh\sfZ_{\sfv}\ :=\ \wh Z_{\sfv}\1_{\{\sf\sfv\in\wh \T\}}+\dagger\1_{\{\sfv\notin\wh \T\}}.
\end{equation*}
Then $\sfZ_{\sfv}=0$ means that $\sfv$ is a living cell with no parasites, whereas $\sfZ_{\sfv}=\dagger$ indicates that $\sfv$ is not present in $\T$ and thus a dead cell. We put
\begin{equation*}
\BT := \sfZfam\quad\text{and}\quad\BThat := (\wh \sfZ_{\sfv})_{\sfv\in\V}
\end{equation*}
and call these objects the \emph{branching within branching tree (BwBT)} and the \emph{size-biased BwBT}, respectively. It is important to note that previously introduced random variables of the BwBP and its size-biased counterpart, in particular $\calT_{n}$, can be defined as measurable functions of $\BT$ or $\wh\BT$.

\vspace{.1cm}
Let $\S:=\calN^{\V}$ be the set of BwBT's which assigns a nonnegative integer or $\dagger$ to each node of $\V$. Put $\V_{\le n}:=\{\sfv:|\sfv|\le n\}$ and $\V_{n}:=\{\sf\sfv\in\V:|\sfv|=n\}$ for $n\in\N_{0}$. Let $\calS$ be the standard $\sigma$-field on $\S$ generated by the projections on the components, and let $\calS_{n}\subseteq\calS$ denote the sub-$\sigma$-field which is induced by the projections on components in $\V_{\le n}$. Clearly, $(\calS_{n})_{n\ge 0}$ is a filtration in $\calS$, and $\BT$ and $\BThat$ are both $(\S,\calS)$-measurable. For $n\in\N_{0}$, we denote by $\BT_{n}$ and $\BThat_{n}$ the BwBT and size-biased BwBT up to level $n$, respectively, i.e. $\BT_{n} := (\sfZ_{\sfv})_{\sf\sfv\in\V_{\le n}}$ and $\BThat_{n} := (\wh\sfZ_{\sfv})_{\sf\sfv\in\V_{\le n}}$. Further, let $z_{n}:\S\to\N_{0}$ denote the number of parasites in the $n^{th}$ generation of a host-parasite tree, thence $\calZ_{n}=z_{n}(\BT)$ and $\wh\calZ_{n}=z_{n}(\BThat)$, and put
\begin{equation}\label{Eq.SBP.def.w}
w_{n}:\S\to[0,\infty),\quad w_{n}(\tau)\ :=\ \frac{1}{\gamma^{\,n}}z_{n}(\tau)
\end{equation}
for $n\in\N_{0}$. We further set $w:=\limsup_{n\to\infty}w_{n}$. Thus $w_{n}$ is $\calS_{n}$-measurable by definition, and we have the representations
\begin{equation*}
 W_{n} = w_{n}\circ\BT\quad\text{and}\quad \wh W_{n}=w_{n}\circ\BThat.
\end{equation*}
As a consequence of the following lemma, the uniform integrability of $\Wproc$ is directly linked to the almost sure finiteness of $\wh W$. 

\begin{Lemma}\label{Lemma.ZshgSpinalTree}\
\begin{description}[(b)]
\item[(a)] For all $n\in\N_{0}$, $\vec{z}=(z_{\sfv})_{\sf\sfv\in\V_{\le n}}\in\calN^{\V_{\le n}}$ and $\sfu\in\V_{n}$
\begin{equation*}
\P\big(\BThat_{n}=\vec{z}, \wh V_{n}=\sfu\big)\ =\ \frac{z_{\sfu}}{\gamma^{\,n}}\,\P\left(\BT_{n}=\vec{z}\right).
\end{equation*}
\item[(b)] Let $n\in\N_{0}$ and $h:(\S,\calS_{n})\to(\R,\mathcal B)$ be a measurable and non-negative (or bounded) function. Then,
\begin{equation*}
\E\big(h(\BThat\,)\big)\ =\ \E\left(W_{n}\,h(\BT\,)\right),
\end{equation*}
in particular, for $A\in\calS_{n}$,
\begin{equation}\label{eq:wh Q(A) identity}
\wh Q(A)\ =\ \E\left(W_{n}\1_{\{\BT\in A\}}\right) \ =\ \int_A w_{n}(\tau)\ Q(d\tau),
\end{equation}
where $Q(\cdot):=\P\left(\BT\in\cdot\right)$ and $\wh Q(\cdot):=\P\big(\BThat\in\cdot\big)$.
\item[(c)] The following dichotomy holds true:
\begin{enumerate}
\item[(i)] $\P(\wh W<\infty)=1\quad\Leftrightarrow\quad \E W=1$,\vspace{.05cm}
\item[(ii)] $\P(\wh W=\infty)=1\quad\Leftrightarrow\quad \P(W=0)=1$.
\end{enumerate}
\end{description}
\end{Lemma}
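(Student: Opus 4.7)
My plan is to prove the three parts in order, with (a) as the workhorse distributional identity, from which (b) follows by a direct summation and (c) by a classical martingale change-of-measure dichotomy.

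For (a) I would argue by induction on $n$. The base case $n=0$ is trivial since $\wh V_{0}=\varnothing$ and $\wh\sfZ_{\varnothing}=\sfZ_{\varnothing}=1$. For the step $n-1\to n$, write $\sfu=\sfu'l$ with $\sfu'\in\V_{n-1}$ and $l\in\N$, and decompose the event $\{\BThat_{n}=\vec z,\wh V_{n}=\sfu\}$ as the conjunction of $\{\BThat_{n-1}=\vec z|_{\V_{\le n-1}},\wh V_{n-1}=\sfu'\}$ (to be handled by the induction hypothesis) with the reproductions of all level-$(n-1)$ cells producing the prescribed children and parasite counts. Every cell $\sfv\in\V_{n-1}$ with $\sfv\neq\sfu'$ reproduces independently according to the unbiased BwBP law (this is the construction of $\BThat$ at non-spinal nodes), so the contributions of these cells multiply out to give exactly their joint probability under the unbiased law. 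The spinal cell $\sfu'$, on the other hand, reproduces according to Lemma~\ref{Lemma.VerteilungSizeBiasedTree}, which tells us that the joint probability of $(\wh T_{n-1},\wh U_{n-1},(\wh Z_{\sfu'j})_{1\le j\le\wh T_{n-1}})=(k,l,(z_{\sfu'j})_{1\le j\le k})$, conditional on $\wh Z_{\sfu'}=z_{\sfu'}$, is the unbiased probability of the same event multiplied by the size-bias factor $z_{\sfu}/(z_{\sfu'}\gamma)$. Combining this factor with the induction hypothesis factor $z_{\sfu'}/\gamma^{n-1}$ produces $z_{\sfu}/\gamma^{n}$, and the independent non-spinal reproductions reassemble into the full BwBP probability $\P(\BT_{n}=\vec z)$, which is exactly the claim.

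Part (b) is then essentially a calculation. Summing the identity of (a) over $\sfu\in\V_{n}$ and using $\sum_{\sfu\in\V_{n}}z_{\sfu}=z_{n}(\vec z)=\gamma^{n}w_{n}(\vec z)$ (with the convention that $\dagger$ contributes $0$, noting that $\wh V_{n}$ is always an alive node so that only terms with $z_\sfu\in\N_0$ contribute) gives $\P(\BThat_{n}=\vec z)=w_{n}(\vec z)\P(\BT_{n}=\vec z)$. A standard monotone-class argument (indicators, simple functions, monotone limits) then upgrades this to $\E h(\BThat\,)=\E(W_{n}h(\BT\,))$ for every non-negative or bounded $\calS_{n}$-measurable $h$, and taking $h=\1_{A}$ for $A\in\calS_{n}$ yields \eqref{eq:wh Q(A) identity}.

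For (c), the identity from (b) says precisely that the restrictions $\wh Q|_{\calS_{n}}$ are given by the $Q$-martingale densities $w_{n}$, so $\wh Q$ is the Kolmogorov-consistent extension to $\calS_{\infty}$ of the martingale change of measure associated with the non-negative mean-one $Q$-martingale $(w_{n})_{n\ge 0}$. The classical Lebesgue-decomposition dichotomy for such a change of measure, cited in the lemma as Durrett \cite{Durrett:10}, then gives: $\wh Q\ll Q$ on $\calS_{\infty}$ iff $(w_{n})$ is $Q$-uniformly integrable iff $\E_{Q}w=1$, in which case $\wh Q(w<\infty)=1$; and $\wh Q\perp Q$ on $\calS_{\infty}$ iff $w=0$ $Q$-a.s., in which case $\wh Q(w=\infty)=1$. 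Translating back via $W=w\circ\BT$ and $\wh W=w\circ\BThat$, and recalling Theorem~\ref{Th.W0.zlogz} (so that $\E W=1$ is equivalent to the martingale being uniformly integrable, and $\P(W>0)>0$ iff $\E W=1$, with the complementary case forcing $W=0$ a.s.\ by the $0$--$1$ law hidden in $\E W\in\{0,1\}$), gives (c)(i) and (c)(ii).

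The main obstacle is the bookkeeping in the induction step of (a): the size-biased reproduction at $\sfu'$ involves three intertwined ingredients (daughter count $\wh T$, spinal-daughter choice $\wh U$ driven by $\wh C$, and parasite offspring vector $\wh X^{(\bullet,\wh T)}$), and one must isolate these from the independent ordinary reproductions of the remaining cells and verify that Lemma~\ref{Lemma.VerteilungSizeBiasedTree} collapses all three size-biases into the single ratio $z_{\sfu}/(z_{\sfu'}\gamma)$. Once (a) is in hand, (b) is a one-line summation and (c) is a direct invocation of a well-known martingale dichotomy.
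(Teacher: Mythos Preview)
Your argument for (a) and (b) is essentially identical to the paper's: induction on $n$ using the branching property at non-spinal cells and Lemma~\ref{Lemma.VerteilungSizeBiasedTree} at the spinal cell to pick up the factor $z_{\sfu}/(z_{\sfu'}\gamma)$, then summation over $\sfu\in\V_{n}$ to obtain the density $w_{n}$.

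For (c) your route is correct but takes an unnecessary detour through Theorem~\ref{Th.W0.zlogz}. The Lebesgue decomposition you cite from Durrett already gives, for every $A\in\calS$,
\[
\wh Q(A)\ =\ \int_{A} w\ dQ\ +\ \wh Q(A\cap\{w=\infty\}),
\]
and taking $A=\S$ yields the single identity $\E W=1-\wh Q(w=\infty)$, from which both equivalences in (c) follow immediately (for (ii), note $\E W=0\Leftrightarrow W=0$ a.s.\ since $W\ge 0$). This is exactly how the paper argues, and it avoids invoking the fact $\E W\in\{0,1\}$ from Theorem~\ref{Th.W0.zlogz}, whose proof---though logically independent of this lemma---appears only later in the paper.
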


\begin{proof}
(a) Since the statement for $n=0$ follows from our definitions, let $n\in\N$ and $\sfu=\sfu' \sfu_{n}$ for some $\sfu'\in\V_{n-1}$ and $\sfu_{n}\in\N$. 
Then by induction, the branching property and Lemma \ref{Lemma.VerteilungSizeBiasedTree}, we get for each $(z_{\sfv})_{\sf\sfv\in\V_{\le n}}\in\calN^{\V_{\le n}}$
\begin{align*}
&\P\big(\BThat_{n}=(z_{\sfv})_{\sf\sfv\in\V_{\le n}}, \wh V_{n}=u\big)\\
&=\ \P\big(\BThat_{n-1}=(z_{\sfv})_{\sf\sfv\in\V_{\le n-1}}, \wh V_{n-1}=\sfu'\big)\\
&\hspace{1cm}\times\P\big((\wh \sfZ_{\sfv})_{\sf\sfv\in\V_{n}}=(z_{\sfv})_{\sf\sfv\in\V_{n}},\wh V_{n}=\sfu\big|\BThat_{n-1}=(z_{\sfv})_{\sf\sfv\in\V_{\le n-1}}, \wh V_{n-1}=\sfu'\big)\\
&=\ \frac{z_{\sfu'}}{\gamma^{\,n-1}}\P\left(\BT_{n-1}=(z_{\sfv})_{\sf\sfv\in\V_{\le n-1}}\right)\prod_{|\sfv|=n-1,\sfv\ne \sfu'}\P_{z_{\sfv}}\left((\sfZ_{\sfv'})_{\sfv'\in\N}=(z_{\sfv\sfv'})_{\sfv'\in\N}\right)\\
&\hspace{1cm}\times\P_{z_{\sfu'}}\big((\wh\sfZ_{\sfv'})_{\sfv'\in\N}=(z_{\sfu' \sfv'})_{\sfv'\in\N}, \wh V_{1}=\sfu_{n}\big)\\
&=\ \frac{z_{\sfu'}}{\gamma^{\,n-1}}\P\left(\BT_{n-1}=(z_{\sfv})_{\sf\sfv\in\V_{\le n-1}}\right)\prod_{|\sfv|=n-1,\sfv\ne \sfu'}\,\P_{z_{\sfv}}\left((\sfZ_{\sfv'})=(z_{\sfv\sfv'})_{\sfv'\in\N}\right)\\
&\hspace{1cm}\times\frac{z_{\sfu}}{z_{\sfu'}\gamma}\,\P_{z_{\sfu'}}\big((\sfZ_{\sfv'})=(z_{\sfu' \sfv'})_{\sfv'\in\N}\big)\\
&=\ \frac{z_{\sfu}}{\gamma^{\,n}}\,\P\left(\BT_{n}=(z_{\sfv})_{\sf\sfv\in\V_{\le n}}\right).
\end{align*}

(b) It suffices to show \eqref{eq:wh Q(A) identity}. Summation over $\sfu\in\V_{n}$ in (a) yields
\begin{equation*}
\P\left(\BThat_{n}=(z_{\sfv})_{\sf\sfv\in\V_{\le n}}\right)\ =\ \frac{\sum_{|\sfu|=n}z_{\sfu}}{\gamma^{\,n}}\P\left(\BT_{n}=(z_{\sfv})_{\sf\sfv\in\V_{\le n}}\right)
\end{equation*}
for all $(z_{\sfv})_{\sf\sfv\in\V_{\le n}}\in\calN^{\V_{\le n}}$. Therefore, we infer for all $A\in\calS_{n}$
\begin{align*}
\wh Q(A)\ =\ \P(\BThat\in A)\ &=\ \P(\BThat_{n}\in A\cap\calN^{\V_{\le n}})\\
&=\ \int_{A\cap\calN^{\V_{\le n}}}\frac{\sum_{|\sfu|=n}z_{\sfu}}{\gamma^{\,n}}\ \P\left(\BT_{n}\in d(z_{\sfv})_{\sf\sfv\in\V_{\le n}}\right)\\
&=\ \int_{A}\frac{\sum_{|\sfu|=n}z_{\sfu}}{\gamma^{\,n}}\ \P\left(\BT\in d(z_{\sfv})_{\sfv\in\V}\right)\\
&=\ \int_A w_{n}\ dQ \ =\ \E\left(W_{n}\1_{\{\BT\in A\}}\right).
\end{align*}

(c) Part (b) and \cite[Theorem 5.3.3]{Durrett:10} imply for all $A\in\calS$
\begin{equation*}
\wh Q(A)\ =\ \int_A w\ dQ\ +\ \wh Q\left(A\cap\{w=\infty\}\right),
\end{equation*}
which in turn leads to
\begin{equation*}
\E W\ =\ \int_{\S}w\ dQ\ =\ 1 - \wh Q\left(w=\infty\right).
\end{equation*}
The asserted dichotomy now follows.\qed
\end{proof}

\begin{Rem}\label{Rem.ZlogZinSizeBiased}\rm
Since $\log^{+}(w_{1}(\cdot))$ is $\calS_{1}$-$\mathcal B$-measurable and nonnegative, the above theorem provides us with $\E\log^{+}\wh W_{1}=\E W_{1}\log^{+}W_{1}$,
which in turn yields
\begin{equation*}
\E\calZ_{1}\log\calZ_{1}<\infty\quad\text{iff}\quad\E\log\wh\calZ_{1}<\infty.
\end{equation*}
\end{Rem}

\subsubsection*{The process of parasites along the spine}

We take a closer look at the \emph{process $(\Zspine{n})_{n\ge 0}$ of parasites along the spine} and its recursive formula
\begin{equation}\label{Eq.SpinalBPREI.Rekursiv}
 \Zspine{n+1} \ =\ \sum_{i=1}^{\Zspine{n-1}-1}X^{(\wh U_{n}, \wh T_{n})}_{i,\wh V_{n}}\, +\, \wh X^{(\wh U_{n}, \wh T_{n})}_{n}, \quad n\in\N_{0}.
\end{equation}
Observe that all but the spinal parasite in a spinal cell multiply with the same distribution, whereas the \emph{spinal parasite} produces offspring according to a different law. We can figure the spinal parasite to be outside the cell and its progeny as immigrants of the spinal cell of the next generation. Then all remaining parasites in the spinal cell reproduce with the same distribution and we thus see that $\Zspineproc$ behaves like a BPREI.

\begin{Theorem}\label{Th.ABPREI}
 Let $\assBPREI$ be a BPREI in iid random environment $\Delta=\assBPREIes$ taking values in $\{\calL((X^{(j,k)}, \wh X^{(j,k)}-1)|(\wh U, \wh T)=(j,k)):1\le j\le k<\infty\}$ and such that
\begin{equation*}
 \P\left(\Delta_{0}=\calL\big((X^{(j,k)}, \wh X^{(j,k)}-1)|(\wh U, \wh T)=(j,k)\big)\right)=\frac{p_{k}\mu_{j,k}}{\gamma}
\end{equation*}
for all $1\le j\le k<\infty$ and $\wh Z_{0}'\stackrel{d}{=}\wh Z_{\wh V_{0}}$. Then the law of $\ZspineprocBPREI$ equals the law of the BPREI.
\end{Theorem}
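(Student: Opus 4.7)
The plan is to verify that the process $\ZspineprocBPREI$ satisfies the BPREI recursion \eqref{Eq.BPREI.Recursive} with the prescribed environmental sequence, offspring variables, and immigration variables, and has the required conditional independence structure. First, subtracting $1$ from both sides of \eqref{Eq.SpinalBPREI.Rekursiv} gives
\begin{equation*}
\wh Z_{\wh V_{n+1}}-1\ =\ \sum_{i=1}^{\wh Z_{\wh V_{n}}-1}X^{(\wh U_{n},\wh T_{n})}_{i,\wh V_{n}}\ +\ \bigl(\wh X^{(\wh U_{n},\wh T_{n})}_{n}-1\bigr),\quad n\ge 0,
\end{equation*}
which has the form of \eqref{Eq.BPREI.Recursive} with offspring variables $X_{n,i}:=X^{(\wh U_{n},\wh T_{n})}_{i,\wh V_{n}}$ and immigrants $\xi_{n}:=\wh X^{(\wh U_{n},\wh T_{n})}_{n}-1$. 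Since $\wh Z_{\wh V_{0}}=1$ a.s., the initial value equals zero, consistent with the BPREI convention.

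Second, I would identify the environmental variable as $\Delta_{n}:=(\wh U_{n},\wh T_{n})$. By construction the triples $(\wh X^{(\bullet,\wh T_{n})}_{n},\wh T_{n},\wh C_{n})$ for $n\ge 0$ are iid, and $\wh U_{n}$ is a measurable function of such a triple, whence $(\Delta_{n})_{n\ge 0}$ is iid. Combining \eqref{Eq.SizeBiasedCellOffspring}, \eqref{Eq.SizeBiasedParasiteOfspring} and \eqref{Eq.SizeBiasedChoiceOfParasite}, a short calculation yields
\begin{equation*}
\P\bigl(\wh T_{0}=k,\,\wh U_{0}=j\bigr)\ =\ \frac{p_{k}\mu_{j,k}}{\gamma}\qquad(1\le j\le k<\infty),
\end{equation*}
matching the prescribed distribution of $\Delta_{0}$.

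Third, I would verify the conditional laws given $\Delta$. Since $\Xfam$ is iid and independent of both $\Tfam$ and all spinal quantities, and the cells $(\wh V_{n})_{n\ge 0}$ are pairwise distinct along the spine, conditional on $\Delta$ and on $(\wh V_{n})_{n\ge 0}$ the variables $X^{(\wh U_{n},\wh T_{n})}_{i,\wh V_{n}}$ ($i\ge 1$, $n\ge 0$) are independent with $X^{(\wh U_{n},\wh T_{n})}_{i,\wh V_{n}}\stackrel{d}{=}X^{(j,k)}$ on $\{\Delta_{n}=(j,k)\}$. For the immigrants, the key computation is a direct application of Bayes' rule to \eqref{Eq.SizeBiasedCellOffspring}--\eqref{Eq.SizeBiasedChoiceOfParasite}, yielding
\begin{equation*}
\P\bigl(\wh X^{(j,k)}=a\bigm|\wh U=j,\,\wh T=k\bigr)\ =\ \frac{a}{\mu_{j,k}}\,\P\bigl(X^{(j,k)}=a\bigr),
\end{equation*}
the $j$-th marginal of the size-biased law of $X^{(\bullet,k)}$, so that the conditional law of $\wh X^{(\wh U_{n},\wh T_{n})}_{n}-1$ given $\Delta_{n}=(j,k)$ coincides with the immigration marginal of the $\Delta$-environment. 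The independence of $\wh X^{(\bullet,\wh T_{n})}_{n}$ from $\Xfam$ and across $n$ then provides the remaining conditional independence of offspring and immigrants given $\Delta$, completing the identification with a BPREI. The only genuinely nontrivial step is the Bayes-rule computation above; everything else reduces to bookkeeping with the size-biased construction of Section~\ref{Sec.SBP}.
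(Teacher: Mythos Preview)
Your argument is correct. You verify directly that the spinal recursion, after subtracting $1$, has the structural form of a BPREI: an iid environmental sequence $(\wh U_n,\wh T_n)_{n\ge 0}$ with the stated law, and, conditional on it, independent offspring families with law $\calL(X^{(j,k)})$ and independent immigrants with the size-biased marginal law $\P(\wh X^{(j,k)}=a\mid \wh U=j,\wh T=k)=\frac{a}{\mu_{j,k}}\P(X^{(j,k)}=a)$. Note that your extra conditioning on $(\wh V_n)_{n\ge 0}$ is redundant, since $\wh V_n=\wh U_0\cdots\wh U_{n-1}$ is already measurable with respect to $\Delta$; what matters is your observation that the spinal cells are pairwise distinct, so the offspring families $X^{(\cdot,\cdot)}_{i,\wh V_n}$ for different $n$ do not overlap.

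The paper takes a slightly different route: rather than identifying the full BPREI structure, it argues that both $(\wh Z_{\wh V_n}-1)_{n\ge 0}$ and $(\wh Z_n')_{n\ge 0}$ are Markov chains and then checks that their one-step conditional generating functions given the current state and the current environment $(\wh U_n,\wh T_n)$ coincide, namely $\E(s^{X^{(j,k)}})^{z}\,\E(s^{\wh X^{(j,k)}-1}\mid(\wh U,\wh T)=(j,k))$. Your structural verification is more explicit about the conditional independence and computes the immigration law via Bayes' rule, whereas the paper's transition-kernel check is more compact and avoids making that law explicit. Both arguments rest on the same recursion \eqref{Eq.SpinalBPREI.Rekursiv} and the same independence assumptions, so the difference is one of presentation rather than substance.
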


\begin{proof}
It suffices to verify that, for each $n\ge 0$, the conditional laws of $\wh Z_{n+1}'$ given $\wh Z_{n+1}',\Delta_{n}$ and $\wh Z_{\wh V_{n+1}}$ given $\wh Z_{\wh V_{n}},\Delta_{n}$ coincide, for both sequences are Markovian. It follows from the definition of $\assBPREI$ and its recursive structure (see \eqref{Eq.BPREI.Recursive}) that
\begin{align*}
&\E\left(s^{\wh Z^\prime_{n+1}}\Big|\wh Z_{n}'=z, \Delta_{n}=\calL\big((X^{(j,k)}, \wh X^{(j,k)}-1)|(\wh U, \wh T)=(j,k)\big)\right)\\
&\hspace{4.5cm}=\ \E\left(s^{X^{(j,k)}}\right)^{z}\E\left(s^{\wh X^{(j,k)}-1}\Big|(\wh U,\wh T)=(j,k)\right)
\end{align*}
for all $n,z\in\N_{0}$ and $1\le j\le k<\infty$. Further, \eqref{Eq.SpinalBPREI.Rekursiv} and the iid assumption of the involved random variables implies
\begin{align*}
\E\bigg(s^{\Zspine{n+1}-1}&\Big|\Zspine{n}-1=z,(\wh U_{n},\wh T_{n})=(j,k)\bigg)\\
&=\ \E\left(s^{\sum_{i=1}^{z}X^{(j,k)}_{i,\varnothing}+\wh X^{(j,k)}_{n}-1}\Big|(\wh U_{n},\wh T_{n})=(j,k)\right)\\
&=\ \E\left(s^{X^{(j,k)}}\right)^z\E\left(s^{\wh X^{(j,k)}-1}\Big|(\wh U,\wh T)=(j,k)\right)
\end{align*}
for all $1\le j\le k<\infty$, hence
\begin{equation*}
\E\left(s^{\Zspine{n+1}-1}\Big|\Zspine{n}-1=z\right)\ =\ \E\left(s^{\wh Z^\prime_{n+1}}\Big|\wh Z_{n}'=z\right)
\end{equation*}
for all $n\in\N_{0}$ and $z\in\N_{0}$.\qed
\end{proof}

We call the BPREI $\assBPREI$ in iid random environment $\Delta$ the \emph{associated branching process in random environment with immigration (ABPREI)} and denote by 
\begin{equation*}
\wh g_{\Delta_{n}}(s) =\ \sum_{j\le k}\E\left(s^{X^{(j,k)}}\right)\1_{\left\{\Delta_{n}=\calL((X^{(j,k)}, \wh X^{(j,k)}-1)|(\wh U, \wh T)=(j,k))\right\}}
\end{equation*}
the generating function of the first marginal distribution given by $\Delta_{n}$. The process is called
\emph{supercritical, critical} or \emph{subcritical} if $\E\log\wh g_{\Delta_{0}}'(1)>0, =0$ or $<0$, respectively.

\begin{Rem}\label{Remark.ABPRE.ABPREI}\rm
There is a strong connection between the behaviors of the ABPRE and the ABPREI. Namely, if $\mu_{j,k}\ne 1$ and $p_{k}>0$ for at least one pair $(j,k)$, $1\le j\le k$, then (see \cite[Section 2]{AlsGroettrup:15a} for the definitions of different subcritical subregimes of the ABPRE)
\begin{equation*}
\ \text{ABPREI}\ \begin{cases}
		\text{subcritical},\\
		\text{critical},\\
		\text{supercritical},
		\end{cases}
\text{iff}\quad
\text{ABPRE}\ \begin{cases}
		\text{strongly subcritical},\\
		\text{intermediate subcritical},\\
		\text{weakly subcritical or non-subcritical}.
		\end{cases}
\end{equation*}
This can be easily assessed by a look at the equation
\begin{equation}\label{Eq.ABPREIvsABPRE}
 \E\log\wh g_{\Delta_{0}}'(1)\ =\ \sum_{1\le j\le k<\infty}\frac{p_{k}\mu_{j,k}}{\gamma}\log\mu_{j,k}\ =\ \frac{\nu}{\gamma}\,\E g_{\Lambda_{0}}'(1)\log g_{\Lambda_{0}}'(1),
\end{equation}
where we refer to \cite{AlsGroettrup:15a} for the definition of the generating function $g_{\Lambda_{0}}(s)$. Since the function $x\mapsto x\log x$ is strictly convex and $g_{\Lambda_{0}}'(1)\ne 1$ with positive probability, Jensen's inequality provides us with
\begin{equation*}
\E g_{\Lambda_{0}}'(1)\log g_{\Lambda_{0}}'(1)\ >\ \E g_{\Lambda_{0}}'(1)\log\E g_{\Lambda_{0}}'(1)\ >\ \E g_{\Lambda_{0}}'(1)\,\E\log g_{\Lambda_{0}}'(1),
\end{equation*}
which combined with \eqref{Eq.ABPREIvsABPRE} shows the assertion.
\end{Rem}

\section{Proofs of the main results}

\begin{proof}[of Theorem \ref{Th.calT.superMG}]
Recalling the definition of $(\calF_{n})_{n\ge 0}$ from Section \ref{Sec.MainResults} and noting the independence of $N_{\sfv}$ and $\calF_{n}$ for each $\sfv\in\T_{n}$, the supermartingale property and thus a.s. convergence to an integrable random variable $L$ of $(\nu^{-n}\calT_{n}^{*})_{n\ge 0}$ follows from
\begin{align*}
\E\left(\calT_{n+1}^{*}|\calF_{n}\right)\ &=\ \sum_{\sfv\in\T_{n}^{*}}\E\left(\sum_{j=1}^{N_{\sfv}}\1_{\{Z_{\sfv j}>0\}}\Big|\calF_{n}\right)\\
&\le\ \sum_{\sfv\in\T_{n}^{*}}\E\left(N_{\sfv}\Big|\calF_{n}\right)\ =\ \sum_{\sfv\in\T_{n}^{*}}\E\left(N_{\sfv}\right)\ =\ \nu\calT_{n}^{*}\quad\text{a.s.}
\end{align*}
for each $n\ge 0$.

\vspace{.1cm}
If $\nu>1$ and $\E N\log N<\infty$, then $(\nu^{-n}\calT_{n}^{*})_{n\ge 0}$ is even uniformly integrable because the obvious majorant $(\nu^{-n}\calT_{n})_{n\ge 0}$ froms a normalized supercritical GWP satisfying the $(Z\log Z)$-condition of the Kesten-Stigum theorem (see \cite[Section I.10]{Athreya+Ney:72}). Consequently,
\begin{align}\label{Eq.EL.gi}
\E L\ =\ \lim_{n\to\infty}\E\left(\frac{\calT_{n}^{*}}{\nu^{n}}\right)\ =\ \lim_{n\to\infty}\P(Z_{n}'>0),
\end{align}
where the second equality follows from \eqref{Eq.BPRE.EG}. The theory of BPRE (see e.g. \cite{AthreyaKarlin:71a} or \cite[Prop. 2.3]{Alsmeyer:10b}) now implies in this case that $L=0$ a.s. if and only if condition (iii) holds true. If, on the other hand, $\nu\le 1$, then $\calT_{n}^{*}\le\calT_{n}=0$ eventually, and if $\E N\log N=\infty$, then the Kesten-Stigum theorem implies 
\begin{equation*}
L \ =\ \lim_{n\to\infty}\frac{\calT_{n}^{*}}{\nu^{n}} \ \le\  \lim_{n\to\infty}\frac{\calT_{n}}{\nu^{n}} \ =\ 0\quad\text{a.s.}
\end{equation*}
In both cases we obtain $L=0$ a.s., which completes the proof of $(a)$.

\vspace{.2cm}
(b) Defining $\tau_{n} = \inf\{m\in\N:\calT_{m}^{*}\ge n\}$,
we find that, for any $n\in\N$,
\begin{align*}
\P(L=0)\ &\le\ \P(L=0|\tau_{n}<\infty)+\P(\tau_{n}=\infty)\\
&=\ \P\left(\lim_{m\to\infty}\,\frac{1}{\nu^{\tau_{n}}}\sum_{\sfv\in\T_{\tau_{n}}^{*}}\nu^{-(m-\tau_{n})}\calT_{m}^{*}(\sfv)=0\,\bigg|\,\tau_{n}<\infty\right)\ +\ \P(\tau_{n}=\infty)\\
&\le\ \P\left(\bigcap_{\sfv\in\calT_{\tau_{n}}^{*}}\big\{\nu^{-m}\calT_{m}^{*}(\sfv)\to 0\big\}\bigg|\tau_{n}<\infty\right)\ +\ \P(\tau_{n}=\infty)\\
&\le\ \P\left(\nu^{-m}\calT_{m}^{*}\to 0\right)^{n}\ +\ \P(\tau_{n}=\infty)\\
&=\ \P(L=0)^{n}\ +\ \P(\tau_{n}=\infty),
\end{align*}
where $\calT_{m}^{*}(\sfv)$ denotes the number of contaminated cells in the $m^{th}$ generation of the subtree rooted in cell $\sfv\in\T_{n}^{*}$. Since $\P(L=0)<1$ and $\Ext=\{\sup_{n\ge 1}\calT_{n}^{*}<\infty\}$ a.s. by Theorem 3.2(a) in our companion paper \cite{AlsGroettrup:15a}, we arrive at the conclusion
\begin{equation*}
\P(L=0)\ \le\ \lim_{n\to\infty}\P(\tau_{n}=\infty)\ =\ \P\left(\sup_{n\ge 1}\calT_{n}^{*}<\infty\right)\ =\ \P(\Ext),
\end{equation*}
which in combination with $\Ext\subseteq\{L=0\}$ a.s. proves the assertion.\qed
\end{proof}

\begin{proof}[of Theorem \ref{Th.calT.growthRate}]
For each $\eps>0$, use Markov's inequality it obtain
\begin{equation*}
 \sum_{n=0}^\infty\P\left(\left(\frac{\calT_{n}^{*}}{\E\calT_{n}^{*}}\right)^{1/n}\ge 1+\eps\right)\ \le\ \sum_{n=0}^\infty\frac{1}{(1+\eps)^{n}}\ <\ \infty,
\end{equation*}
whence by the Borel-Cantelli lemma
\begin{equation*}
\limsup_{n\to\infty}\left(\frac{\calT_{n}^{*}}{\E\calT_{n}^{*}}\right)^{1/n}\ \le\ 1\quad\text{a.s.}
\end{equation*}
But from \eqref{Eq.BPRE.EG}, \eqref{Eq.def.rho} and with Jensen's inequality, we infer
\begin{equation*}
(\E\calT_{n}^{*})^{1/n}\ =\ \nu\,\P(Z_{n}'>0)^{1/n}\ \to\ \nu\rho,
\end{equation*}
as $n\to\infty$ and thus
\begin{equation*}
\limsup_{n\to\infty}\frac{1}{n}\log\calT_{n}^{*}\ \le\ \log\nu\rho\quad\text{a.s.}
\end{equation*}

Left with the proof of $\liminf_{n\to\infty}n^{-1}\log\calT_{n}^{*}\ge\log\nu\rho$ a.s., we have $\calT_{n}^{*}\to\infty$ a.s. on $\Surv$ by another appeal to \cite[Theorem 3.2(a)]{AlsGroettrup:15a}. Therefore, by Fatou's lemma,
\begin{equation}\label{Eq.calT.HS.EWinfty}
 \infty\ =\ \E\liminf_{n\to\infty}\calT_{n}^{*}\ \le\ \liminf_{n\to\infty}\E\calT_{n}^{*}.
\end{equation}
We proceed with the following construction of a sequence $(\T_{m,n}^{*})_{n\ge 0}$ of sets of contaminated cells for each $m\in\N$. 
\begin{description}[\sc Step 3]
\item[\sc Step 1] Put $\T_{m,0}^{*}:=\{\varnothing\}$ and suppose the root cell to host a single parasite.\vspace{.05cm}
\item[\sc Step 2] Let $\T_{m,1}^{*}:=\T_{m}^{*}$ be the set of contaminated cells in generation $m$.
From any of these cells, pick an arbitrary parasite and let $\T_{m,2}^{*}$ be the set of contaminated cells in generation $2m$ containing at least one of their descendants.
\vspace{.05cm}
\item[\sc Step 3] Recursively define $\T_{m,n+1}^{*}$ as in Step 2 with the help of $\T_{m,n}^{*}$ for each $n=2,3,...$
\end{description}
Then we obviously have
\begin{equation*}
 \calT_{mn}^{*}\ \ge\ S_{m,n}\ :=\ \#\T_{m,n}^{*}\quad\P\text{-a.s.}
\end{equation*}
for all $n\in\N_{0}$, and $(S_{m,n})_{n\ge 0}$ forms a simple GWP with offspring law $\P(\calT_{m}^{*}\in\cdot)$. It is supercritical for sufficiently large $m$ by \eqref{Eq.calT.HS.EWinfty}. Let $\Surv_{m}$ denote the set of survival of $(S_{m,n})_{n\ge 0}$, Obviously, $\Surv_{m}\subseteq\Surv$ for all $m\in\N$.  Fix $m_{0}$ such that $\P(\Surv_{m_{0}})>0$ and note that $\Surv_{m_{0}}\subseteq\Surv_{2m_{0}}\subseteq\dots\subseteq\Surv$ a.s. because a GWP considered only at the points in time $l\N_{0}$ for any fixed $l\in\N$ is also a GWP and survives if the original one does. Using these inclusions and the branching property of a GWP, we find
\begin{align*}
\P(\Surv_{km_{0}})\ &=\ \sum_{j\ge 1}\P(\calT_{km_{0}}^{*}=j)(1-\P(\Surv_{km_{0}}^{c})^{j})\\
&\ge\ (1-\P(\Surv_{m_{0}}^c)^s)\P(\calT^*_{km_{0}}\ge s)
\end{align*}
for all $s,k\in\N$. Hence,
\begin{equation*}
 \P(\Surv)\ \ge\ \P\left(\bigcup_{k\ge 0}\Surv_{km_{0}}\right)\ =\ \lim_{k\to\infty}\P(\Surv_{km_{0}})\ \ge\ (1-\P(\Surv_{m_{0}}^{c})^{s})\,\P(\Surv)
\end{equation*}
for all $s\in\N$, and since $\P(\Surv_{m_{0}})>0$ is assumed, we arrive at
\begin{equation}\label{Eq.calT.HS.surv}
\bigcup_{k\ge 0}\Surv_{km_{0}}=\Surv\quad\text{a.s.} 
\end{equation}
by letting $s$ tend to infinity in the above inequality.

\vspace{.1cm}
Fixing now any $m\in m_{0}\N$ and then $k_{n}\in\N$, $l_{n}\in\{0,\dots,m-1\}$ such that $n=k_{n}m+l_{n}$, it follows on $\Surv_{m}$ that
\begin{equation*}
\calT_{n}^{*}\ \ge\ \sum_{\sfv\in\T_{m,n}^{*}}\calT^*_{l_{n},\sfv}\quad\text{a.s.},
\end{equation*}
for all $n\in\N$, where $\calT^*_{l_{n},\sfv}$ denotes the number of contaminated cells in generation $n$ stemming from $\sfv\in\T_{m,n}^{*}$. Using Jensen's inequality, this yields on $\Surv_{m}$
\begin{align*}
\log\calT_{n}^{*}\ =\ \log^{+}\calT_{n}^{*}\ \ge\ \frac{1}{S_{m,k_{n}}}\sum_{\sfv\in\T_{m,n}^{*}}\log^{+}\calT^*_{l_{n},\sfv}\ +\ \log^{+}S_{m,k_{n}}\ \ge\ \log^{+} S_{m,k_{n}}\quad\text{a.s.},
\end{align*}
and the classical theory of GWP's (for example the Heyde-Seneta theorem \cite[Theorem 5.1 in Chapter II]{Asmussen+Hering:83}) provides us with
\begin{align*}
\liminf_{n\to\infty}\frac{1}{n}\log\calT_{n}^{*}\ &\ge\ \liminf_{n\to\infty}\frac{1}{n}\log^{+} S_{m,k_{n}}\\ &=\ \frac1m\log\E\calT_{m}^{*}\ =\ \log\nu\ +\ \frac1m\log\P(Z_{m}'>0)\quad\text{a.s.}
\end{align*}
on $\Surv_{m}$, where \eqref{Eq.BPRE.EG} has been used for the last equality. As $m=km_{0}$ for some $k\in\N$, we finally obtain by using \eqref{Eq.def.rho} and recalling \eqref{Eq.calT.HS.surv} that
\begin{equation*}
\liminf_{n\to\infty}\frac{1}{n}\log\calT_{n}^{*}\ \ge\ \log\nu\ +\ \lim_{k\to\infty}\frac1{km_{0}}\log\P(Z^\prime_{km_{0}}>0)\ =\ \log\nu\rho\quad\text{a.s.}
\end{equation*}
on $\Surv$. This proves the theorem.\qed
\end{proof}

\begin{proof}[of Theorem \ref{Th.calT.HS}]
W.l.o.g. we may assume that $\E N\log N=\infty$, for otherwise $(\nu^{n})_{n\ge 0}$ provides a suitable norming sequence by Theorem \ref{Th.calT.superMG}. For each $a>0$ such that $\E N\1_{\{N\le a\}}>1$, we define
\begin{equation}\label{Eq.calT.HS.def.ca}
c_{0}(a):=a\quad\text{and}\quad c_{n}(a):=c_{n-1}(a)\,\E\left(N\1_{\left\{N\le c_{n-1}(a)\right\}}\right)\text{ for }n\ge 1,
\end{equation}
which is obviously uniquely determined by $a$. Let $(c_{n})_{n\ge 0}$ be a particular choice which is kept fixed herafter. Recall that $\calTproc$ is a supercritical GWP with reproduction law $\calL(N)$ and mean $\nu$. The classical theory of GWP's (see e.g. \cite[Chapter II]{Asmussen+Hering:83}) tells us that any $(c_{n}(a))_{n\ge 0}$ provides a suitable Heyde-Seneta norming for $\calTproc$ with
\begin{equation}\label{Eq.calT.HS.atoinfty}
\lim_{n\to\infty}\frac{c_{n+1}(a)}{c_{n}(a)}\ =\ \nu\quad\text{and}\quad\lim_{a\to\infty}\,\frac{1}{a}\sum_{n\ge 0}c_{n}(a)\,\P(N>c_{n}(a))\ =\ 0.
\end{equation}
Moreover, all $(c_{n}(a))_{n\ge 0}$ are asymptotically equivalent in the sense that there exist constants $\eta(a)\in(0,\infty)$ such that $c_{n}(a)/c_{n}\to \eta(a)$ as $n\to\infty$.

\vspace{.1cm}
For $n\in\N_{0}$ and $\sfv\in\V$ with $|\sfv|=n$, put 
\begin{equation}\label{Eq.calT.HS.Tva}
  N_{\sfv}(a):=N_{\sfv}\1_{\{N_{\sfv}\le c_{n}(a)\}},
\end{equation}
and let $\T_{n}^{*}(a)$, $\calT_{n}^{*}(a)$ and $\calT_{n}(a)$ be the obvious variables in a BwBP with underlying cell process given by $(N_{\sfv}(a))_{\sfv\in\V}$. Classical theory asserts that the process $(c_{n}^{-1}(a)\calT_{n}(a))_{n\ge 0}$, is a $L^2$-bounded martingale (see e.g. \cite[Theorem 2 on p. 9]{Athreya+Ney:72}). As in the proof of Theorem \ref{Th.calT.superMG}, we derive for $n\in\N_{0}$
\begin{align*}
\E\left(\calT^*_{n+1}(a)|\calF_{n}\right)\ &=\ \sum_{\sfv\in\T_{n}^{*}(a)}\E\left(\sum_{j=1}^{N_{\sfv}(a)}\1_{\{Z_{\sfv j}>0\}}\ \big|\ \calF_{n}\right)\\
&\le\ (\E N\1_{\{N\le c_{n}(a)\}})\calT_{n}^{*}(a)\ =\ \frac{c_{n+1}(a)}{c_{n}(a)}\calT_{n}^{*}(a)
\end{align*}
a.s.
Hence, $(c_{n}^{-1}(a)\calT_{n}^{*}(a))_{n\ge 0}$ forms a positive supermartingale with $\E\calT_{n}^{*}(a)\le c_{n}(a)/a$, and since the obvious majorant $(c_{n}^{-1}(a)\calT_{n}(a))_{n\ge 0}$ is $L^2$-bounded, there is an almost surely finite random variable $L^{*}(a)$ such that
\begin{equation}\label{Eq.calT.HS.conv(a)}
\frac{\calT_{n}^{*}(a)}{c_{n}(a)}\to L^{*}(a)\quad\text{a.s. and in $L^1$}
\end{equation}
as $n\to\infty$. The rest of the proof splits into several parts.

\vspace{.2cm}
(1) \textsc{Convergence of $\calT_{n}^{*}/c_{n}$}\\
With calculations as in the proof of \cite[Prop. 1]{BigginsSouza:93}, we obtain
\begin{align*}
&\P(\calT_{n}^{*}(a)\ne\calT_{n}^{*}\ \text{for some}\ n\in\N_{0})\\
&=\ \sum_{n\ge 1}\P(\calT_{1}^{*}(a)=\calT_{1}^{*},\dots,\calT_{n-1}^{*}(a)=\calT_{n-1}^{*}, \calT_{n}^{*}(a)\ne\calT_{n}^{*})\\
&\le\ \sum_{n\ge 1}\sum_{k\ge 0}\P(\calT_{n-1}^{*}(a)=\calT_{n-1}^{*}=k, \calT_{n}^{*}(a)\ne\calT_{n}^{*})\\
&\le\ \sum_{n\ge 1}\P(N>c_{n-1}(a))\sum_{k\ge 0}k\,\P(\calT_{n-1}^{*}(a)=k)\\
&\le\ \sum_{n\ge 1}\P(N>c_{n-1}(a))\,\E\calT_{n-1}^{*}(a)\\
&\le\ \frac{1}{a}\sum_{n\ge 1} c_{n-1}(a)\,\P(N>c_{n-1}(a))\ \to~0\quad\text{as $a\to\infty$},
\end{align*}
where the convergence follows from \eqref{Eq.calT.HS.atoinfty}. Hence, by \eqref{Eq.calT.HS.conv(a)}, we infer for almost every $\omega\in\Omega$ the existence of an $a_{0}$ such that for all $a\ge a_{0}$
\begin{equation}\label{Eq.calT.HS.limit}
\frac{\calT_{n}^{*}(\omega)}{c_{n}}\ =\ \frac{c_{n}(a)}{c_{n}}\frac{\calT_{n}^{*}(a)(\omega)}{c_{n}(a)}\ \to\ \eta(a)L^{*}(a)(\omega)
\end{equation}
where $\eta(a)\in(0,\infty)$ should be recalled. Hence, $c_{n}^{-1}\calT_{n}^{*}$ converges a.s. to a random variable $L^{*}$.

\vspace{.2cm}
(2) \textsc{$\P(L^{*}>0)>0$}\\
In view of \eqref{Eq.calT.HS.limit}, it suffices to verify $\P(L^{*}(a)>0)>0$ for some $a>0$.  Let $\Lambda(a)=(\Lambda_{n}(a))_{n\ge 0}$ be a sequence of independent random variables taking values in the set of probability measures on $\N_{0}$ such that
\begin{equation}\label{Eq.calT.HS.distrL}
 \P\left(\Lambda_{n}(a)=\calL(X^{(j,k)})\right)\ =\ \frac{p_{k}}{\E\left(N\1_{\left\{N\le c_{n}(a)\right\}}\right)}\ =\ \frac{c_{n}(a)}{c_{n+1}(a)}\,p_{k}
\end{equation}
for all $n\in\N_{0}$ and $1\le j\le k\le c_{n}(a)$.
Let further $(Z_{n}'(a))_{n\ge 0}$  be a branching process in random environment $\Lambda(a)$, and let $g_{\Lambda_{n}(a)}$ denote the random generating function of the individuals in the $n^{th}$ generation. Recall that $\assBPRE$ is the ABPRE with environmental sequence $\Lambda$. Clearly,
\begin{equation*}
\P(Z^\prime_{n}(a)>0|\Lambda(a)=\lambda)\ =\ \P(Z^\prime_{n}>0|\Lambda=\lambda)
\end{equation*}
 as well as
\begin{equation*}
\P(\Lambda_{0}(a)=\lambda_{0},\dots,\Lambda_{n}(a)=\lambda_{n})\ =\ \frac{1}{c_{n}(a)}\prod_{k=0}^{n}p_{t_{k}}
\end{equation*}
for any sequence of probability measures $\lambda=(\lambda_{i})_{i\ge 0}$ with $\lambda_{i}=\calL(X^{(j_{i},k_{i})})$ and $j_{i}\le k_{i}\le c_{i}(a)$ for each $i\in\N_{0}$. Hence, by a straightforward adjustment of the calculations in the proof of Prop. 2.2 in \cite{AlsGroettrup:15a}, we obtain
\begin{equation}\label{Eq.calT.HS.BPRE.rel}
 \P(Z_{n}'(a)>0)\ =\ c_{n}^{-1}(a)\,\E\calT_{n}^{*}(a)
\end{equation}
for any $n\in\N_{0}$, and since $c_{n}^{-1}(a)\calT_{n}^{*}(a)\to L^{*}(a)$ in $L^{1}$, we obtain
\begin{equation*}
\E L^{*}(a)\ =\ \lim_{n\to\infty}\P(Z_{n}'(a)>0).
\end{equation*}

For $\lambda=\calL(X^{(j,k)})$ and $m>0$ let 
\begin{equation*}
g_{\lambda,m}(s) \ =\  \sum_{k=0}^{m-1}s^{k}\P(X^{(j,k)}=k)\ +\ s^{m}\,\P(X^{(j,k)}\ge m)
\end{equation*}
be the generating function of the truncated random variable $X^{(j,k)}\wedge m$. As truncation reduces the reproduction, obviously
\begin{equation*}
 \E L^{*}(a)\ =\ \lim_{n\to\infty}\P(Z_{n}'(a)>0)\ \ge\ \lim_{n\to\infty}\P(Z_{n,m}'(a)>0),
\end{equation*}
where $(Z_{n,m}'(a))_{n\ge 0}$ is the branching process with environmental sequence $\Lambda(a)$ and truncated reproduction laws. The truncation further ensures 
$$ \sup_{n\ge 0}g_{\Lambda_{n}(a),m}''(1)/g^\prime_{\Lambda_{n}(a),m}(1)<\infty\quad\text{a.s.} $$
whence, by Theorem 1 in \cite{Agresti:75},
\begin{equation}\label{Eq.calT.HS.Agresti}
\lim_{n\to\infty}\P(Z_{n,m}'(a)>0)>0\qquad\text{if}\quad\sum_{n=0}^\infty\left(\prod_{i=0}^{n+1}g_{\Lambda_{i}(a),m}'(1)\right)^{-1}<\infty\quad\text{a.s.}
\end{equation}

Due to the given assumptions, Theorem 2.1 in \cite{Alsmeyer:10b} provides us with the existence of a constant $m>0$ such that
\begin{equation*}
 0\ <\ \E\log g_{\Lambda_{0},m}'(1)\ <\ \infty.
\end{equation*}
A look at \eqref{Eq.calT.HS.distrL} shows that
\begin{equation*}
\P\left(\Lambda_{n}(a)=\calL(X^{(j,k)})\right)\ =\ \frac{c_{n}(a)}{c_{n+1}(a)}p_{k}\ \stackrel{n\to\infty}{\longrightarrow}\ \frac{p_{k}}{\nu}\ =\ \P\left(\Lambda_{0}=\calL(X^{(j,k)})\right)
\end{equation*}
so that, by making use of \eqref{Eq.calT.HS.atoinfty},
\begin{align*}
 \lim_{n\to\infty}\E\log g_{\Lambda_{n}'(a),m}(1)\ &=\ \lim_{n\to\infty}\sum_{1\le j\le k\le c_{n}(a)}\frac{c_{n}(a)}{c_{n+1}(a)}p_{k}\log\E(X^{(j,k)}\wedge m)\\
&=\ \sum_{1\le j\le k<\infty}\frac{p_{k}}{\nu}\log\E(X^{(j,k)}\wedge m)\\
&=\ \E\log g_{\Lambda_{0},m}'(1).
\end{align*}
Moreover, for any $x>0$,
\begin{align*}
\P\left(\log^{\pm} g_{\Lambda_{n}(a),m}'(1)>x\right)\ &=\ \sum_{\substack{1\le j\le k\le c_{n}(a),\\ \log^{\pm} \mu_{j,k}>x}}\frac{c_{n}(a)}{c_{n+1}(a)}p_{k}\ 
\le\ \frac{a\nu}{c_{1}(a)}\sum_{\substack{1\le j\le k<\infty,\\ \log^{\pm} \mu_{j,k}>x}}\frac{p_{k}}{\nu}\\ &=\ \frac{a\nu}{c_{1}(a)}\,\P\left(\log^{\pm} g_{\Lambda_{0},m}'(1)>x\right)
\end{align*}
and therefore an extension of the law of large numbers (see \cite[Theorem 2.19]{Hall+Heyde:80}) ensures the existence of an a.s. finite random variable $M$ such that
\begin{equation*}
\frac{1}{n}\sum_{k=0}^{n-1}\log g_{\Lambda_{k}(a),m}'(1)\ \ge\ \frac{1}{2}\,\E\log g_{\Lambda_{0},m}'(1)\ >\ 0\quad\text{for all $n\ge M$}.
\end{equation*}
But from this, we finally deduce
\begin{align*}
\sum_{n\ge 0}\prod_{i=0}^{n+1}\frac{1}{g_{\Lambda_{i}(a),m}'(1)}\ &=\ \sum_{n=0}^{M-1}\prod_{i=0}^{n+1}\frac{1}{g_{\Lambda_{i}(a),m}'(1)}\ +\ \sum_{n=G}^\infty\exp\left(-\sum_{i=0}^{n+1}\log g_{\Lambda_{i}(a),m}'(1)\right)\\
&\le\ \sum_{n=0}^{M-1}\prod_{i=0}^{n+1}\frac{1}{g_{\Lambda_{i}(a),m}'(1)}\ +\ \sum_{n\ge M}\left[\exp\left(-\frac{1}{2}\E\log g_{\Lambda_{0},m}'(1)\right)\right]^{n+1}\\
&<\ \infty\quad\text{a.s.}
\end{align*}
and thereupon $\E L^{*}(a)>0$ by an appeal to \eqref{Eq.calT.HS.Agresti}.

\vspace{.2cm}
\textsc{$L^{*}$ vanishes only on} $\Ext$\\
We adapt the proof of Theorem \ref{Th.calT.superMG}(b) and set $\tau_{n}:=\inf\{m\in\N_{0}|\calT_{m}^{*}\ge n\}$ for each $n\in\N$. Then
\begin{align*}
\P(L^{*}=0)\ &\le\ \P(L^{*}=0|\tau_{n}<\infty)+\P(\tau_{n}=\infty)\\
&=\ \P\left(\lim_{m\to\infty}\frac{c_m}{c_{m+\tau_{n}}}\sum_{\sfv\in\T^*_{\tau_{n}}}c^{-1}_{m}\calT_{m}^{*}(\sfv)=0\,\bigg|\,\tau_{n}<\infty\right)\ +\ \P(\tau_{n}=\infty)\\
&\le\ \P\left(\calT_{m}^{*}/c_{m}\to 0\right)^{n}+\P(\tau_{n}=\infty)\\
&=\ \P(L^{*}=0)^{n}\ +\ \P(\tau_{n}=\infty),
\end{align*}
where \eqref{Eq.calT.HS.atoinfty} entered in the penultimate inequality. As $\P(L^{*}=0)<1$, the proof is completed by letting $n$ tend to infinity and an appeal to \cite[Theorem 3.3]{AlsGroettrup:15a}.\qed
\end{proof}

\begin{proof}[of Theorem \ref{Th.W0.zlogz}]
The implications ``$(iv)\Rightarrow(iii)\Rightarrow(ii)\Rightarrow(i)$'' follow directly from standard martingale theory so that it remains to argue that $\P(W>0)>0$ implies $\E\left(\sup_{n\ge 0}W_{n}\right)<\infty$. Modulo minor modifications the subsequent proof follows the arguments of \cite[Lemma 2]{Biggins:79} and \cite[Lemma 2.6 in Chapter II]{Asmussen+Hering:83}, and we estimate the tail probabilities of $\sup_{n\ge 0}W_{n}$.

Let $\P(W>0)>0$. Assuming the existence of constants $a,b>0$ such that
\begin{equation}\label{Eq.W>delta.lowerbound}
\P\left(W>at\right)\ \ge\ b\,\P\left(\sup_{n\ge 0}\,W_{n}>t\right)
\end{equation}
for all $t\in[1,\infty)$, it follows by a standard computation that
\begin{align*}
\E\sup_{n\ge 0}\,W_{n}\ \le\ 1\ +\ \frac{\E W}{ab}\ <\ \infty
\end{align*}
which proves the implication ``$(i)\Rightarrow(iv)$''. So we must only verify \eqref{Eq.W>delta.lowerbound} to complete the proof of the theorem.

\vspace{.2cm}
\textsc{Proof of \eqref{Eq.W>delta.lowerbound}}: Clearly, $\P(W>0)>0$ implies $\E W>0$, and for each $c\in(0,\E W)$ we can find some $w\ge c$ such that $\E(W\wedge w)\ge c$. Fix $t\in[1,\infty)$ and define
\begin{equation*}
E_{n}:=\left\{W_{n}>t,\ \sup_{0\le k<n}W_{k}\le t\right\}
\end{equation*}
for $n\in\N_{0}$.
Then, for any $a>0$,
\begin{equation}\label{Eq.W>delta.lowerestimate}
\P(W>at)\ \ge\ \P\left(W>at,\ \sup_{n\ge 0}\,W_{n}>t\right)\ =\ \sum_{n\ge 0}\P(W>at|E_{n})\,\P(E_{n}).
\end{equation}
For $\sfv\in\V$ and $n\in\N_{0}$ let $\calZ_{n}^{(\sfv)}$ denote the number of parasites in the $n^{th}$ generation of the subtree with root cell $\sfv$, the latter containing $Z_{\sfv}$ parasites. Since $\Wproc$ is a martingale (under any $\P_z$), we obtain the almost sure convergence of $\gamma^{-n}\calZ_{n}^{(\sfv)}$ conditioned under $Z_{\sfv}$ and denote its limit by $W^{(\sfv)}$. For all $n\in\N_{0}$, we then have the representation
\begin{equation*}
W\ =\ \frac{1}{\gamma^{\,n}}\lim_{k\to\infty}\sum_{\sfv\in\T_{n}^{*}}\frac{\calZ_{k}^{(\sfv)}}{\gamma^{\,k}}\ =\ \frac{1}{\gamma^{\,n}}\sum_{\sfv\in\T_{n}^{*}}W^{(\sfv)}\quad\text{a.s.}
\end{equation*}
Consequently,
\begin{align}
\P(W>at| E_{n})\ &=\ \P\left(\frac{1}{\gamma^{\,n}}\sum_{\sfv\in\T_{n}^{*}}W^{(\sfv)}>at\,\bigg|\,E_{n}\right)\nonumber\\
&=\ \P\left(\frac{1}{\gamma^{\,n} W_{n}}\sum_{\sfv\in\T_{n}^{*}}W^{(\sfv)}>\frac{at}{W_{n}}\,\bigg|\,E_{n}\right)\nonumber\\
&\ge\ \P\left(\frac{1}{\calZ_{n}}\sum_{\sfv\in\T_{n}^{*}}W^{(\sfv)}>a\,\bigg|\,E_{n}\right)\nonumber\\
&\ge\ \P\left(\frac{1}{\calZ_{n}}\sum_{\sfv\in\T_{n}^{*}}(W^{(\sfv)}\wedge Z_{\sfv}w)>a\,\bigg|\,E_{n}\right)\nonumber\\
&=\ \P(E_{n})^{-1}\int_{E_{n}}\P\left(\frac{1}{\calZ_{n}}\sum_{\sfv\in\T_{n}^{*}}(W^{(\sfv)}\wedge Z_{\sfv}w)>a\,\bigg|\,\calF_{n}\right)d\P.\label{Eq.W>delta.lowerestimate2}
\end{align}
For $Z_\varnothing=z\in\N_{0}$ let $\calZ_{k,j}$ denote the number of parasites in generation $k\in\N_{0}$ stemming from the ancestor parasite $j\in\{1,\dots,z\}$. If any of these ancestors has at most $w\gamma^{\,k}$ in generation $k$, then the total number of offspring is at most $zw\gamma^{\,k}$, i.e.
\begin{equation*}
\sum_{j=1}^{z}\big(\calZ_{k,j}\wedge w\gamma^{\,k}\big)\ \le\ \left(\sum_{j=1}^{z}\calZ_{k,j}\right)\wedge zw\gamma^{\,k}\quad\P_{z}\text{-a.s.}
\end{equation*}
As a consequence,
\begin{align*}
 \E_{z}(W\wedge zw)\ &=\  \E_{z}\left(\lim_{k\to\infty}\left(\frac{1}{\gamma^{\,k}}\sum_{j=1}^{z}\calZ_{k,j}\right)\wedge zw\right)\\
&\ge\ \E_{z}\left(\sum_{j=1}^{z}\left(\lim_{k\to\infty}\frac{1}{\gamma^{\,k}}\calZ_{k,j}\wedge w\right)\right)\ =\ z\E(W\wedge w)\ \ge\ zc
\end{align*}
for all $z\in\N_{0}$, which in turn implies
\begin{align*}
 \E\left(\frac{1}{\calZ_{n}}\sum_{\sfv\in\T_{n}^{*}}\left(W^{(\sfv)}\wedge Z_{\sfv}w\right)\bigg|\calF_{n}\right)\ &=\ \frac{1}{\calZ_{n}}\sum_{\sfv\in\T_{n}^{*}}\sum_{z=1}^\infty\E_{z}\left(W\wedge zw\right)\1_{\{Z_{\sfv}=z\}}\\
&\ge\ \frac{1}{\calZ_{n}}\sum_{\sfv\in\T_{n}^{*}}Z_{\sfv}\,c\ =\ c\quad\text{a.s.}
\end{align*}
for all $n\in\N_{0}$. Let us put $W_{n}(w):=\frac{1}{\calZ_{n}}\sum_{\sfv\in\T_{n}^{*}}(W^{(\sfv)}\wedge Z_{\sfv}w)$ and note that $W_{n}(w)\le w$ a.s. for all $n\in\N_{0}$. Then it follows for all $a\in(0,c)$ that
\begin{align*}
c\ &\le\ \E\left(W_{n}(w)|\calF_{n}\right)\ =\ \int_{0}^{w}\P\left(W_{n}(w)>x|\calF_{n}\right)\ dx\\
&\le\ a\ +\ \int_{a}^{w}\P\left(W_{n}(w)>x|\calF_{n}\right)\ dx\ \le\ a+(w-a)\P\left(W_{n}(w)>a|\calF_{n}\right)
\end{align*}
and thus
\begin{equation*}
\P\left(W_{n}(w)>a|\calF_{n}\right)\ \ge\ \frac{c-a}{w-a}\quad\text{a.s.}
\end{equation*}
Plugging this inequality into \eqref{Eq.W>delta.lowerestimate2} with $a:=c/2$ and setting 
$b:=c/(2w-c)$, we obtain
\begin{equation*}
\P(W>at|E_{n})\ \ge\ b
\end{equation*}
for all $n\in\N_{0}$ and $t\in[1,\infty)$, and in combination with \eqref{Eq.W>delta.lowerestimate} this finally yields
\begin{equation*}
\P(W>at)\ \ge\ \sum_{n\ge 0}\P(W>at| E_{n})\P(E_{n})\ \ge\ b\sum_{n\ge 0}\P(E_{n})\ =\ b\,\P\left(\sup_{n\ge 0}\,W_{n}>t\right)
\end{equation*}
for all $t\in[1,\infty)$, that is \eqref{Eq.W>delta.lowerbound}.\qed
\end{proof}

\begin{proof}[of Theorem \ref{Th.Kesten+Stigum}]
First note that $\E W\in\{0,1\}$ is already verified by Theorem \ref{Th.W0.zlogz}. The remaining proof is quite long and therefore split into several parts which are proved independently.

\vspace{.2cm}
\noindent\textsc{Assertion 1:} $\P(W>0)>0\ \Rightarrow\ \P(W>0)=\P(\Surv)$.

\begin{Beweis}[Assertion 1]
Let $\tau_{n} = \inf\{m\in\N|\calT_{m}^{*}\ge n\}$. A similar argument as in the proof of Theorem \ref{Th.calT.superMG}$(b)$ yields
\begin{align*}
\P(W=0)\ \le\ \P(W=0)^{n}\ +\ \P(\tau_{n}=\infty)
\end{align*}
for all $n\in\N$. Since $\P(W=0)<1$ and by \cite[Theorem 3.2]{AlsGroettrup:15a}, this further implies
\begin{equation*}
\P(W=0)\ \le\ \lim_{n\to\infty}\P(\tau_{n}=\infty)\ =\ \P\left(\sup_{m\ge 0}\,\calT_{m}^{*}<\infty\right)\ =\ \P(\Ext).
\end{equation*}
and then the assertion, for $\Ext\subseteq\{W=0\}$.
\end{Beweis}

\noindent\textsc{Assertion 2:} $\E\calZ_{1}\log\calZ_{1}<\infty$ and $\E\left(\frac{g_{\Lambda_{0}}'(1)}{\gamma}\log\frac{g_{\Lambda_{0}}'(1)}{\gamma}\right)<0\ \Rightarrow\ \E W=1$.

\begin{Beweis}[Assertion 2]
 To prove the stated result, we use the size-biased tree introduced in Section \ref{Sec.SBP} and show that $\wh W:=\limsup_{n\to\infty}\wh W_{n}$ is almost surely finite. Then $\E W=1$ follows by the dichotomy in Lemma \ref{Lemma.ZshgSpinalTree}$(c)$. 

\vspace{.1cm}
Recalling the notation of the size-biased process, we have the recursive representation
\begin{equation*}
\wh\calZ_{n+1}\ =\ \sum_{\sfv\in\wh\T_{n}}\sum_{j\ge 1}\wh Z_{\sfv j}\ =\ \sum_{j=1}^{\wh T_{n}}\wh Z_{\wh V_{n}j} + \sum_{\sfv\in\wh\T_{n}\setminus\{\wh V_{n}\}}\sum_{j=1}^{N_{\sfv}}\sum_{i=1}^{\wh Z_{\sfv}}X_{i,\sfv}^{(j,N_{\sfv})},\quad n\ge 0.
\end{equation*}
Define the $\sigma$-field
\begin{equation}\label{Eq.defG}
\calG\ :=\ \sigma\left((\wh T_{n})_{n\ge 0}, (\wh X_{n}^{(\bullet,k)})_{n\ge 0, k\ge 1}, (\wh V_{n})_{n\ge 0}\right).
\end{equation}
Using the above recursive formula of $\wh\calZ_{n+1}$, we obtain
\begin{align*}
\E\big(\wh\calZ_{n+1}\,\big|\,\calG\big)\ &=\ \E\left(\sum_{j=1}^{\wh T_{n}}\wh Z_{\wh V_{n}j}\,\bigg|\,\calG\right)\ +\ \E\left(\sum_{\sfv\in\wh\T_{n}\setminus\{\wh V_{n}\}}\sum_{j=1}^{N_{\sfv}}\sum_{i=1}^{\wh Z_{\sfv}}X_{i,\sfv}^{(j,N_{\sfv})}\,\bigg|\,\calG\right)\\
&=\ \E\left(\sum_{j=1}^{\wh T_{n}}\wh Z_{\wh V_{n}j}\,\bigg|\,\calG\right)\ +\ \E\Bigg(\sum_{\sfv\in\wh\T_{n}\setminus\{\wh V_{n}\}}\sum_{i=1}^{\wh Z_{\sfv}}\underbrace{\E\left(\sum_{j=1}^{N_{\sfv}}X_{i,\sfv}^{(j,N_{\sfv})}\right)}_{=\gamma}\,\bigg|\,\calG\Bigg)\\
&\le\ \E\left(\sum_{j=1}^{\wh T_{n}}\wh Z_{\wh V_{n}j}\,\bigg|\,\calG\right)\ +\ \gamma\,\E\left(\wh\calZ_{n}|\calG\right)\\
&\le\ \dots\ \le\ \sum_{k=0}^{n}\gamma^{\,n-k}\,\E\left(\sum_{j=1}^{\wh T_{k}}\wh Z_{\wh V_{k}j}\,\bigg|\,\calG\right)\quad\text{a.s.}
\end{align*}
for all $n\ge 0$. Using the definition of the size-biased variables and the fact that, for fixed $(j,k)$, the $X_{i,\sfv}^{(j,k)}$, $i\in\N,\,\sfv\in\V$, are iid, we further obtain
\begin{align}
\E\big(\wh\calZ_{n+1}\,\big|\,\calG\big)\ &\le\ \sum_{k=0}^{n}\gamma^{\,n-k}\sum_{j=1}^{\wh T_{k}}\left(\wh X_{k}^{(j,\wh T_{k})}+\E\left(\sum_{i=1}^{\Zspine{k}-1}X_{i,\wh V_{k}}^{(j,\wh T_{k})}\,\bigg|\,\calG\right)\right)\nonumber\\
&=\ \sum_{k=0}^{n}\gamma^{\,n-k}\sum_{j=1}^{\wh T_{k}}\bigg(\wh X_{k}^{(j,\wh T_{k})}+\E(\Zspine{k}-1|\calG)\underbrace{\E\left(X^{(j,\wh T_{k})}|\wh T_{k}\right)}_{=\mu_{j,\wh T_{k}}}\bigg)\quad\text{a.s.}\label{Eq.Zhat.bed.G}
\end{align}
Thus, letting $n$ tend to infinity on the right hand side, leads to
\begin{equation}\label{Eq.KesteStigum.Wn.upperbound}
 \E\left(\wh W_{n+1}\,\bigg|\,\calG\right)\ \le\ \underbrace{\sum_{k\ge 0}\frac{1}{\gamma^{\,k}}\sum_{j=1}^{\wh T_{k}}\wh X_{k}^{(j,\wh T_{k})}}_{=:J_{1}}\ +\ \underbrace{\sum_{k\ge 0}\frac{1}{\gamma^{\,k}}\E(\Zspine{k}-1|\calG)\sum_{j=1}^{\wh T_{k}}\mu_{j,\wh T_{k}}}_{=:J_{2}}
\end{equation}
a.s. for all $n\in\N_{0}$. Next is to show that both sums $J_{1}$ and $J_{2}$ are almost surely finite. Recall that $\gamma>1$ as a consequence of $\P(\Ext)<1$ and \cite[Theorem 3.3]{AlsGroettrup:15a}.

\vspace{.2cm}
\textsc{Finiteness of $J_{1}$}: By definition, the $\sum_{j=1}^{\wh T_{k}}\wh X_{k}^{(j,\wh T_{k})}$, $k\ge 0$, are iid with the same law as $\wh\calZ_{1}$. Moreover, $\E\calZ_{1}\log\calZ_{1}<\infty$ is equivalent to $\E\log\wh\calZ_{1}<\infty$ (see Remark \ref{Rem.ZlogZinSizeBiased}) so that, by \eqref{standard iid},
\begin{equation*}
\lim_{k\to\infty}\frac{1}{k}\log\left(\sum_{j=1}^{\wh T_{k}}\wh X_{k}^{(j,\wh T_{k})}\right)\ =\ 0\quad\text{a.s.}
\end{equation*}
which in turn entails that with probability one
\begin{equation*}
\sum_{j=1}^{\wh T_{k}}\wh X_{k}^{(j,\wh T_{k})}\ \le\ \left(\frac{\gamma}{2}\right)^{k}\quad\text{eventually},
\end{equation*}
in particular $J_{1}<\infty$ a.s.

\vspace{.2cm}
\textsc{Finiteness of $J_{2}$}: By Theorem \ref{Th.ABPREI}, $\ZspineprocBPREI$ is a BPREI in iid random environment $(\wh U_{n}, \wh T_{n})_{n\ge 0}$ with immigration sequence $(\wh X_{n}^{(\wh U_{n},\wh T_{n})}-1)_{n\ge 0}$.  Consequently, $\mu_{\wh U_{i},\wh T_{i}}$, $i\in\N_{0}$, is the (random) reproduction mean of parasites in cell $\wh V_{i}$, and thus of the first marginal distribution of individuals in the $i^{th}$ generation of the ABPREI. As previously pointed out, $\E\calZ_{1}\log\calZ_{1}<\infty$ implies $\E\log\wh\calZ_{1}<\infty$, and thus the immigration components satisfy
\begin{equation*}
 \E\log^{+}\left(\wh X_{0}^{(\wh U_{0},\wh T_{0})}-1\right)\ \le\ \E\log\wh\calZ_{1}\ <\ \infty.
\end{equation*}
By the assumptions in the theorem and $\E g_{\Lambda_{0}}'(1) = \gamma/\nu$ (see \cite{AlsGroettrup:15a}), we get
\begin{equation*}
\E\left(g_{\Lambda_{0}}'(1)\log \frac{g_{\Lambda_{0}}'(1)}{\gamma}\right)\ =\ \E g_{\Lambda_{0}}'(1)\log g_{\Lambda_{0}}'(1) - \frac{\gamma}{\nu}\log\gamma\ <\ 0,
\end{equation*}
whence, by an appeal to \eqref{Eq.ABPREIvsABPRE}, 
\begin{equation}\label{Eq.Kesten+Stigum.Nec.negEW}
\E\log\mu_{\wh U_{0},\wh T_{0}}\ =\ \frac{\nu}{\gamma}\E g_{\Lambda_{0}}'(1)\log g_{\Lambda_{0}}'(1)\ <\ \log\gamma.
\end{equation}
Consequently, $\E\log\mu_{\wh U_{0},\wh T_{0}}=\log c$ for some $c\in(1,\gamma)$ and Corollary \ref{Cor.supConv}, applied to $\ZspineprocBPREI$, provides us with
\begin{equation*}
\wh Z_{\infty}\ :=\ \sup_{n\ge 0}\frac{1}{c^{n}}\E\left(\Zspine{n}-1|\calG\right)\ <\ \infty\quad\text{a.s.}
\end{equation*}
We are thus led to a new upper bound for $J_{2}$, namely
\begin{align}
J_{2}\ &\le\ \wh Z_{\infty}\sum_{k\ge 0}\left(\frac{c}{\gamma}\right)^{k}\sum_{j=1}^{\wh T_{k}}\mu_{j,\wh T_{k}}\nonumber\\
&\le\ \wh Z_{\infty}\sum_{k\ge 0}\left(\exp\left[\log\frac{c}{\gamma}+\frac{1}{k}\log^{+}\left(\sum_{j=1}^{\wh T_{k}}\mu_{j,\wh T_{k}}\right)\right]\right)^{k}\quad\text{a.s.}\label{Eq.KesteStigum.(**)finiteness}
\end{align}
Using Jensen's inequality and \eqref{Eq.SizeBiasedCellOffspring}, we obtain
\begin{align*}
\E\log^{+}\left(\sum_{j=1}^{\wh T_{0}}\mu_{j,\wh T_{0}}\right)\ 
&=\ \sum_{k\ge 1}\P(\wh T_{0}=k)\log^{+}\E\left(\sum_{j=1}^{k}X^{(j,k)}\right)\\
&=\ \sum_{k\ge 1}\frac{p_{k}}{\gamma}\,\E\left(\sum_{j=1}^{k}X^{(j,k)}\right)\log^{+}\E\left(\sum_{j=1}^{k}X^{(j,k)}\right)\\
&\le\ \frac{1}{\gamma}\sum_{k\ge 1}p_{k}\,\E\left(\sum_{j=1}^{k}X^{(j,k)}\log^{+}\sum_{j=1}^{k}X^{(j,k)}\right)\\
&=\ \frac{1}{\gamma}\,\E\calZ_{1}\log\calZ_{1}\ <\ \infty,
\end{align*}
and since the $\sum_{j=1}^{\wh T_{k}}\mu_{j,\wh T_{k}}$, $k\ge 0$, are iid, \eqref{standard iid} yields
\begin{equation*}
\limsup_{n\to\infty}\frac{1}{k}\log^{+}\left(\sum_{j=1}^{\wh T_{0}}\mu_{j,\wh T_{0}}\right)\ =\ 0\quad\text{a.s.}
\end{equation*}
Hence, $J_{2}<\infty$ a.s. follows when using this fact in \eqref{Eq.KesteStigum.(**)finiteness}.

\vspace{.2cm}
Having verified that $J_{1}$ and $J_{2}$ are a.s. finite, inequality \eqref{Eq.KesteStigum.Wn.upperbound} gives
\begin{equation*}
\sup_{n\ge 0}\E(\wh W_{n}|\calG)\ <\ \infty\quad\text{a.s.}
\end{equation*}
while Fatou's lemma ensures almost sure finiteness of $\liminf_{n\to\infty}\wh W_{n}$, i.e. 
\begin{equation*}
\P(\liminf_{n\to\infty}\wh W_{n}<\infty)\ =\ \wh Q(\liminf_{n\to\infty}w_{n}<\infty)\ =\ 1,
\end{equation*}
where $\wh Q=\P(\BThat\in\cdot)$ and $\wh W_{n}=w_{n}\circ\wh\BT$. It remains to show that $(w_{n})_{n\ge 0}$ converges $\wh Q$-a.s., because then $\wh W=\liminf_{n\to\infty}\wh W_{n}$ and $\wh W$ is almost surely finite, which completes the proof of the theorem.

\vspace{.1cm}
We show that $(1/w_{n})_{n\ge 0}$ is a $\wh Q$-supermartingale with respect to the filtration $(\calS_{n})_{n\ge 0}$ defined in Section \ref{Sec.SBP} to which is adapted by definition. For each $n\ge 0$, we have
\begin{equation*}
\wh Q(w_{n}=0)\ =\ \int_{\{w_{n}=0\}} w_{n}\ dQ\ =\ 0
\end{equation*}
by Lemma \ref{Lemma.ZshgSpinalTree}$(b)$. Let $\E_P$ denote expectation with respect to a probability measure $P$. By using Lemma \ref{Lemma.ZshgSpinalTree} and Remark \ref{Rem.ZlogZinSizeBiased}, we infer for any $A\in\calS_{n}\subseteq\calS_{n+1}$
\begin{align*}
\int_{A}\E_{\wh Q}\left(\frac{1}{w_{n+1}}\big|\calS_{n}\right)\ d\wh Q\ 
&=\ \int_A\frac{1}{w_{n+1}}d\wh Q\ =\ \E_{\wh Q}\left(\frac{1}{w_{n+1}}\1_{\left\{A\cap\{w_{n+1}>0\}\right\}}\right)\\
&=\ \E_{Q}\left(\frac{1}{w_{n+1}}w_{n+1}\1_{\left\{A\cap\{w_{n+1}>0\}\right\}}\right)\\
&=\ Q\left(A\cap\{w_{n+1}>0\}\right)\\
&\le\ Q\left(A\cap\{w_{n}>0\}\right)\\
&=\ \int_A\frac{1}{w_{n}}\ d\wh Q,
\end{align*}
where the last equality follows by making the previous calculations backwards. We have thus verified that $(1/w_{n})_{n\ge 0}$ is indeed a $\wh Q$-supermartingale with $\E_{\wh Q}(1/w_{n})\le\E_{\wh Q}(1/w_{0})=1$. It hence converges $\wh Q$-a.s. by the martingale convergence theorem which completes the proof of the assertion.
\end{Beweis}

\noindent\textsc{Assertion 3:} $\E\calZ_{1}\log\calZ_{1}=\infty$ or $\E\left(\frac{g_{\Lambda_{0}}'(1)}{\gamma}\log \frac{g_{\Lambda_{0}}'(1)}{\gamma}\right)\ge 0\ \Rightarrow\ \P(W=0)=1$

\begin{Beweis}[Assertion 3]
First note, that our basic assumptions \eqref{As4} and \eqref{As5} imply
\begin{equation}\label{Eq.g<1}
\P(g^\prime_{\Lambda_{0}}(1)\in\{\gamma,0\})<1,
\end{equation}
for otherwise $\mu_{j,k}\in\{0,\gamma\}$ for all $1\le j\le k<\infty$ with $p_{k}>0$ and so
\begin{equation*}
\gamma\ =\ \sum_{k\ge 0} p_{k}\sum_{j=1}^{k}\mu_{j,k}\ =\ \gamma\underbrace{\sum_{k\ge 0}p_{k}\,\#\,\{1\le j\le k:\P(X^{(j,k)}>0)>0\}}_{=:c=1}.
\end{equation*}
Since $c$ denotes the mean number of cells that are able to host parasites, we infer $\E_z(\calT_{1}^*)\le c=1$ for all $z\in\N$. Hence,
\begin{equation*}
 \E\calT^*_{n+1}\ =\ \E\left(\sum_{\sfv\in\T_{n}^{*}}\sum_{z=1}^\infty\E_z(\calT_{1}^*)\1_{\{Z_{\sfv}=z\}}\right)\ \le\ \E\calT_{n}^{*}\ \le\ \dots\ \le\ 1.
\end{equation*}
But this contradicts \cite[Theorem 3.3]{AlsGroettrup:15a} and so \eqref{Eq.g<1} must hold.

\vspace{.1cm}
Using again the size-biased tree defined in Section \ref{Sec.SBP}, we show that $\P(\wh W=\infty)=1$ for $\wh W:=\limsup_{n\to\infty}\wh W_{n}$. Then Lemma \ref{Lemma.ZshgSpinalTree}$(c)$ gives $\P(W=0)=1$.

\vspace{.1cm}
First, note that
\begin{equation*}
 \wh W_{n}\ =\ \frac{1}{\gamma^{\,n}}\sum_{\sfv\in\wh\T_{n}}\wh Z_{\sfv}\ \ge\ \frac{1}{\gamma^{\,n}}\sum_{j=1}^{\wh T_{n-1}}\wh Z_{\wh V_{n-1} u}\ \ge\ \frac{1}{\gamma^{\,n}}\sum_{j=1}^{\wh T_{n-1}}\wh X^{(j,\wh T_{n-1})}_{n-1}\quad\text{a.s.}
\end{equation*}
for $n\in\N$.
Since $\E\calZ_{1}\log\calZ_{1}=\infty$ gives $\E\log\wh\calZ_{1}=\infty$ by Remark \ref{Rem.ZlogZinSizeBiased}, and the random sums $\sum_{j=1}^{\wh T_{n-1}}\wh X^{(j,\wh T_{n-1})}_{n-1}$, $n\in\N$, are independent and identically distributed as $\wh\calZ_{1}$, we infer
\begin{align*}
 \limsup_{n\to\infty}\,\wh W_{n}\ &\ge\ \limsup_{n\to\infty}\frac{1}{\gamma^{\,n}}\sum_{j=1}^{\wh T_{n-1}}\wh X^{(j,\wh T_{n-1})}_{n-1}\\
&=\limsup_{n\to\infty}\ \exp\left(\frac1n\log\sum_{j=1}^{\wh T_{n-1}}\wh X^{(j,\wh T_{n-1})}_{n-1}-\log\gamma\right)^{n}\ =\ \infty\quad\text{a.s.}
\end{align*}
by another appeal to \eqref{standard iid}. This proves the assertion in the case when $\E\calZ_{1}\log\calZ_{1}=\infty$. 

\vspace{.2cm}
Suppose now $\E\calZ_{1}\log\calZ_{1}<\infty$. Once again, by the definition of $\wh W_{n}$, we get
\begin{equation}\label{Eq.Kesten+Stigum.Suff.lowerbound.W}
\wh W_{n}\ =\ \frac{1}{\gamma^{\,n}}\sum_{\sfv\in\wh\T_{n}}\wh Z_{\sfv}\ \ge\ \frac{1}{\gamma^{\,n}}\Zspine{n}\ \ge\ \frac{1}{\gamma^{\,n}}(\Zspine{n}-1)\quad\text{a.s.}
\end{equation}
for $n\in\N_{0}$.
As stated earlier, $\ZspineprocBPREI$ forms a BPREI in iid random environment $(\wh U_{n}, \wh T_{n})_{n\ge 0}$ and with immigration sequence $(\wh X_{n}^{(\wh U_{n},\wh T_{n})}-1)_{n\ge 0}$. 
The assumption $\P(g^\prime_{\Lambda_{0}}(1)\in\{\gamma,0\})<1$ implies $\mu_{j,k}\ne\gamma$ for some $(j,k)$, $j\le k$, with $p_{k}>0$ and $\P(X^{(j,k)}>0)>0$, thus 
\begin{equation}\label{Eq.Kesten+Stigum.Suff.muNeqGamma}
\P(\mu_{\wh U_{0},\wh T_{0}}\ne\gamma)>0.
\end{equation}
Moreover, $\E\calZ_{1}\log\calZ_{1}<\infty$ implies
\begin{equation*}
\E\log^{+}\left(\wh X_{0}^{(\wh U_{0},\wh T_{0})}-1\right)\ <\ \infty
\end{equation*}
as pointed out before. By adapting the argument in \eqref{Eq.Kesten+Stigum.Nec.negEW}, we find
\begin{equation}\label{Eq.Kesten+Stigum.Suff.posEW}
\E\log\mu_{\wh U_{0},\wh T_{0}}\ =\ \frac{\nu}{\gamma}\E g_{\Lambda_{0}}'(1)\log g_{\Lambda_{0}}'(1)\ \ge\ \log\gamma\ >\ 0.
\end{equation}
Hence, by Theorem \ref{Th.BPREI.SC}$(a)$, there exists an almost surely finite random variable $Z_{\infty}$ such that
\begin{equation}\label{Eq.Kesten+Stigum.Suff.convZinf}
\lim_{n\to\infty}\ \frac{\Zspine{n}-1}{\prod_{i=0}^{n-1}\mu_{\wh U_{i},\wh T_{i}}}\ =\ Z_{\infty}\quad\text{a.s.}
\end{equation}
Theorem \ref{Th.BPREI.SC}$(a)$ further ensures that $Z_{\infty}$ is a.s. positive, because
\begin{align*}
 \E\left(\frac{X^{(\wh U_{0},\wh T_{0})}}{\mu_{\wh U_{0},\wh T_{0}}}\log^{+}X^{(\wh U_{0},\wh T_{0})}\right)~
&=\ \sum_{1\le j\le k<\infty}\P\left(\wh U_{0}=j,\wh T_{0}=k\right)\E\left(\frac{X^{(j,k)}}{\mu_{j,k}}\log^{+}X^{(j,k)}\right)\\
&=\ \frac{1}{\gamma}\sum_{1\le j\le k<\infty}p_{k}\,\E\left(X^{(j,k)}\log^{+}X^{(j,k)}\right)\\
&\le\ \frac{1}{\gamma}\sum_{1\le j\le k<\infty}p_{k}\,\E\left(X^{(j,k)}\log^{+}\sum_{j=1}^{k}X^{(j,k)}\right)\\
&=\ \frac{1}{\gamma}\E\calZ_{1}\log^{+}\calZ_{1}\ <\ \infty,
\end{align*}
where \eqref{Eq.SizeBiasedDistr} has been utilized for the second equation. From \eqref{Eq.Kesten+Stigum.Suff.lowerbound.W}, \eqref{Eq.Kesten+Stigum.Suff.muNeqGamma}, \eqref{Eq.Kesten+Stigum.Suff.posEW}, \eqref{Eq.Kesten+Stigum.Suff.convZinf} and the fact that the $\mu_{\wh U_{i},\wh T_{i}}$, $i\in\N_{0}$, are iid, we finally obtain
\begin{align*}
\wh W\ =\ \limsup_{n\to\infty}\wh W_{n}\ &\ge\ Z_{\infty}\limsup_{n\to\infty}\frac{\prod_{i=0}^{n-1}\mu_{\wh U_{i},\wh T_{i}}}{\gamma^{\,n}}\\
&=\ Z_{\infty}\exp\left(\limsup_{n\to\infty}\sum_{i=0}^{n-1}\log\left(\frac{\mu_{\wh U_{i},\wh T_{i}}}{\gamma}\right)\right)\ =\ \infty\quad\text{a.s.}
\end{align*}
by the law of large numbers or, equivalently (Lemma \ref{Lemma.ZshgSpinalTree}(c)), $W=0$ a.s.\end{Beweis}

This completes the proof of Theorem \ref{Th.Kesten+Stigum}.\qed
\end{proof}

\begin{proof}[of Theorem \ref{Th.Heyde+Seneta.approach}]
Since $W_{n}$ converges a.s. to a finite random variable, it follows immediately that $\limsup_{n\to\infty}W_{n}^{1/n}\le1$ a.s.

\vspace{.1cm}
To derive the lower bound, we distinguish two cases and use the truncation argument given in \cite{BigginsSouza:93}. Recall from \eqref{Eq.g<1} in the previous proof that \eqref{As4} and \eqref{As5} imply $P(g^\prime_{\Lambda_{0}}(1)\in\{\gamma,0\})<1$.

\vspace{.2cm}
\textsc{Case I:} Let $N$ be bounded, i.e. $N\le c$ a.s. for some $c\in (0,\infty)$. For $a>0$ and $1\le j\le k\le c$, we define
\begin{equation*}
 X^{(j,k)}(a):=X^{(j,k)}\1_{\{X^{(j,k)}\le a\}}
\end{equation*}
and let $(\calZ_{n}(a))_{n\ge0}$ denote the associated process of parasites and $(\calT_{n}^{*}(a))_{n\ge0}$ the process of contaminated cells having the truncated reproductions laws. Put further $\gamma(a):=\E\calZ_{1}(a)$ and let $g_{\Lambda_{0},a}(s)$ and $W_{n}(a)$ have the obvious meaning. Since $\calT_{n}^{*}(a)\uparrow\calT_{n}^{*}$ as $a\to\infty$ for each $n\in\N$, we see that $\P_2(\calT_{1}^{*}(a)\ge 2)>0$ as well as $\sup_{n\ge0}\E\calT_{n}^{*}(a)>1$ for sufficiently large $a>0$, which entails $\P(\calZ_{n}(a)\to 0)<1$ for such $a$ by \cite[Theorem 3.3]{AlsGroettrup:15a}. Moreover,
\begin{align*}
\E g_{\Lambda_{0},a}'(1)&\log\frac{g_{\Lambda_{0},a}'(1)}{\gamma(a)}\\
&=\ \E g_{\Lambda_{0},a}'(1)\log g_{\Lambda_{0},a}'(1)\ -\ \E g_{\Lambda_{0},a}'(1)\log\gamma(a)\\
&\le\ \E g_{\Lambda_{0}}'(1)\log g_{\Lambda_{0}}'(1)\ -\ \E g_{\Lambda_{0},a}'(1)\log\gamma(a)\\
&\searrow\ \E g_{\Lambda_{0}}'(1)\log g_{\Lambda_{0}}'(1)\ -\ \E g_{\Lambda_{0}}'(1)\log\gamma\quad\text{as $a\to\infty$}\\
&=\ \E g_{\Lambda_{0}}'(1)\log \frac{g_{\Lambda_{0}}'(1)}{\gamma}\ <\ 0,
\end{align*}
for $\gamma(a)$ increases in $a$. Therefore we can find $a_{0}>0$ such that
\begin{equation*}
\E g_{\Lambda_{0},a}'(1)\log \frac{g_{\Lambda_{0},a}'(1)}{\gamma(a)}\ <\ 0.
\end{equation*}
for all $a\ge a_{0}$. Since $\E\calZ_{1}(a)\log\calZ_{1}(a) \le ac\log ac$, Theorem \ref{Th.Kesten+Stigum} implies the existence of a finite random variable $W(a)$ such that $W_{n}(a)\to W(a)$ in $L^1$ as $n\to\infty$. In particular, $\P(W(a)>0)>0$.

Now fix any $\eps>0$ and $a\ge a_{0}$ such that $\gamma(a)\ge (1-\eps)\gamma$. Then
\begin{equation*}
\E\calZ_{n}(a)\ =\ \gamma(a)^{n}\ge(1-\eps)^{n}\gamma^{\,n} 
\end{equation*}
for all $n\ge 0$. Let $(\calZ_{n,k}(a))_{n\ge0}$ be the number of parasites process, when parasites produce offspring according to the original reproduction laws up to generation $k$ and with the truncated laws from generation $k+1$ onwards. By the previously established lower bound of the means, this yields
\begin{equation*}
 \E\calZ_{n,k}(a)\ =\ \gamma^{\,k}\E\calZ_{n-k}(a)\ \ge\ (1-\eps)^{n}\gamma^{\,n}
\end{equation*}
for all $k,n\ge 0$ with $k\le n$. Moreover, we find that
\begin{align*}
\frac{\calZ_{n}}{(1-\eps)^{n}\gamma^{\,n}}\ \ge\ \frac{\calZ_{n,k}(a)}{(1-\eps)^{n}\gamma^{\,n}}\ \ge\ \frac{\calZ_{n,k}(a)}{\E\calZ_{n,k}(a)}\ \ge\ \frac{1}{\gamma^{\,k}}\sum_{\sfv\in\T_{k}^{*}}\frac{\calZ_{n-k}^{(\sfv)}(a)}{\E\calZ_{n-k}(a)}\quad\text{a.s.},
\end{align*}
where $\calZ_{n-k}^{(\sfv)}(a)$, $v\in\T_{n}^{*}$, are iid random variables with the same law as $\calZ_{n-k}(a)$ when starting with a single parasite. Because of our choice of $a$, taking the limit in the above inequality yields
\begin{equation*}
 \liminf_{n\to\infty}\frac{\calZ_{n}}{(1-\eps)^{n}\gamma^{\,n}}\ \ge\ \liminf_{n\to\infty}\frac{1}{\gamma^{\,k}}\sum_{\sfv\in\T_{k}^{*}}\frac{\calZ_{n-k}^{(\sfv)}(a)}{\E\calZ_{n-k}(a)}\ =\ \frac{1}{\gamma^{\,k}}\sum_{\sfv\in\T_{k}^{*}}W^{(\sfv)}(a),
\end{equation*}
where $W^{(\sfv)}(a)$, $v\in\T_{n}^{*}$, are independent and distributed as $W(a)$. Recalling that $\calF_k$ is the $\sigma$-algebra of the $k$-past, we get from this inequality
\begin{align*}
 \P\left(\liminf_{n\to\infty}\frac{\calZ_{n}}{(1-\eps)^{n}\gamma^{\,n}}>0\ \bigg|\ \calF_k\right)~&\ge\ \P\left(\frac{1}{\gamma^{\,k}}\sum_{\sfv\in\T_{k}^{*}}W^{(\sfv)}(a)>0\ \bigg|\ \calF_k\right)\\
&=~1-\P(W(a)=0)^{\calT_{k}^{*}}\quad\text{a.s.}
\end{align*}
Now use $\P(W(a)>0)>0$ to obtain upon letting $k$ tend to infinity
\begin{equation*}
\Surv\ =\ \left\{\calT_{n}^{*}\to\infty\right\}\ \subseteq\ \left\{\liminf_{n\to\infty}\frac{\calZ_{n}}{(1-\eps)^{n}\gamma^{\,n}}>0\right\}\quad\text{a.s.},
\end{equation*}
and then finally
\begin{equation*}
\liminf_{n\to\infty}W_{n}^{1/n}\ \ge\ 1-\eps\quad\text{a.s.}
\end{equation*}
on the survival set $\Surv$. This proves the theorem in the first case.

\vspace{.2cm}
\textsc{Case II:} Let $N$ be unbounded. We use truncation to reduce to bounded $N$ and make use of the results just shown for that case. For $b>0$, put
\begin{equation*}
N(b):=N\,\1_{\{N\le b\}}.
\end{equation*}
Let $(\calZ_{n}(b))_{n\ge 0}$ be the associated number of parasites process and $(\calT_{n}^{*}(b))_{n\ge 0}$ the process of contaminated cells having the truncated reproductions law for the cells. Further, let $\gamma(b)=\E\calZ_{1}(b)$, $\nu(b)=\E\calT_{1}(b)$ and $g_{\Lambda_{0},b}$ be the generating function of the ABPRE of the truncated BwBP. For the truncated process, we have
\begin{equation*}
\E g_{\Lambda_{0},b}'(1)\log g_{\Lambda_{0},b}'(1)\ =\ \sum_{1\le j\le k\le b}\frac{p_{k}}{\nu(b)}\mu_{j,k}\log\mu_{j,k}\ \xrightarrow{b\to\infty}\ \E g_{\Lambda_{0}}'(1)\log g_{\Lambda_{0}}'(1)
\end{equation*}
as well as
\begin{equation*}
\E g_{\Lambda_{0},b}'(1)\log\gamma(b)\ =\ \frac{\nu}{\nu(b)}\frac{\gamma(b)}{\nu}\log\gamma(b)\ \xrightarrow{b\to\infty}\ \frac{\gamma}{\nu}\log\gamma\ =\ \E g_{\Lambda_{0}}'(1)\log\gamma\ \in(0,\infty).
\end{equation*}
Putting these two equations together and using $\gamma(b)\uparrow\gamma$ as $b\to\infty$, we obtain
\begin{align*}
\E g_{\Lambda_{0},b}'(1)\log \frac{g_{\Lambda_{0},b}'(1)}{\gamma(b)}\ 
&=\ \E g_{\Lambda_{0},b}'(1)\log g_{\Lambda_{0},b}'(1)\ -\ \E g_{\Lambda_{0},b}'(1)\log\gamma(b)\\
&\xrightarrow{b\to\infty}\ \E g_{\Lambda_{0}}'(1)\log g_{\Lambda_{0}}'(1)\ -\ \E g_{\Lambda_{0}}'(1)\log\gamma\\
&=\ \E g_{\Lambda_{0}}'(1)\log \frac{g_{\Lambda_{0}}'(1)}{\gamma}\ <\ 0.
\end{align*}
Hence, for each $\eps>0$, we can fix as in Case I some $b>0$ such that 
$$ \gamma(b)\ge(1-\eps)\gamma,\quad\P_{2}(\calT_{1}^{*}(b)\ge 2)>0,\quad\P(\calZ_{n}(b)\to 0)<1 $$ and $\E g_{\Lambda_{0},b}'(1)\log\frac{g_{\Lambda_{0},b}'(1)}{\gamma(b)}<0$. In other words, all conditions of \textsc{Case I} are fulfilled, and so
\begin{equation*}
\liminf_{n\to\infty}\,W_{n}^{1/n}\ \ge\ (1-\eps)\,\liminf_{n\to\infty}\left(\frac{\calZ_{n}(b)}{\gamma(b)^{n}}\right)^{1/n}\ \ge\ 1-\eps\quad\text{a.s.}
\end{equation*}
This completes the proof.\qed
\end{proof}

\section*{Glossary}\label{sec:glossary}
\begin{description}[xxxxxxxxxx]
\item[$(p_{k})_{k\ge 0}$] offspring distribution of the cell population
\item[$\T$] cell tree in Ulam-Harris labeling
\item[$\T_{n}$] subpopulation at time (generation) $n$ $[=\{\sfv\in\T:|\sfv|=n\}]$
\item[$N_{\sfv}$] number of daughter cells of cell $\sfv\in\T$\item[$\calT_{n}$] cell population size at time $n$ $[=\#\T_{n}=\sum_{\sf\sfv\in\V_{n}}N_{\sfv}]$
\item[$Z_{\sfv}$] number of parasites in cell $\sfv$
\item[$\BT$] branching withing branching tree $[=(Z_{\sfv})_{\sfv\in\V}]$
\item[$\calZ_{n}$] number of parasites at time $n$ $[=\sum_{\sf\sfv\in\T_{n}}Z_{\sfv}]$
\item[$\T_{n}^{*}$] population of contaminated cells at time $n$ $[\{\sf\sfv\in\T_{n}:Z_{\sfv}>0\}]$
\item[$\calT_{n}^{*}$] number of contaminated cells at time $n$ $[=\#\T_{n}^{*}]$
\item[$X^{(\bullet,k)}_{i,\sfv}$] given that the cell $\sfv$ has $k$ daughter cells $\sfv 1,...,\sfv k$, the $j^{\,th}$ component $X^{(j,k)}_{i,\sfv}$ of this $\N_{0}^{k}$-valued random vector gives the number of offspring of the $i^{\,th}$ parasite in $\sfv$ which is shared into daughter cell $\sfv j$.
\item[$X^{(\bullet,k)}$] generic copy of the $X^{(\bullet,k)}_{i,\sfv}$, $i\in\N,\,\sf\sfv\in\V$ with components $X^{(j,k)}$
\item[$\mu_{j,k}$] $=\E X^{(j,k)}$
\item[$\gamma$] mean number of offspring per parasite $[=\E\calZ_{1}=\sum_{k\ge 1}p_{k}\sum_{j=1}^{k}\mu_{j,k}]$
\item[$\nu$] mean number of daughter cells per cell $[=\E N=\sum_{k\ge 1}p_{k}]$
\item[$W_{n}$] normalized number of parasites at time $n$ $[=\gamma^{-n}\calZ_{n}=w_{n}\circ\BT\,]$\vspace{.05cm}
\item[$(\wh V_{n})_{n\ge 0}$] infinite random cell line in $\V$ starting at $\wh V_{0}=\varnothing$, where $\wh V_{n}$ denotes the cell containing the spinal parasite picked at time $n$.
\item[$\wh N_{\sfv}$] number of daughter cells of cell $\sfv$ in the size-biased tree $\wh\T$
\item[$\wh Z_{\sfv}$] number of parasites in cell $\sfv$ in the size-biased tree $\wh T$
\item[$\wh T_{n}$] number of daughter cells of the spinal cell $\wh V_{n}$\vspace{.05cm}
\item[$\wh\BT$] size-biased branching withing branching tree $[=(\wh Z_{\sfv})_{\sfv\in\V}]$\vspace{.05cm}
\item[$\wh\calZ_{n}$] number of parasites at time $n$ under size-biasing $[=\sum_{\sf\sfv\in\wh\T_{n}}\wh Z_{\sfv}]$\vspace{.05cm}
\item[$\wh W_{n}$] normalized number of parasites at time $n$ $[=\gamma^{-n}\wh\calZ_{n}=w_{n}\circ\wh\BT\,]$\vspace{.05cm}
\item[$(\wh Z_{n}')_{n\ge 0}$] associated branching process in iid random environment $\Delta=(\Delta_{n})_{n\ge 0}$ with immigration (ABPREI) and a copy of $(\wh Z_{\wh V_{n}}-1)_{n\ge 0}$.
\end{description}
\bibliographystyle{abbrv}
\bibliography{StoPro}

\end{document}